\def\arxiv#1{%
	\href{http://arxiv.org/abs/#1}{arXiv:#1}}%
\DeclareMathOperator{\Aut}{Aut}
\DeclareMathOperator{\GL}{GL}
\DeclareMathOperator{\Id}{Id}
\DeclareMathOperator{\Irr}{Irr}
\DeclareMathOperator{\PSL}{PSL}
\DeclareMathOperator{\Sing}{Sing}
\DeclareMathOperator{\SL}{SL}
\DeclareMathOperator{\ord}{ord}
\begin{document}

\newtheorem{theorem}{Theorem}[section]
\newtheorem{lemma}[theorem]{Lemma}
\newtheorem{proposition}[theorem]{Proposition}
\newtheorem{corollary}[theorem]{Corollary}
\newtheorem*{theorem*}{Theorem}

\theoremstyle{definition}
\newtheorem{remark}[theorem]{Remark}
\newtheorem{definition}[theorem]{Definition}
\newtheorem{example}[theorem]{Example}
\newtheorem{notation}[theorem]{Notation}

\newtheorem*{problem*}{Problem}
\newtheorem*{question*}{Question}
\newtheorem*{acknowledgements}{Acknowledgements}

\numberwithin{equation}{section}

\renewcommand{\subjclassname}{%
	\textup{2020} Mathematics Subject Classification}

\title[On the classification of regular product-quotient surfaces]{On the classification of product-quotient surfaces with $q=0$, $p_g=3$ and their canonical map}

\author{Federico Fallucca}
\address{Dipartimento di Matematica e Applicazioni\\
	Universit\`a degli Studi di Milano-Bicocca\\
	Via Roberto Cozzi 55\\
	20126 Milano\\ Italy.}
\email{federico.fallucca@unimib.it}

\keywords{surfaces of general type, product-quotient, canonical maps}
\subjclass[2020]{14J29}

\begin{abstract}
	In this work we	present new results to produce an algorithm that returns, for any fixed pair of natural integers $K^2$ and $\chi$, all regular surfaces $S$ of general type with self-intersection $K_S^2=K^2$ and Euler characteristic $\chi(\mathcal O_S)=\chi$, that are product-quotient surfaces. 
\\
The key result we obtain is an algebraic characterization of all families of regular product-quotients surfaces, up to isomorphism, arising from a pair of $G$-coverings of $\mathbb P^1$.
\\
As a consequence of our work, we provide a classification of all regular product-quotient surfaces of general type with $23\leq K^2\leq 32$ and $\chi(\mathcal O_S)=4$. 
\\
Furthermore, we study their canonical map and present several new examples of surfaces of general type with a high degree of the canonical map. 
	\end{abstract}

\maketitle

\tableofcontents

\section*{Introduction}\label{sec: Introduction}

The history of the canonical map of surfaces of general type is long more than $45$ years and it has been recently revived after the beautiful survey \cite{surveyMLP}, where the authors provide an overview of the current state of knowledge on the topic, also outlining a series of still-open questions.

In $1978$, Persson proved that the degree of the canonical map of surfaces of general type is bounded from above by $36$, see \cite{Ulf}. Furthermore, it is known since \cite{beauville} that if the degree is more than $27$, then $q=0$ and $p_g=3$. 
\\
For a long time, the only examples with a high degree of the canonical map were the surfaces of Persson \cite{Ulf} with degree $16$ and Tan \cite{Tan03}, with degree $12$, proving how much challenging can be the construction of new examples. Recently it has been proved that the bound given by Persson is sharp, see \cite{yeung}, \cite{yeung2}, \cite{carlos2}. As a consequence of this, M. Mendes Lopes and R. Pardini revived the topic of the degree of the canonical map and posed in their survey, among other things, the natural question \cite[Question 5.2]{surveyMLP} if all the integers between $2$ and $36$ can be the degree of the canonical map of some surfaces of general type having $q=0$ and $p_g=3$. 
\\
We also noteworthy, as mentioned in \cite{beauville}, that the degree of the canonical map is bounded from above by $K^2_S$, so that minimal surfaces with a high degree of the canonical map not only have $q=0$ and $p_g=3$ but also high values of $K^2_S$. 

In this paper, we construct product-quotient surfaces of general type with $q=0$, $p_g=3$, and $23 \leq K^2_S\leq 32 $ with the ultimate goal to compute the degree of their canonical map and give new examples. 
\\
We remind that $K^2_S=32$ is the highest possible value for product-quotient surfaces with $q=0$ and $p_g=3$, see Theorem \ref{thm: invariants_PQ}. 

We consider product-quotient surfaces as they have proven to be highly useful tools in investigating unresolved conjectures in Algebraic Geometry. As a series of examples, that only deal with regular surfaces, we mention the rigid but not infinitesimally rigid manifolds \cite{rigidity} constructed by Bauer and Pignatelli that gave a negative answer to a question of Kodaira and Morrow \cite[p. 45]{KM71}, the families of surfaces with $p_g=q=0$ constructed in \cite{BP12} realizing $13$ new topological types and for which Bloch's conjecture \cite{Blo75} holds, and the series of papers  \cite{BP16}, \cite{BP12}, \cite{BCGP12}, \cite{BCG08}, \cite{BC04} providing a classification of minimal product-quotient surfaces of general type with  $p_g=q=0$ to give a partial answer to a still-open problem posed by Mumford in 1980, see \cite[p. 551]{BCG08}.

As a first result of this paper, we refine the MAGMA code of \cite{BP12} and we present a new version of it which, taking as input a pair of natural integers $K^2$ and $\chi$, returns all regular surfaces $S$ of general type with self-intersection $K^2_S=K^2$ and Euler characteristic $\chi(\mathcal O_S)=\chi$, that are product-quotient surfaces. 

Although the original script is relatively easy to be adapted to any fixed value of $\chi$ and not only for $\chi=1$ as in \cite{BP12}, it still presents computational time problems as the value of $\chi$ increases. We make the code more performant
by giving new improvements. 

A first novelty is the implementation of the database and the script \textit{FindGenerators} developed in \cite{CGP23}. Such database contains one spherical system of generators of a finite group $G$ for each family of pairwise topologically equivalent $G$-coverings $C$ of $\mathbb P^1$, where the genus of $C$ is $g(C)\leq 27$. We use these tools from \cite{CGP23} to speed up \textbf{Step 3} in Subsection \ref{subsec: descr_implem_class_algorithm} as well. 
\\
The second main novelty is given from the following new result: 
\begin{theorem}\label{thm: short_version_main_thm}
	Let $V_1, V_2$ be two spherical systems of generators of a finite group $G$. Assume that the associated topological types of G-coverings of $\mathbb P^1$ are different.
	The families of product-quotient surfaces associated to this pair of topological types of $G$-coverings is in natural bijection with the set of double cosets 
	\[
	  \mathcal B\Aut(G, V_1)\backslash \Aut(G)/  \mathcal B\Aut(G, V_2).
	\]
\end{theorem}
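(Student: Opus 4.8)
The plan is to reduce the geometric classification of families to a purely group-theoretic bookkeeping of the two spherical systems, and then to read off the double coset structure. Recall first that a product-quotient surface built from the pair $(V_1,V_2)$ is the quotient $(C_1\times C_2)/G$ for a diagonal $G$-action, where $C_i\to\mathbb P^1$ is the $G$-cover determined by a spherical system in the topological type of $V_i$. The surface depends on the two covers only through the pair of spherical systems $(W_1,W_2)$ up to three kinds of move: an independent Hurwitz (braid) move on each $W_i$, together with a change of the branch locus, which keeps each factor inside its topological type and produces a deformation-equivalent surface; and a simultaneous application of a single $\phi\in\Aut(G)$ to both entries, $(W_1,W_2)\mapsto(\phi(W_1),\phi(W_2))$, which merely relabels the acting group and hence leaves the $G$-orbits on $C_1\times C_2$, and so the surface itself, unchanged. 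I would cite the earlier structural results on product-quotient surfaces for the statement that families correspond exactly to orbits under these moves; here the hypothesis that the two topological types are \emph{distinct} is essential, as it forbids the extra involution that would swap the two factors. Throughout I will use the elementary but crucial fact that the $\Aut(G)$-action commutes with Hurwitz moves, so that $\phi$ carries each braid orbit bijectively onto another.

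Next I would define the comparison map. Given any family, normalize a representative $(W_1,W_2)$ as follows. Since $W_1$ lies in the topological type of $V_1$, a Hurwitz move brings it to $\phi_1(V_1)$ for some $\phi_1\in\Aut(G)$, and likewise $W_2$ to $\phi_2(V_2)$; applying the simultaneous automorphism $\phi_1^{-1}$ we may then assume $W_1\sim V_1$ (braid equivalence) while $W_2\sim\phi(V_2)$ with $\phi:=\phi_1^{-1}\phi_2\in\Aut(G)$. Assigning to the family the class of $\phi$ defines a map to $\Aut(G)$, which I claim descends to a bijection onto the double coset space. Surjectivity is immediate, since every $\phi\in\Aut(G)$ is realized by the normalized pair $(V_1,\phi(V_2))$.

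Finally I would pin down the ambiguity in $\phi$, which is exactly the source of the two cosets. The residual freedom coming from the simultaneous $\Aut(G)$-move consists of the automorphisms $\beta$ that preserve the braid class of $V_1$, that is $\beta\in\mathcal B\Aut(G,V_1)$; such a $\beta$ replaces $\phi$ by $\beta\phi$, giving the left action of $\mathcal B\Aut(G,V_1)$. The residual freedom from Hurwitz moves on the second factor means that $W_2$ is pinned down only up to its braid class; writing $W_2\sim\phi(V_2)$ and using the commutation of $\Aut(G)$ with braiding, one checks that $\phi(V_2)\sim\phi'(V_2)$ holds precisely when $\phi^{-1}\phi'\in\mathcal B\Aut(G,V_2)$, which yields the right action of $\mathcal B\Aut(G,V_2)$. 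Hence the class of $\phi$ is well defined and injective modulo
\[
  \mathcal B\Aut(G,V_1)\backslash\Aut(G)/\mathcal B\Aut(G,V_2),
\]
completing the bijection. The main obstacle I expect is not any single step above but the careful matching of the geometric equivalence (deformation of families) with the combinatorial equivalence on pairs of spherical systems: one must verify that no hidden identification has been overlooked---in particular that inner automorphisms already lie in each $\mathcal B\Aut(G,V_i)$, so that they do not enlarge the double coset, and that the distinctness of the two topological types genuinely rules out the factor-swapping symmetry, since otherwise the correct count would be a further quotient by a $\mathbb Z/2$.
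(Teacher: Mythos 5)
Your proposal is correct and follows essentially the same route as the paper: you invoke the identification of families with orbits of pairs of spherical systems under $\Aut(G)\times\mathcal B_r\times\mathcal B_s$ (the paper's Proposition \ref{prop: F_constants_on_orbits}), use the commutation of the $\Aut(G)$-action with Hurwitz moves, and show that the residual ambiguity after normalizing the first factor is exactly the left action of $\mathcal B\Aut(G,V_1)$ and the right action of $\mathcal B\Aut(G,V_2)$. The only difference is cosmetic: you build the map from families to double cosets by normalizing a representative, whereas the paper (Theorem \ref{thm: comp_inverse_image}) defines the inverse map $[\Phi]\mapsto[V_1,\Phi\cdot V_2]$ and checks well-definedness, surjectivity and injectivity; the underlying computations coincide.
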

This is a short version of the main Theorem \ref{thm: comp_inverse_image}. We also remind to the definitions in Subsection \ref{subsec: families_of_product-quotient_surfaces} that make clear the objects presented in Theorem  \ref{thm: short_version_main_thm}. The analogous case of Theorem \ref{thm: short_version_main_thm} where $V_1$ and $V_2$ have topological equivalent associated $G$-coverings of $\mathbb P^1$ is discussed in Corollary \ref{cor: card_set_of_families}. 
\vspace{0,3cm}

As a consequence of these improvements, we run the above-mentioned script to obtain a classification of regular product-quotient surfaces $S$ with $23\leq K_S^2\leq 32$ and $\chi(\mathcal O_S)=4$. What we obtain is the following
\begin{theorem}\label{thm: intro_teo_classif_pg=3}
	Let $S$ be a regular product-quotient surface with $23\leq K_S^2\leq 32$ and $\chi(\mathcal O_S)=4$.  Then $S$ is a surface of general type and it realizes one of the families of surfaces described in tables \ref{table: class_PQ_pg=3} to \ref{table: class_PQ_pg=3_excep_minimality}  in the appendix of this paper. Moreover, surfaces in tables  \ref{table: class_PQ_pg=3} to \ref{table: class_PQ_pg=3_12} are minimal.  
\end{theorem}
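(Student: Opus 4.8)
The plan is to prove the statement by an effective, finite computer-assisted classification, justified by a priori bounds from Theorem~\ref{thm: invariants_PQ}, and then to settle general type and minimality by a geometric post-analysis. I first recall the structure: a regular product-quotient surface $S$ is the minimal resolution of $X=(C_1\times C_2)/G$, where $G$ acts on each $C_i$ with $C_i/G\cong\mathbb{P}^1$, and the two actions are encoded by a pair of spherical systems of generators $V_1,V_2$ of $G$ together with their branching signatures. Since the cyclic quotient singularities of $X$ are rational, $\chi(\mathcal{O}_S)=\chi(\mathcal{O}_X)=\frac{(g_1-1)(g_2-1)}{|G|}$, while $K_S^2=\frac{8(g_1-1)(g_2-1)}{|G|}-\sum_i k_i$, the sum running over the basket of singularities with each $k_i\ge 0$ (and $k_i=0$ exactly for rational double points). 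Imposing $\chi(\mathcal{O}_S)=4$ forces $(g_1-1)(g_2-1)=4|G|$, so in particular $g_1,g_2\ge 2$, and $23\le K_S^2\le 32$ forces $\sum_i k_i\le 9$, bounding the non-canonical part of the basket. Combined with Riemann--Hurwitz these relations bound $|G|$, the two signatures, and the genera $g_1,g_2$, leaving a finite, explicitly computable list of candidate tuples $(G,V_1,V_2)$.

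Next I would enumerate, for each admissible $G$ and each admissible pair of signatures, the spherical systems of generators --- using the database and the routine \textit{FindGenerators} of \cite{CGP23} in the genus range it covers, and generating them directly otherwise --- and then cut the resulting list down to families of surfaces up to isomorphism. This last reduction is exactly what Theorem~\ref{thm: short_version_main_thm} provides: when $V_1,V_2$ have distinct associated topological types, the families are in bijection with the double cosets $\mathcal{B}\Aut(G,V_1)\backslash\Aut(G)/\mathcal{B}\Aut(G,V_2)$, and the topologically equivalent case is handled by Corollary~\ref{cor: card_set_of_families}. Running the improved MAGMA script over the finite candidate list, reducing modulo these double cosets, and recomputing $K_S^2,\chi(\mathcal{O}_S),q$ for each surviving family then yields precisely the families recorded in the tables, all with $q=0$, $\chi=4$ and $23\le K_S^2\le 32$.

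For the assertion that every such $S$ is of general type I would argue uniformly: the relation $(g_1-1)(g_2-1)=4|G|>0$ forces $g_1,g_2\ge 2$, so $C_1\times C_2$ is of general type, and since $X$ is a quotient by a finite group with only rational (log terminal) singularities, $\kappa(S)=\kappa(X)=\kappa(C_1\times C_2)=2$. Minimality is more delicate. The exceptional divisor of $S\to X$ over each cyclic quotient singularity is a Hirzebruch--Jung string whose components all have self-intersection $\le -2$, so no exceptional curve is itself a $(-1)$-curve; a $(-1)$-curve on $S$ can only appear as the strict transform of a fiber component of one of the two induced fibrations $S\to\mathbb{P}^1$ passing through singular points. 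I would therefore, family by family, read off the basket and check --- via the discrepancies and the intersection numbers of the relevant strict transforms with $K_S$ --- whether such a $(-1)$-curve occurs; this rules out non-minimality uniformly for the families in tables~\ref{table: class_PQ_pg=3} to~\ref{table: class_PQ_pg=3_12}, while the families in table~\ref{table: class_PQ_pg=3_excep_minimality} are precisely those whose baskets contain the configurations for which the uniform criterion is inconclusive and which must be inspected individually.

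The main obstacle will be this last, geometric step. Unlike the enumeration, which is group-theoretic and can be delegated to the script, the minimality analysis requires controlling the $(-1)$-curves produced by the resolution and their interaction with the pullback of $K_X$, and this depends sensitively on the singularity types in the basket; the cases where the uniform argument fails must each be treated by hand. The point that most needs care is completeness --- guaranteeing that the automated filter and the manual check together account for every family, so that no surface is lost between them --- together with the verification that the a priori bounds underlying the finiteness of the search have been applied sharply enough to leave no admissible tuple $(G,V_1,V_2)$ unexamined.
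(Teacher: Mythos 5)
Your overall architecture --- a priori bounds, finite MAGMA enumeration, reduction to families via Theorem \ref{thm: short_version_main_thm} and Corollary \ref{cor: card_set_of_families}, then a geometric analysis of general type and minimality --- is indeed the paper's, but two of your steps would fail as written. On the enumeration side, your invariant relation is wrong: $\chi(\mathcal O_S)=(g_1-1)(g_2-1)/|G|$ holds only when the basket is empty. Theorem \ref{thm: invariants_PQ} together with Noether's formula gives $\chi(\mathcal O_S)=(g_1-1)(g_2-1)/|G|+\left(e(\mathcal B)-k(\mathcal B)\right)/12$; for instance in every row with basket $1/2^8$ one has $(g_1-1)(g_2-1)/|G|=3$, not $4$. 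So ``$\chi=4$ forces $(g_1-1)(g_2-1)=4|G|$'' is false, and an enumeration built on it would wrongly include some tuples and miss every family whose basket has $e(\mathcal B)\neq k(\mathcal B)$; the correct bounds are those of Proposition \ref{prop: fixed_Ksquare_and_chi_get_finitely_many}. More seriously, you never confront the cases the automated search cannot reach: the order of $G$ forced by Proposition \ref{prop: fixed_Ksquare_and_chi_get_finitely_many}(b) can be $1024$, larger than $2000$, or a number like $1728$ with too many isomorphism classes, and for these MAGMA has no usable list of groups, so ``generating them directly otherwise'' is not available. Handling this secondary output is the bulk of the paper's proof of the theorem (Subsection \ref{subsec: excep_cases}): roughly $150$ skipped triples for $K^2=32$ alone are excluded by hand, using the abelianization condition \eqref{eq: cond_abelianization_orders}, perfect-group searches, and Proposition \ref{prop: intermediate_quotients} applied to normal subgroups of prime index (commutator subgroups, reconstruction of $G$ as a semidirect product), and exactly this analysis produces the two exceptional families $no.$ $267$ and $544$. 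Without it, completeness of the tables is unproved --- this is the concern you flag at the end, but you give no mechanism to resolve it.

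On the geometric side, your general-type argument is incorrect: $\kappa(S)=\kappa(X)$ does not follow from $X$ having rational (log terminal) singularities, because cyclic quotient singularities need not be canonical, so the resolution can lose pluricanonical sections and one only gets $\kappa(S)\leq\kappa(X)$. This is not a technicality: Section \ref{sec: comp_complexity} of this very paper exhibits product-quotient surfaces (with $g_i\geq 2$, hence $\kappa(C_1\times C_2)=2$) that are \emph{not} of general type. The correct argument is the classification one used in Subsection \ref{subsec: rational_curves_on_PQ}: $p_g=3\neq 0$ excludes $\kappa=-\infty$ (in particular rationality), and if $\kappa(S)\leq 1$ the minimal model would have $K^2=0$, forcing $K_S^2\leq 0$, against $K_S^2\geq 23$. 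For minimality, your starting observation (Hirzebruch--Jung components have self-intersection $\leq -2$, so a $(-1)$-curve must be a strict transform) is right, but ``check via discrepancies, family by family'' is a plan rather than a proof; the paper instead invokes the uniform criterion of Proposition \ref{prop: minimality_lemma} (exceptional locus consisting of $(-2)$- and $(-3)$-curves, or of at most two $(-3)$- or $(-4)$-curves plus $(-2)$-curves) and settles the remaining cases $no.$ $186$--$196$, whose basket is $\{1/5,4/5\}$, by \cite[Prop. 4.7(3)]{BP12}.
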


We are  interested to compute the degree of the canonical map of product-quotient surfaces, with a particular focus to those with $p_g=3$.   
\\
Let $S$ be a product-quotient surface given by a pair of curves $C_1$ and $C_2$ and a finite group $G$. We prove that the degree of the canonical map of $S$ is determined automatically whenever we compute the (schematic) base locus of the linear subsystem 
associated to the subspace $H^{2,0}(C_1\times C_2)^G$ of invariant $2$-forms of $C_1\times C_2$. 

Such subspace splits as a direct sum of subspaces $\left(H^{1,0}(C_1)^{\chi}\otimes H^{1,0}(C_2)^{\overline{\chi}}\right)^G$, denoted for short by $V_\chi$, one for each irreducible character $\chi \in \Irr(G)$. We remind to Section \ref{sec: degree_can_map} where it makes more clear the meaning of these subspaces. We need the following 

\begin{description}
	\item[\hypertarget{Property $\left(\#\right)$}{Property $\left(\#\right)$}] A product-quotient surface $S$ satisfies Property $\left(\#\right)$ if 
	\[
	\dim V_\chi \neq 0 \implies \deg(\chi)=1
	\]
	for each $\chi \in \Irr(G)$.
\end{description}
\begin{remark}
	\hyperlink{Property $\left(\#\right)$}{Property $\left(\#\right)$} always holds for $G$ abelian group, since each irreducible character of $G$ has degree $1$. 
\end{remark}
Assume that $S$ satisfies \hyperlink{Property $\left(\#\right)$}{Property $\left(\#\right)$}. Then Corollary \ref{cor: base_locus_Kc1c2_chi_G} gives a formula of the base locus of each linear subsystem associated to the subspace $V_\chi$, $\chi\in \Irr(G)$, and so of the base locus of $H^{2,0}(C_1\times C_2)^G$ by intersecting them.
In other words, \hyperlink{Property $\left(\#\right)$}{Property $\left(\#\right)$} makes computable the degree of the canonical map of $S$ automatically, see Subsection \ref{subsec: example_comp_deg_can_map} for an example.

Furthermore, Corollary \ref{cor: base_locus_Kc1c2_chi_G} also implies that \hyperlink{Property $\left(\#\right)$}{Property $\left(\#\right)$} makes the canonical system $\vert K_S\vert$ of $S$ spanned by $p_g$ divisors that are union of fibres (with multiplicity) for the natural fibrations $S\to C_i$, $i=1,2$. 

Thus, we have used the results obtained in Section \ref{sec: degree_can_map} to produce a MAGMA code that automatically computes the degree of the canonical map of a product-quotient surface $S$ with $q=0$ and $p_g=3$ satisfying  \hyperlink{Property $\left(\#\right)$}{Property $\left(\#\right)$}.

We have then selected those surfaces in Theorem \ref{thm: intro_teo_classif_pg=3} satisfying \hyperlink{Property $\left(\#\right)$}{Property $\left(\#\right)$} and we have computed the degree of their canonical map. We have obtained  a series of examples that are listed in Table \ref{table: class_PQ_pg=3_with_deg_can_map}. The numbers of column $no.$ of Table  \ref{table: class_PQ_pg=3_with_deg_can_map} are referred to the row number of tables \ref{table: class_PQ_pg=3} to \ref{table: class_PQ_pg=3_excep_minimality} in the appendix.  
We refer to the appendix of this paper where we explain in details all the other information contained in the columns of  Table \ref{table: class_PQ_pg=3_with_deg_can_map}. 
\begin{table}
	\centering
	\hspace*{-\leftmargin}
	\begin{tabular}{| c|c| c |c|c| l| c| c|c | }
		\hline
		$no.$ &	 $K^2_S$ & Sing($X$) & $t_1$ & $t_2$ & $G$ & $Id$ &$N$ & $\text{deg}(\Phi_S)$\\
		\hline
		\hline
		$1$ & 32 & \empty & $2^{6}$ &	$2^8$ & $\mathbb Z_2^3$ & $\langle 8, 5\rangle $ & 3 &$8, 16^2$ \\ 
		$2$ & 32 & \empty  & $2^{5}$ &	$2^{12}$ & $\mathbb Z_2^3$ & $\langle 8, 5\rangle $ & 3 & 0,4,8\\ 
		$3$ & 32 & \empty & $3^4$ &	$3^7$ & $\mathbb Z_3^2$ & $\langle 9, 2\rangle $ & 2 & 6, 12  \\
		$4$ & 32 & \empty &$3^5$ &	$3^5$ & $\mathbb Z_3^2$ & $\langle 9, 2\rangle $ & 1 & 9 \\ 
		$5$ & 32 & \empty & $2^{3},4^2$ &	$2^3,4^2$ & $G(16,3)$ & $\langle 16, 3\rangle $ & 2 & 16\\
		$7$ & 32 & \empty & $2^2,4^2$ &	$2^5,4^2$ & $G(16,3)$ & $\langle 16, 3\rangle $ & 6 & 8\\ 
		$9$ & 32 & \empty & $2^{3},4$ &	$2^{12}$ & $\mathbb Z_2\times D_4$ & $\langle 16, 11\rangle $ & 6 & 0\\ 
		$12$ & 32 & \empty & $2^{6}$ &	$2^6$ & $\mathbb Z_2\times D_4$ & $\langle 16, 11\rangle $ & 1 & 32 \\ 
		$13$ & 32 & \empty & $2^{5}$ &	$2^8$ & $\mathbb Z_2^4$ & $\langle16, 14\rangle $ & 13 & $0, 8^5, 16^7$\\ 
		$14$ & 32 & \empty & $2^{6}$ &	$2^6$ & $\mathbb Z_2^4$ & $\langle 16, 14\rangle $ & 6 & $8,16^3,32^2$\\ 
		$21$ &32 & \empty & $2^{2},4^2$ &	$2^3,4^2$ & $G(32,22)$ & $\langle 32, 22\rangle $ & 7 & 16\\  
		$28$ & 32 & \empty & $2^5$ &	$2^6$ & $\mathbb Z_2^2\times D_4$ & $\langle 32, 46\rangle $ & 4 & 24\\ 
		$42$ & 32 & \empty & $7^3$ &	$7^3$ & $\mathbb Z_7^2$ & $\langle 49,2\rangle $ & 7 & $0,5,7,10,11,14^2$\\
		$48$ & 32 & \empty & $2^{2},4^2$ &	$2^2,4^2$ & $\mathbb Z_2^5\rtimes \mathbb Z_2 $ & $\langle 64, 60\rangle $ & 3 & 32 \\   
		\hline
		\hline
		$87$ &30& $1/2^2$& $2^3,4$ &	$2^{10},4$ & $\mathbb Z_2\times D_4$ & $\langle 16, 11\rangle $ &6 &0 \\ 
		$88$ &30& $1/2^2$& $2^{4},4$ &	$2^5,4$ & $\mathbb Z_2\times D_4$ & $\langle 16, 11\rangle $ & 2&  4\\ 
			\hline
			\hline
			$119$ &28& $1/2^4$ & $2^2,4^2$ &	$2^8,4^2$ & $\mathbb Z_2\times \mathbb Z_4$ & $\langle 8, 2\rangle $ & 1& 0\\ 
			$120$ &28& $1/2^4$& $2^5$ &	$2^{11}$ & $\mathbb Z_2^3$ & $\langle 8, 5\rangle $ & 6 & $0^2,4^3,8$\\
			$123$ &28& $1/2^4$& $2^{3},4$ &	$2^{11}$ & $\mathbb Z_2\times D_4$ & $\langle 16, 11\rangle $ &  14& 0\\
			$124$ &28& $1/2^4$& $2^{5}$ &	$2^6,4$ & $\mathbb Z_2\times D_4$ & $\langle 16, 11\rangle $ & 6 & 8\\ 
			$125$ &28& $1/2^4$& $2^2,3^2$ &	$3^4,6^2$ & $\mathbb Z_3\times S_3$ & $\langle 18, 3\rangle $ & 6 & $6^2$\\ 
			\hline\hline
			$198$ &26& $1/2^6$& $2^3,4$ &	$2^9,4$ & $\mathbb Z_2\times D_4$ & $\langle 16, 11\rangle $ &14  &0 \\ 
			\hline
				$225$ &26&  $1/3^2,2/3^2$& $3,9^2$ &	$3^2,9^2$ & $\mathbb Z_3\times \mathbb Z_9$ & $\langle 27, 2\rangle $ &6 &$6^3,7,9,10$ \\ 
				$237$ &26& $1/3^2,2/3^2$& $2,6^2$ &	$2^4,6^2$ & $\mathbb Z_2^2\times \mathcal A_4$ & $\langle 48, 49\rangle $ & 5&8 \\ 
			\hline
			\hline
			$ 283$ &24& $ 1/2^8 $ & $ 2^6 $ & $ 2^{10} $ & $ \mathbb Z_2^2$ & $\langle 4, 2 \rangle$ &1 &0 \\
			$ 284 $ &24& $ 1/2^8 $ & $ 2^3, 4^2 $ & $ 2^4, 4^2 $ & $ \mathbb Z_2\times \mathbb Z_4 $ & $\langle 8, 2 \rangle$ & 1&8 \\
			$ 285 $ &24& $ 1/2^8 $ & $ 2^2, 4^2 $ & $ 2^7, 4^2 $ & $  \mathbb Z_2\times \mathbb Z_4  $ & $\langle 8, 2 \rangle$ & 1&2 \\

			$ 286 $ &24& $ 1/2^8 $ & $ 2^2, 4^2 $ & $ 2^4, 4^4 $ & $ \mathbb Z_2\times \mathbb Z_4 $ & $\langle 8, 2 \rangle$ &2 &2, 8 \\
			$ 289 $ &24& $ 1/2^8 $ & $ 2^6 $ & $ 2^7 $ & $ \mathbb Z_2^3 $ & $\langle 8, 5 \rangle$ &11 & $4^3, 6^2, 8^3, 12^2, 16$ \\
			$ 290 $ &24& $ 1/2^8 $ & $ 2^5 $ & $ 2^{10} $ & $ \mathbb Z_2^3 $ & $\langle 8, 5 \rangle$ &14 & $0^4, 4^7, 6, 8^2$ \\
			$ 295 $ &24& $ 1/2^8 $ & $ 2, 4^3 $ & $ 4^4 $ & $\mathbb Z_4^2$ & $\langle 16, 2 \rangle$ & 1&12 \\
			$ 296 $ &24& $ 1/2^8 $ & $ 2^2, 4^2 $ & $ 2^4, 4^2 $ & $\mathbb Z_2^2\rtimes \mathbb Z_4 $ & $\langle 16, 3 \rangle$ &13 & $8^3$\\
			$ 298 $ &24& $ 1/2^8 $ & $ 2^2, 4^2 $ & $ 2^4, 4^2 $ & $ \mathbb Z_2^2\times \mathbb Z_4$ & $\langle 16, 10 \rangle$ &10  & $8^4, 12^4, 16^2$\\
				$ 303 $ &24& $ 1/2^8 $ & $ 2^3, 4 $ & $ 2^{10} $ & $ \mathbb Z_2\times D_4 $ & $\langle 16, 11 \rangle$ & 27& 0\\
			$ 304 $ &24& $ 1/2^8 $ & $ 2^5 $ & $ 2^7 $ & $\mathbb Z_2\times D_4 $ & $\langle 16, 11 \rangle$ &4 &16 \\
			$ 305 $ &24& $ 1/2^8 $ & $ 2^4, 4 $ & $ 2^6 $ & $ \mathbb Z_2\times D_4$ & $\langle 16, 11 \rangle$ & 14&$8^2$ \\
			$ 308 $ &24& $ 1/2^8 $ & $ 2^5 $ & $ 2^7 $ & $ \mathbb Z_2^4 $ & $\langle 16, 14 \rangle$ &13 &$8^5, 12^4, 16^4$  \\
			$ 309 $ &24& $ 1/2^8 $ & $ 2^2, 3^2 $ & $ 3, 6^4 $ & $ \mathbb Z_3\times S_3 $ & $\langle 18, 3 \rangle$ &3 &0, 6 \\
			$ 312 $ &24& $ 1/2^8 $ & $ 2, 3^4, 6 $ & $ 2^2, 3^2 $ & $ \mathbb Z_3\times S_3 $ & $\langle 18, 3 \rangle$ &3 & 6\\
			$ 376 $ &24& $ 1/2^8 $ & $ 2, 3^2, 6 $ & $ 3, 6^2 $ & $S_3\times \mathbb Z_3^2 $ & $\langle 54, 12 \rangle$ &9 &12, (16, 18), (13, 15), 18, 24 \\
			\hline
			$ 459 $ &24& $ 1/4^2, 3/4^2 $ & $ 2^3, 4 $ & $ 2^9, 4 $ & $ \mathbb Z_2\times D_4 $ & $\langle 16, 11 \rangle$ & 6& 0\\
			\hline
			\hline
			$ 475 $ &23& $ 1/3^3, 2/3^3 $& $ 3^4 $& $ 3^6 $& $ \mathbb Z_3^2 $& $\langle 9, 2 \rangle$ &6 &$6^5,9$ \\
			$ 477 $ &23& $ 1/3^3, 2/3^3 $& $ 2^2, 3^2 $& $ 2^4, 3, 6 $& $ \mathbb Z_2\times \mathcal A_4$& $\langle 24, 13 \rangle$ &2 &8 \\
			$ 486 $ &23& $ 1/3^3, 2/3^3 $& $ 2, 6^2 $& $ 2^4, 3, 6 $& $ \mathbb Z_2^2\times \mathcal A_4 $& $\langle 48, 49 \rangle$ &6 & 8\\
			\hline
	\end{tabular}
\caption{} \label{table: class_PQ_pg=3_with_deg_can_map}
\end{table}

 The paper is organized as follows: 
\\
In Section \ref{sec: from_data_coverings_of_P1_to} we discuss finite group actions on a product of Riemann surfaces. We  then present the main Theorem \ref{thm: comp_inverse_image}, the extended version of Theorem \ref{thm: short_version_main_thm} of the introduction, crucial to speed up the classification algorithm.

Roughly speaking, Theorem \ref{thm: comp_inverse_image} plays a crucial role in determining the irreducible families of surfaces of Theorem \ref{thm: intro_teo_classif_pg=3}. 
Techniques to establish whether a pair of product-quotient surfaces belong  to the same irreducible family have been extensively studied first in \cite[Thm. 1.3]{BC04} and \cite[Prop. 5.2]{BCG08} in the case of surfaces isogenous to a product, and then in the general case in \cite{BP12}. 
\\
Theorem \ref{thm: comp_inverse_image} seems to be a relevant new result on this problem, very useful in overcoming the huge amount of calculations that usually occur when adopting those techniques. We refer also to the last Section \ref{sec: comp_complexity} where, among other things, we show the efficiency of our MAGMA code by using Theorem \ref{thm: comp_inverse_image}. 

Apart from the rows where $N$ of families is denoted by $?$, and whose challenges are discussed in Remark \ref{rem: comp_diffi_Find_Surf}, the classification outlined in Theorem \ref{thm: intro_teo_classif_pg=3} yields a total of $1502$ irreducible families of minimal surfaces of general type. Additionally, each family with $K^2=32$ maps onto an irreducible component (in the Zariski topology) of the Gieseker moduli space $\mathfrak M_{(4, 32)}$, which consists of surfaces of general type with $K_S^2=32$ and $\chi(\mathcal O_S)=4$. The remaining cases, where $23\leq K^2\leq 30$, are more delicate and we refer to Subsection \ref{subsec: families_of_product-quotient_surfaces} and Remark \ref{rem: Gieseker_modui}.

In Section \ref{sec: class_algorithm} we
generalize \cite[Prop. 1.14]{BP12} to any $\chi \in \mathbb N$ and discuss the classification algorithm. 

In Section \ref{sec: class_chi=4} we prove Theorem \ref{thm: intro_teo_classif_pg=3}. In particular, we show that all surfaces of  Theorem \ref{thm: intro_teo_classif_pg=3} are of general type and those in tables \ref{table: class_PQ_pg=3} to \ref{table: class_PQ_pg=3_excep_minimality} are also minimal. We also discuss the exceptional cases arising from the secondary output of the function $ListGroups(K^2,4)$ for each $K^2\in\{23,\dots, 32\}$ in order to obtain the complete list of tables \ref{table: class_PQ_pg=3}  to \ref{table: class_PQ_pg=3_excep_minimality} of the appendix.

In Section \ref{sec: degree_can_map} we investigate the canonical map of product-quotient surfaces $S$ and we present an algorithm to compute its degree whenever $S$ satisfies \hyperlink{Property $\left(\#\right)$}{Property $\left(\#\right)$}. The examples of  \ref{table: class_PQ_pg=3_with_deg_can_map} with a high degree of the canonical map that are to our knowledge already discovered in the literature are the following:
\begin{itemize}
	\item  Surfaces of $no.42$ in Table \ref{table: class_PQ_pg=3_with_deg_can_map} are the examples presented in \cite{fede1};
	\item Families of surfaces $no. 376$ having a degree of the canonical map $12$, $(16, 18)$, $(13, 15)$, $18$ are all those in \cite{fede2}.
	Furthermore, we point out that only surfaces $no. \ \textit{1}$ of \cite[Thm. 2.3]{fede2} satisfy \hyperlink{Property $\left(\#\right)$}{Property $\left(\#\right)$} 
	thanks to which the degree of their canonical map was automatically  computable. 
	
\item	One of the families of $no. 28$ in Table \ref{table: class_PQ_pg=3_with_deg_can_map} having $24$ as degree of the canonical map is already studied by the author of the present paper and D. Frapporti and can be found in \cite[Sec. 6.3]{fede3}. We also mention that this family of surfaces does not satisfy  \hyperlink{Property $\left(\#\right)$}{Property $\left(\#\right)$}, 
	hence in this case we have had to find the equations of the pair of curves realizing that family of surfaces and then studied by hands the degree of their canonical map. 
	\item 
	Two of the six families of $no.14$ in Table \ref{table: class_PQ_pg=3_with_deg_can_map} having degree $32$ of the canonical map are discussed in \cite{gpr}. They are described there differently from us, as $\mathbb Z_2^4$-coverings of $\mathbb P^1\times \mathbb P^1$ using the language of Pardini's theory of abelian coverings. Surfaces of these families are the only examples in the literature with a canonical map of degree $32$, which is also the highest possible degree for product-quotient surfaces as observed in Remark \ref{rem: higest_value_Ksquare}. 
	
	Furthermore, 
	the authors proved in \cite[Prop. 5.3]{gpr} that these two examples are the only product-quotient surfaces with $G$ abelian having degree of the canonical map equal to $32$. The same question with $G$ not abelian was still-open and it find an answer in the present paper. Indeed, there are other families of surfaces in Table \ref{table: class_PQ_pg=3_with_deg_can_map} with a canonical map of degree 32. 
	
\end{itemize}

Section \ref{sec: comp_complexity} is devoted to comments about the computational complexity of the presented algorithms. 
\vspace{0,3cm}

An expanded version of Tables \ref{table: class_PQ_pg=3} to \ref{table: class_PQ_pg=3_excep_minimality} of the appendix describing all the needed data to work explicitly with one of the surfaces and a commented version of the MAGMA codes we used can be found here: 
\\
\url{https://github.com/Fefe9696/PQ_Surfaces_with_fixed_Ksquare_chi}

\section*{Notation}
We will use the basic notations of the theory of smooth complex projective surfaces, hence $K_S$ is the \textit{canonical class} of $S$, $p_g:=h^0(S, K_S)$ is the \textit{geometric genus}, $q(S):=h^1(S, \mathcal O_S)$ is the \textit{irregularity}, and $\chi(\mathcal{O}_S)=1-q+p_g$ is the \textit{Euler characteristic}.

\section{Algebraic characterization of families of product-quotient surfaces }
\label{sec: from_data_coverings_of_P1_to}
Let us consider a finite group $G$ acting on two smooth projective curves $C_1$ and $C_2$ of respective genera at least $2$. We consider the diagonal action of $G$ on $C_1\times C_2$. In this case the action of $G$ on $C_1\times C_2$ is called \textit{unmixed}. 
Following \cite[Remark 3.10]{Cat00}, we can assume that the action on $C_i$ is faithful. 
\begin{definition}\cite[Defn. 0.1]{BP12}
	The minimal resolution of singularities $S$ of $X=\left(C_1\times C_2\right)/G$ is called \textit{product-quotient} surface of the \textit{quotient model} $X$. 
\end{definition}
Let $S$ be a product-quotient surface of quotient model $\left(C_1\times C_2\right)/G$. From a theorem due to Serrano \cite[Prop. 2.2]{Ser96}, then $q(S)=0$ if and only if $C_i/G\cong \mathbb P^1$. 
\\
In other words, pairs of $G$-coverings of the projective line defines regular product-quotient surfaces. For this reason, let us briefly recall how coverings of $\mathbb P^1$ can be described. 

\subsection{Algebraic characterization of families of $G$-coverings of $\mathbb P^1$}\label{subsec: families_of_G_coverings}
\begin{definition}
	Let $G$ be a finite group. For a $G$-covering of $\mathbb P^1$ we mean a Riemann surface $C$ together with a (holomorphic) action $\phi$ of $G$ on $C$ such that the quotient $C/G$ is $\mathbb P^1$. Whenever we need to specify the action, we write $(C,\phi)$.
\end{definition}
There are two notions of equivalence among $G$-coverings of $\mathbb P^1$: 
we say that $C_1$ and $C_2$ are \textit{topologically equivalent} if there exists a orientation preserving homeomorphism $f\colon C_1\to C_2$ and an automorphism $\varphi \in \Aut(G)$ such that $f(g\cdot p)=\varphi(g)\cdot f(p)$ for any $g\in G$ and $p\in C_1$. We say that $C_1$ and $C_2$ are \textit{isomorphic} if moreover $f$ is a biholomorphism. 

Consider the set of G-coverings of $\mathbb P^1$ modulo isomorphism. The topological equivalence partitions it into equivalence classes, let $\mathcal C$ be one of them. Gonz\'alez-D\'iez and Harvey showed in \cite{GDH92} that $\mathcal C$ has a natural structure of connected complex manifold such that the natural map of $\mathcal C$ on the moduli space of curves mapping $(C, \phi)$ to $C$ is analytic. More precisely, the manifold $\mathcal C$ is the normalization of its image $\widetilde{\mathcal C}$. In particular, $\widetilde{\mathcal C} $ is always an irreducible subvariety of the moduli space of curves. 
\\
The manifold $\mathcal C$ can be realized by taking a $G$-covering $C\in \mathcal C$ and moving the branch points of its covering map $C\to \mathbb P^1$. Each move of the branch points of $C$ corresponds to a topological covering of a complement of $\mathbb P^1$ given by those points. Thus, from the Riemann Existence Theorem, $C$ can be endowed with a new structure as Riemann surface. Hence we obtain another $G$-covering of $\mathbb P^1$ always topological equivalent but possibly not isomorphic to $C$. More precisely, they are not isomorphic if the move of the branch points is not a projective transformation of $\mathbb P^1$. In other words, we are realizing $G$-coverings of $\mathcal C$ not isomorphic to $C$ whenever we fix three of the branch points of $C$ and move the remaining ones. 
\\
From this we may easily deduce the dimension of the complex manifold $\mathcal C$, which is $r-3$, where $r$ is the number of branch points of a $G$-covering $C\in \mathcal C$. 
\begin{definition}
	We set $\mathcal T^r(G)$ be the collection of all classes of $G$-coverings of  $\mathbb P^1$ ramified over $r$ points modulo topological equivalence. 
\end{definition}
From the above discussion, we invite the reader to think of each element of $\mathcal T^r(G)$ as a class $\mathcal C$ of families of $G$-coverings of $\mathbb P^1$ two-by-two not isomorphic but all topological equivalent to each other. 

We shall give an algebraic description of the elements of $\mathcal T^r(G)$. 
\begin{definition}\label{defn: ssg}
	A \textit{spherical system of generators} (of length $r$) of $G$ is a sequence $[g_1, \dots, g_r]\in G^r$ of elements of $G$ such that $g_i\neq 1$ for all $i$, and
	\begin{itemize}
		\item $G=\langle g_1, \dots, g_r\rangle$;
		\item $g_1\cdots g_r=1$. 
	\end{itemize}
	The sequence $[o(g_1), \dots, o(g_r)]$ is called \textit{signature} of $[g_1, \dots, g_r]$. 
\end{definition}
\begin{definition}
	We set $\mathcal D^r(G)\subset G^r$ be the collection of all spherical systems of generators of $G$ of length $r$. 
\end{definition}
\begin{remark}\label{rem: orbifold_morphism}
	For each signature $[m_1, \dots, m_r]$ consider the \textit{orbifold group} 
	\[\mathbb T(m_1,\dots, m_r):=\langle \gamma_1, \dots, \gamma_r\vert \gamma_1^{m_1}, \dots, \gamma_r^{m_r}, \gamma_1\cdots \gamma_r\rangle.\]
	There is a natural bijection among the  set of surjective homomorphisms $ \ \ \  \  \ \\ \ \ \mathbb T(m_1, \dots, m_r)\to G$ and the set of spherical systems of generators of signature $[m_1,\dots, m_r]$. 
	
	The bijection associates to any homomorphism $\varphi$ the spherical system of generators $[\varphi(\gamma_1), \dots, \varphi(\gamma_r)]$.
\end{remark}

Consider the braid group $\mathcal{B}_r$, whose presentation with generators $\sigma_1, \dots, \sigma_{r-1}$ is 
\[
\mathcal{B}_r=\left\langle \sigma_1, \dots, \sigma_{r-1}\colon \begin{split}
	\sigma_i\sigma_j & =\sigma_j\sigma_i,  \quad &\vert i-j\vert  >1 \\
	\sigma_i\sigma_j\sigma_i & =\sigma_j\sigma_i\sigma_j, \quad & \vert i-j\vert =1
\end{split} \right\rangle. 
\]
The elements of $\mathcal{B}_r$ are called \textit{Hurwitz moves}. We consider the following action of $\Aut(G)\times \mathcal{B}_r$ on $\mathcal{D}^r(G)$:
\[
\Psi\cdot [g_1, \dots , g_r]:= [\Psi(g_1), \dots, \Psi(g_r)], \qquad \Psi\in \Aut(G).
\]
\[
\sigma_i\cdot [g_1, \dots, g_r]:=[g_1, \dots, g_{i-1}, g_i\cdot g_{i+1}\cdot g_{i}^{-1}, g_i,  g_{i+2}, \dots g_{r}], \qquad \sigma_i\in \mathcal{B}_r.
\]
The action of the generators $\sigma_i$ extends to an action of the entire $\mathcal{B}_r$. 
\\
We finally have the following classical result 
\begin{theorem}\label{thm: bij_DrGmodulo_TrG}
	The collection of all classes of $G$-coverings of $\mathbb P^1$ ramified over $r$ points modulo topological equivalence is in bijection with  $\mathcal{D}^r(G)/\Aut(G)\times\mathcal{B}_r$:
	\begin{equation}\label{eq: correspo_coverings_ssg}
		\mathcal{T}^r(G)\cong \mathcal{D}^r(G)/\Aut(G)\times \mathcal{B}_r.
	\end{equation}
\end{theorem}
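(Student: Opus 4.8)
The plan is to establish the bijection \eqref{eq: correspo_coverings_ssg} by identifying both sides with the set of isomorphism classes of surjections from the orbifold fundamental group of a punctured sphere onto $G$, modulo the appropriate symmetries. First I would fix $r$ distinct points $p_1, \dots, p_r \in \mathbb P^1$ and recall that the fundamental group $\pi_1(\mathbb P^1 \setminus \{p_1, \dots, p_r\})$ admits the standard presentation $\langle \gamma_1, \dots, \gamma_r \mid \gamma_1 \cdots \gamma_r = 1 \rangle$, where each $\gamma_i$ is a small loop around $p_i$. By the Riemann Existence Theorem, connected $G$-coverings of $\mathbb P^1$ branched over (a subset of) $\{p_1, \dots, p_r\}$ correspond bijectively to surjective homomorphisms $\rho \colon \pi_1(\mathbb P^1 \setminus \{p_1, \dots, p_r\}) \to G$, up to the choice of a base point identification with $G$ (i.e.\ up to post-composition by $\Inn(G)$). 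Imposing that each $\gamma_i$ has nontrivial image with prescribed finite orders produces exactly the orbifold groups $\mathbb T(m_1, \dots, m_r)$ of Remark \ref{rem: orbifold_morphism}, and via that remark such surjections are identified with spherical systems of generators, i.e.\ with elements of $\mathcal D^r(G)$.

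The core of the argument is to match the two equivalence relations. On the covering side, two $G$-coverings are topologically equivalent precisely when there is an orientation-preserving homeomorphism intertwining the two actions up to an automorphism $\varphi \in \Aut(G)$. I would decompose this relation into two independent pieces. The automorphism $\varphi$ acts on the associated surjection by post-composition, which on spherical systems of generators is exactly the $\Aut(G)$-action $\Psi \cdot [g_1, \dots, g_r] = [\Psi(g_1), \dots, \Psi(g_r)]$; note this already absorbs the ambiguity coming from $\Inn(G)$. The homeomorphism piece is where the braid group enters: an orientation-preserving self-homeomorphism of $\mathbb P^1$ permuting the branch points induces, on $\pi_1$ of the punctured sphere, an automorphism realized by the mapping class group of the $r$-punctured sphere, and its effect on the generating loops $\gamma_i$ is generated precisely by the elementary Hurwitz moves $\sigma_i$. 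Thus moving the branch points around one another transforms a spherical system of generators by the braid action displayed before the theorem, namely $\sigma_i \cdot [g_1, \dots, g_r] = [g_1, \dots, g_{i-1}, g_i g_{i+1} g_i^{-1}, g_i, g_{i+2}, \dots, g_r]$.

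With these two identifications in hand, I would assemble them: topological equivalence of $G$-coverings corresponds to the combined action of $\Aut(G) \times \mathcal B_r$ on $\mathcal D^r(G)$, and so the map $[g_1, \dots, g_r] \mapsto$ (the associated $G$-covering) descends to a well-defined bijection between $\mathcal D^r(G) / \Aut(G) \times \mathcal B_r$ and $\mathcal T^r(G)$. Surjectivity follows from the Riemann Existence Theorem (every $G$-covering arises this way) and injectivity from the fact that we have accounted for all sources of the equivalence. I expect the main obstacle to be the careful verification that the braid group action is the \emph{correct} algebraic shadow of the mapping class group action on the punctured sphere: one must check that the elementary braid $\sigma_i$ acts on the chosen loops $\gamma_i$ exactly by the half-twist formula, that this is compatible with the relation $\gamma_1 \cdots \gamma_r = 1$, and that no further relations in $\mathcal B_r$ collapse distinct topological types. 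Since this is a classical result, I would cite the standard references for the correspondence between branched coverings, permutation/monodromy representations, and Hurwitz moves rather than reproving the mapping-class-group computation from scratch, and would only spell out the dictionary between the geometric moves and the displayed $\Aut(G) \times \mathcal B_r$-action in enough detail to make the bijection transparent.
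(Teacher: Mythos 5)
Your proposal is correct and takes essentially the same route as the paper: the paper treats this statement as a classical result, citing \cite[Cor. 5.7]{GT22} for its proof, and then spells out exactly the bijection you construct — spherical systems of generators arising as monodromy with respect to a geometric basis of $\pi_1(\mathbb P^1\setminus X, q_0)$, with $\Aut(G)$ absorbing the $\Inn(G)$ ambiguity in the fiber identification and the Hurwitz moves realizing the effect of orientation-preserving homeomorphisms moving the branch points. Your decision to defer the mapping-class-group input (generation by half-twists and its compatibility with the braid action on geometric bases) to standard references mirrors the paper's own reliance on the literature for precisely that step.
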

For a recent proof, we refer to \cite[Cor. 5.7]{GT22}. 

We briefly describe the bijection in Theorem \ref{thm: bij_DrGmodulo_TrG}. Take an element in the quotient  $\mathcal{D}^r(G)/\Aut(G)\times \mathcal{B}_r$, and choose a representative $[g_1, \dots, g_r]$ of it. From Remark \ref{rem: orbifold_morphism} we obtain a surjective morphism $\mathbb T(m_1,\dots, m_r)\to G$, with $m_i=o(g_i)$.
\\
We choose a finite set $X:=\lbrace q_1, \dots, q_r\rbrace$ on $\mathbb P^1$, a point $q_0\in \mathbb P^1\setminus X$, and a  \textit{geometric basis} of  the fundamental group of $\mathbb P^1\setminus X$ 
with base point $q_0$, which is a set of $r$ distinct homotopy class loops $\eta_i$ of $\mathbb P^1\setminus X$  starting at $q_0$ and travelling around $q_i$ in the counterclockwise order, $i=1,\dots, r$. 
\\
Observe that any class loop of $\pi_1(\mathbb P^1\setminus X,q_0)$ is generated by  $\eta_1, \dots, \eta_r$ and the only relation among $\eta_1, \dots, \eta_r$ is that their product $\eta_1\cdots \eta_r$ can be contracted to $q_0$. In other words, we have obtained $\pi_1(\mathbb P^1\setminus X,q_0)$ has presentation  $\langle \gamma_1, \dots, \gamma_r| \gamma_1\cdots\gamma_r\rangle$.

We observe then $\mathbb T(m_1, \dots, m_r)$ is a quotient of $\pi_1(\mathbb P^1\setminus X,q_0)$, and the kernel of the composition 
\[
\pi_1(\mathbb P^1\setminus X,q_0)\to \mathbb T(m_1, \dots, m_r)\to G
\]
defines a unique topological $G$-covering of $\mathbb P^1\setminus X$. By Riemann Existence theorem, this completes to a $G$-covering $C$ of $\mathbb P^1$. 

The bijection of Theorem \ref{thm: bij_DrGmodulo_TrG} maps the class of  $[g_1, \dots, g_r]$ modulo $\Aut(G)\times \mathcal B_r$ to the class of $C$ modulo topological equivalence.

In particular, Theorem \ref{thm: bij_DrGmodulo_TrG} says that 
\begin{enumerate}
	\item  if in the above construction we change 
	\begin{itemize}
		\item the set of spherical generators $[g_1, \dots, g_r]$ by a set in the same orbit for the action of $\Aut(G)\times \mathcal B_r$, or
		\item the points $q_0, q_1, \dots, q_r$ with other $r+1$ points of $\mathbb P^1$, or
		\item the choice of the geometric basis $\eta_1,\dots, \eta_r$
	\end{itemize}
	then the new obtained $G$-covering is topologically equivalent to $C$;
	\item if $C_1$ and $C_2$ are obtained by spherical systems of generators that are not in the same $\Aut(G)\times \mathcal B_r$ orbit then $C_1$ and $C_2$ are not topologically equivalent.
	\item every $G$-covering $C$ of $\mathbb P^1$ ramified over $r$ points up to topological equivalence is obtained in this way by a spherical system of generators of $G$. 
\end{enumerate}
We need to discuss only the third point above. Hence we show how to get a spherical system of generators from a $G$-covering of the projective line. 
\\
Given a $G$-covering $\lambda\colon C\to \mathbb P^1$ whose branch locus consists of $r$ points $X:=\lbrace q_1, \dots, q_r\rbrace$, then $\lambda\colon C\setminus \lambda^{-1}(X)\to \mathbb P^1\setminus X$ is a topological covering. Chosen a point $p_0$ over $q_0\in \mathbb P^1\setminus X$, we consider the monodromy map $\pi_1(\mathbb P^1\setminus X)\to \lambda^{-1}(p_0)$. Since $q_0$ is not a branch point  of the covering, then $\lambda^{-1}(q_0)$ consists of $\vert G\vert$ points so that we can identity $\lambda^{-1}(q_0)$ with $G$: $g\cdot p_0\mapsto g$. In this way, the monodromy map becomes a morphism of groups. 
\\
Notice that only the kernel of this map is uniquely determined by the covering. 
\\
Each class loop of the geometric basis $\eta_1, \dots, \eta_r$ is sent to elements $g_1, \dots, g_r$ via the monodromy map. Since the monodromy map is surjective, then $g_1,\dots, g_r$ generate $G$, whilst the contraction of $\eta_1\cdots\eta_r$ to the point $q_0$ translates as $g_1\cdots g_r=1_G$. Thus, $[g_1, \dots, g_r]$ is a spherical system of generators of $G$. 

This construction makes also very clear the geometrical meaning of the elements $g_1, \dots, g_r$. Indeed, we see that $g_i$ is a generator of the stabilizer subgroup of a point $p_i$ over $q_i$ and there is a local coordinate $z$ on $C$ around $p_i$ such that the action of $g_i$ sends a point of coordinate $z$ to the point of coordinate $e^{\frac{2\pi i}{m_i}} z$, where $m_i$ is the ramification index of $p_i$. 
\\
In other words,  $g_i$ is  the \textit{local monodromy} of a point over $q_i$, see \cite[Sec. 2.1.1]{fede3} for more details. 
\begin{remark}
	Notice that from the above discussion we have also obtained the order of $g_i$ is simply the ramification index $m_i$ of $q_i$. 
	\\
	Hence the bijection of Theorem \ref{thm: bij_DrGmodulo_TrG} sends the class $[g_1,\dots, g_r]$ modulo $\Aut(G)\times \mathcal{B}_r$ to the class of a $G$-covering $C$ modulo topological equivalence branched on $r$ points $\lbrace q_1, \dots, q_r\rbrace$ with ramification indices $o(g_1), \dots, o(g_r)$ respectively and such that the Hurwitz formula holds:
	\begin{equation}\label{eq: Hurwitz_formula}
		2g(C)-2=\vert G\vert\left(-2+\sum_{i=1}^r\left(1-\frac{1}{o(g_i)}\right)\right).
	\end{equation}
	Here the cyclic groups $\langle g_i\rangle$ (and their conjugates) provide the non-trivial stabilizers of the action of $G$ on $C$. 
\end{remark}
Let us give a example of how to use Theorem \ref{thm: bij_DrGmodulo_TrG}.
\begin{example}\label{ex: ssg_S3xZ3square_ramified_over_3_points}
	We are going to compute $\mathcal T^3(S_3\times \mathbb Z_3^2)$, the collection of the $S_3\times \mathbb Z_3^2$-coverings of $\mathbb P^1$ up to topological equivalence ramified over $3$ points. Up to apply suitable Hurwitz moves, we can assume that a spherical system of generators $[(g_1, v_1), (g_2, v_2), (g_3, v_3)]$ has $o(g_1)\leq o(g_2)\leq o(g_3)$. Observe $g_i\neq 1$, otherwise $S_3$ would be generated by only one element, and this is not possible since it is not cyclic. The same argument holds for $\mathbb Z_3^2$, so that $v_i\neq 0$. 
	This implies   $[g_1, g_2, g_3]\in \mathcal{D}^3(S_3)$, and $[v_1, v_2, v_3]\in \mathcal{D}^3(\mathbb Z_3^2)$. By Hurwitz formula \ref{eq: Hurwitz_formula}, then $\sum_{i=1}^3\frac{1}{o(g_i)}$ has to be an integer, which holds only either for  $o(g_1)=o(g_2)=o(g_3)=3$ or $o(g_1)=o(g_2)=2$, and $o(g_3)=3$. The first case can be excluded since there are no $g_1, g_2, g_3$ of order $3$ generating $S_3$.
	\\
	Let us focus on the second case, which gives $g(C)=0$, so $C\cong \mathbb P^1$. The elements of order $2$ of $S_3$ are $\tau, \tau\sigma$, and  $\tau\sigma^2$, where $\tau$ is a reflection (transposition) and $\sigma$ is a rotation (3-cycle) of $S_3$. 
	Since $g_3=g_2^{-1}g_1^{-1}$, then $g_1\neq g_2$ otherwise $g_3=1$ since $g_1$ and $g_2$ has order two. 
	\\
	Thus the list of spherical systems with ordered signature $[2,2,3]$ consists only of six elements obtained just by choosing a distinct pair of $g_1, g_2$ in the set $\lbrace \tau, \tau\sigma, \tau\sigma^2\rbrace$. From here it is easy to see that the action of $\Aut(S_3)$ on $\mathcal{D}^3(S_3)$ is transitive. 
	
	From the other side, it is clear that the action of $\GL_2(\mathbb Z_3)$ on $\mathcal{D}^3(\mathbb Z_3^2)$ is transitive.  Thus $\Aut(S_3\times \mathbb Z_3^2)$ acts transitively on $\mathcal{D}^3(S_3\times \mathbb Z_3^2)$ and from Theorem \ref{thm: bij_DrGmodulo_TrG} we obtain
	\[
	\begin{split}
		\mathcal{T}^3(S_3\times \mathbb Z_p^2)\cong   \frac{\mathcal{D}^3(S_3\times \mathbb Z_3^2)}{\Aut(S_3\times \mathbb Z_3^2)\times  \mathcal{B}_3} 
		=  \lbrace [(\tau, (1,0)), (\tau\sigma, (0,1)), (\sigma^2, (p-1, p-1))]\rbrace. 
	\end{split}
	\]
	By the Hurwitz formula \eqref{eq: Hurwitz_formula}, the genus of the corresponding  $G$-covering $C$ is $g(C)=10$. 
	\\
 Here $C$ may be described explicitly by equations as follows: we consider the projective space $\mathbb{P}^3$ with homogeneous coordinates $x_0, \ldots, x_3$ and define
	\[
	C \colon \begin{cases}
		x_2^3=x_0^3-x_1^3 \\
		x_3^3=x_0^3+x_1^3
	\end{cases}.
	\]
	The action $\phi\colon  S_3\times \mathbb Z_3^2 \to \Aut(C)$ is given by 
	\[
	\left(\sigma^i\tau^j, (a,b)\right) \mapsto [(x_0:x_1:x_2: x_3) \mapsto (\zeta_3^ix_{[j]}:x_{[j+1]} : (-1)^j\zeta_3^{2a+2i} x_2:  \zeta_3^{2b+2i} x_3)],
	\]
	where $\zeta_3:=e^{\frac{2\pi i}{3}}$ is the first $3$-root of the unity. 
	Finally, the covering map by this action is 
	\[
	\lambda \colon C \stackrel{9 : 1}{\longrightarrow} \mathbb P^1 \stackrel{6 : 1}{\longrightarrow} \mathbb P^1, \qquad (x_0:x_1:x_2:x_3) \mapsto (x_0:x_1)\mapsto \left(x_0^3x_1^3: (x_0^6+x_1^6)/2\right).
	\]
\end{example}
\begin{remark}
	As we could expect, it becomes soon computationally difficult getting the $\Aut(G)\times \mathcal{B}_r$-orbits of $\mathcal{D}^r(G)$, when $r$ or $G$ increase. For this reason, several authors put an increased effort into the development of an efficient algorithm to compute such orbits, usually with the helping also of a computational algebra system (e.g. MAGMA, \cite{BCP97}). A big step forward in this direction is given for instance in \cite{CGP23}, where the authors 
	collect in a database a representative for any orbit of spherical systems of generators of fixed genus $g\leq 40$, with a few exceptions. 
	\\
	We use this database and their script \textit{FindGenerators} to speed up \textbf{Step 3} of  Subsection \ref{subsec: descr_implem_class_algorithm} and in combination with  Theorem \ref{thm: comp_inverse_image} to give improvements in \textbf{Step 5}.
\end{remark}

\subsection{Families of product-quotient surfaces from a pair of coverings of $\mathbb P^1$}\label{subsec: families_of_product-quotient_surfaces}
In this subsection we study how to realize all families of product-quotient surfaces obtained by a pair of topological types of $G$-coverings of $\mathbb P^1$. 
\begin{definition}\label{defn: isomorphism_betweeen_PQs}
	Let us call by $\mathcal{T}^{r,s}(G)$ the collection of all families of regular product-quotient surfaces, whose natural  fibrations $\lambda_i$ are $G$-coverings $C_i$ of $\mathbb P^1$ branched over $r$ and $s$ points respectively. 
\end{definition}
\begin{remark}\label{rem: exchanging_factors}
	In the above definition the order of $C_1$ and $C_2$ is relevant. Thus exchanging them gives  a natural bijection $\iota \colon\mathcal T^{r,s}(G)\to \mathcal T^{s,r}(G)$ which sends families to isomorphic families of surfaces. 
\end{remark}
We give a generalization of Theorem \ref{thm: bij_DrGmodulo_TrG} for product-quotient surfaces (see \cite{BP12} and \cite{BCGP12} for more details). 
\begin{proposition}\label{prop: F_constants_on_orbits}
	There is a natural bijection among $\mathcal T^{r,s}(G)$ and 
	\[\frac{\mathcal D^r(G)\times \mathcal D^s(G)}{\Aut(G)\times \mathcal B_r\times \mathcal B_s},
	\] 
	where $\Aut(G)$ acts simultaneously on both factors, whilst $\mathcal B_r$ and $\mathcal B_s$ act on the first and second factor respectively. 
\end{proposition}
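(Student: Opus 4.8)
The plan is to construct the bijection explicitly and then verify that it factors through the stated group action and is both injective and surjective. Starting from a pair $(V_1, V_2) \in \mathcal D^r(G) \times \mathcal D^s(G)$, I would apply the construction underlying Theorem \ref{thm: bij_DrGmodulo_TrG} to each factor separately, producing $G$-coverings $\lambda_i \colon C_i \to \mathbb P^1$ branched over $r$ and $s$ points respectively. Since $V_i$ generates $G$, the associated actions on the $C_i$ are faithful, and because $C_i/G \cong \mathbb P^1$ by construction, Serrano's criterion \cite[Prop. 2.2]{Ser96} guarantees that the quotient $X = (C_1 \times C_2)/G$ of the diagonal action is regular; its minimal resolution $S$ is a product-quotient surface, giving a well-defined point of $\mathcal T^{r,s}(G)$. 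This assignment is the candidate bijection, to be read at the level of orbits.

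The second step is to check that the map is constant on $\Aut(G) \times \mathcal B_r \times \mathcal B_s$-orbits. The braid factors are the easy part: by the discussion following Theorem \ref{thm: bij_DrGmodulo_TrG}, applying $\sigma \in \mathcal B_r$ to $V_1$ amounts to moving the branch points of $\lambda_1$ on $\mathbb P^1$, which deforms $C_1$ inside its connected family $\mathcal C$ in the sense of Gonz\'alez-D\'iez--Harvey \cite{GDH92}; since $C_2$ is untouched, the product $C_1 \times C_2$ and hence $S$ deforms within a single irreducible family, and symmetrically for $\mathcal B_s$ acting on $V_2$. For the simultaneous automorphism action, given $\Psi \in \Aut(G)$ the pair $(\Psi \cdot V_1, \Psi \cdot V_2)$ produces the same curves $C_1, C_2$ but with the $G$-action precomposed by $\Psi^{-1}$ on both factors; relabelling the group by $\Psi$ is then an isomorphism of diagonal $G$-actions on $C_1 \times C_2$, so the quotient $X$ and its resolution $S$ are unchanged up to isomorphism. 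This is precisely why only the diagonal copy of $\Aut(G)$, and not two independent copies, is quotiented out.

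Surjectivity follows directly from point (3) of the discussion after Theorem \ref{thm: bij_DrGmodulo_TrG}: every regular product-quotient surface in $\mathcal T^{r,s}(G)$ is built from a diagonal action on a product of two $G$-coverings of $\mathbb P^1$, and each such covering is realized by some spherical system of generators. The heart of the argument is injectivity: if two pairs $(V_1, V_2)$ and $(V_1', V_2')$ yield the same family, I must produce an element of $\Aut(G) \times \mathcal B_r \times \mathcal B_s$ carrying one to the other. Here I would invoke the rigidity of the product structure for product-quotient surfaces, in the style of Catanese \cite{Cat00} and \cite[Thm. 1.3]{BC04}, \cite[Prop. 5.2]{BCG08}: because the two $C_i$ have genus $\geq 2$, the two natural fibrations of $S$ are intrinsic, so any isomorphism (or deformation equivalence within the family) lifts to an isomorphism $C_1 \times C_2 \to C_1' \times C_2'$ that either preserves or swaps the factors. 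The swap is the content of $\iota$ in Remark \ref{rem: exchanging_factors} and does not arise here since we track the ordered pair; the factor-preserving case yields biholomorphisms $C_i \to C_i'$ compatible with the $G$-actions up to a single relabelling $\Psi \in \Aut(G)$ common to both factors. Applying the single-curve Theorem \ref{thm: bij_DrGmodulo_TrG} to each factor then realizes the equivalence as the action of $\Psi$ together with suitable braids in $\mathcal B_r$ and $\mathcal B_s$, that is, an element of $\Aut(G) \times \mathcal B_r \times \mathcal B_s$.

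The main obstacle I anticipate is precisely establishing that a single automorphism $\Psi$, rather than an independent pair $(\Psi_1, \Psi_2)$, suffices in the injectivity step; this is where the diagonal nature of the $G$-action on $C_1 \times C_2$ must be used decisively, and it is exactly what distinguishes $\mathcal T^{r,s}(G)$ from a naive product of two single-curve classifications. The companion Theorem \ref{thm: short_version_main_thm} may be read as the quantitative refinement of this point, expressing the fiber over a fixed pair of topological types as a set of double cosets.
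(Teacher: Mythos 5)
Your first two paragraphs coincide with what the paper actually proves. The paper's own argument for this proposition consists exactly of constructing the map $\mathcal D^r(G)\times \mathcal D^s(G)\to \mathcal T^{r,s}(G)$ (pair of spherical systems $\mapsto$ pair of $G$-coverings $\mapsto$ family of the resulting product-quotient surface) and of checking constancy on orbits, by the same two arguments you give: acting with $\Psi\in\Aut(G)$ simultaneously only relabels the group, and since $\Psi\times\Psi$ preserves the diagonal the quotient surface is unchanged up to isomorphism; acting with a braid moves branch points and, by Gonz\'alez-D\'iez--Harvey \cite{GDH92}, stays inside one irreducible connected family. The paper stops there, deferring everything else to \cite{BP12} and \cite{BCGP12}, so your surjectivity and injectivity paragraphs go beyond the paper's own text. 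Surjectivity as you argue it (Serrano plus point (3) of the discussion after Theorem \ref{thm: bij_DrGmodulo_TrG}) is unproblematic, and your injectivity route --- Catanese rigidity of products of curves of genus $\geq 2$, lifting to a factor-preserving isomorphism, then the deck-group conjugation $g\mapsto FgF^{-1}$ producing one automorphism $\Psi$ valid for both factors because the action is diagonal --- is the standard one, in the spirit of \cite[Prop. 5.2]{BCG08}.

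Two points in that extra material remain genuinely open, however. First, members of the same family are a priori only deformation equivalent, not isomorphic, so "yield the same family" does not immediately hand you an isomorphism to feed into the rigidity lemma; you need either the implicit definition of family (the set of surfaces obtained from one pair of tuples by moving branch points, so that coincidence of families produces an actual isomorphism between suitable members) or local constancy of the topological type of the pair of $G$-actions along a connected family. Second --- and this is precisely the obstacle you flag in your closing paragraph without resolving --- once you have $\Psi$-equivariant biholomorphisms $f_i\colon C_i\to C_i'$, the monodromy tuples of the two coverings are only pinned down up to braid moves \emph{and an inner automorphism on each factor separately}, coming from the independent choices of base point in the two fibres. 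So what you obtain directly is $(V_1',V_2')$ equivalent to $(\mathrm{inn}_a\circ\Psi\cdot V_1,\ \mathrm{inn}_b\circ\Psi\cdot V_2)$ with possibly different $a,b\in G$, which is weaker than equivalence under the simultaneous $\Aut(G)$-action in the statement. The missing ingredient is the classical fact that for a spherical system of generators every $\Inn(G)$-translate lies already in its braid orbit: since $h_1\cdots h_s=1$, the braid $\sigma_{s-1}^{-1}\cdots\sigma_1^{-1}$ realizes the cyclic rotation $[h_2,\dots,h_s,h_1]$, while $\sigma_1\cdots\sigma_{s-1}$ realizes $\mathrm{inn}_{h_1}$ composed with that rotation, so conjugation by each $h_i$, and hence by every element of $G$, is a braid move; this absorbs the stray $\mathrm{inn}_a$, $\mathrm{inn}_b$ into $\mathcal B_r\times\mathcal B_s$ and closes the argument. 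With those two repairs your proof is complete and is, in substance, the argument the paper outsources to \cite{BP12} and \cite{BCGP12}.
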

\begin{remark}\label{rem: Gieseker_modui}
	We point out that each of the families of $\mathcal T^{r,s}(G)$ maps onto an algebraic subset of the Gieseker moduli space, but the images of two different families may not be distinct. This because we are considering an equivalence relation among product-quotient surfaces that is weaker than the equivalence relation \textit{being isomorphic}. 
	\\
	However, as proved in \cite[Prop. 5.2]{BCG08}, regular product-quotient surfaces $S_1$ and $S_2$ isogenous to a product are isomorphic if either their algebraic data share the same orbit by the action of $\Aut(G)\times \mathcal B_r\times \mathcal B_s$ or the isomorphism is given by exchanging the factors  $C_1$ and $C_2$. 
\end{remark}
The  bijection  in Proposition \ref{prop: F_constants_on_orbits} is given by a map $\mathcal D^r(G)\times \mathcal D^s(G)\to T^{r,s}(G)$ as follows.

Consider a pair of spherical systems of generators $[g_1, \dots, g_r]$ and $\ \ $ $[h_1,\dots, h_s]$. 
We fix points $q_0, q_1, \dots, q_r \in  \mathbb P^1$ and a geometric basis $\eta_1, \dots, \eta_r$ as in the Subsection \ref{subsec: families_of_G_coverings}, where $\eta_i$ is a class loop based at $q_0$ travelling around $q_i$ in the counterclockwise order. In this way, following the description of Theorem \ref{thm: bij_DrGmodulo_TrG} in Subsection \ref{subsec: families_of_G_coverings} we get from the first spherical system $[g_1, \dots, g_r]$ a $G$-covering of the line $\lambda_1\colon C_1\to \mathbb P^1$ whose branch locus is $\{q_1, \dots, q_r\}$, where $q_i$ has ramification index $o(g_i)$ and $g_i$ is the local monodromy of a point over $q_i$. 
 
A similar argument holds for $[h_1,\dots, h_s]$, so we get another $G$-covering $\lambda_2\colon C_2\to \mathbb P^1$ branched over $s$ points with ramification indices $o(h_1), \dots, o(h_s)$ and $h_i$ is the local monodromy of a point over $q_i$. 

The diagonal action of $G$ on $C_1\times C_2$ gives a product-quotient surface $S$ whose quotient model is $\left(C_1\times C_2\right)/G$. 

The map $\mathcal D^r(G)\times \mathcal D^s(G)\to T^{r,s}(G)$ sends the pair of spherical systems $([g_1, \dots, g_r], [h_1,\dots, h_s])$ to the family of  $S$. 

Let us discuss how $\Aut(G)\times \mathcal B_r\times \mathcal B_s$ acts on this construction. 

We show that acting with  $\Psi\in \Aut(G)$ simultaneously on $[g_1, \dots, g_r]$ and $[h_1, \dots, h_s]$ the isomorphic class of $S$ does not change. Acting with $\Psi$, we obtain the same $G$-coverings $C_1$ and $C_2$, but the isomorphisms among $G$ and the automorphism groups of $C_1$ and $C_2$ are both modified by composition with $\Psi$.
Then we obtain the same product $C_1\times C_2$ and the action of $G\times G$ on it has been modified by composition with $\Psi\times \Psi$. Since $\Psi\times \Psi$ sends the diagonal to itself, then we obtain a surface isomorphic to $S$. 

The group $\mathcal B_r$ acts only on the first spherical system of generators $[g_1, \dots$ $, g_r]$ replacing $C_1$ with a topological equivalent $G$-covering $C_1'$ as described in Subsection \ref{subsec: families_of_G_coverings}. By the result of  Gonz\'alez-D\'iez and Harvey in \cite{GDH92} mentioned there, then $C_1$ and $C_1'$ are in the same irreducible connected family of $G$-coverings. In particular, the action of $\mathcal B_r$ on the given construction connects surfaces of the same family. 

An analogous statement holds for the action of $\mathcal B_s$ on a spherical system of generators $[h_1, \dots, h_s]$.  
\vspace{0,4cm}

To each family of product-quotient surfaces we have a naturally associated pair of topological types of $G$-coverings, thus giving a surjective map $ \mathcal T^{r,s}(G)\twoheadrightarrow \mathcal T^r(G)\times \mathcal T^s(G)$. By Proposition  \ref{prop: F_constants_on_orbits} and Theorem \ref{thm: bij_DrGmodulo_TrG} we obtain  the following commutative diagram
\begin{equation}\label{eq: defn_map_pi}
	\begin{tikzcd}
		{\mathcal T^{r,s}(G)} && {\frac{\mathcal D^r(G)\times \mathcal D^s(G)}{\Aut(G)\times \mathcal B_r\times \mathcal B_s}} \\
		\\
		{\mathcal T^r(G)\times \mathcal T^s(G)} && {\frac{\mathcal D^r(G)}{\Aut(G)\times \mathcal B_r}\times \frac{\mathcal D^s(G)}{\Aut(G)\times \mathcal B_s}}
		\arrow[leftrightarrow, from=1-1, to=1-3]
		\arrow[two heads, from=1-1, to=3-1]
		\arrow[leftrightarrow, from=3-1, to=3-3]
		\arrow["\pi", dashed, two heads, from=1-3, to=3-3]
	\end{tikzcd}
\end{equation}
Here $\pi$ is defined as the only map making the diagram commutative.
Such $\pi$ sends the class of a pair of spherical systems of generators $[V_1, V_2]$ to the pair of classes $([V_1], [V_2])$. 

We are going to find the inverse image of each point $([V_1],[V_2])$ by $\pi$, which translates in determining each family of product-quotient surfaces afforded by the pair of topological types of $G$-coverings, the first given by $[V_1]$, and the second by $[V_2]$.
\begin{definition}
	Let $V$ be a spherical system of generators of length $r$ of a finite group $G$. The group of automorphisms of \textit{braid type} on $V$  is the following subgroup of $\Aut(G)$
	\[
	\mathcal B\Aut(G, V):=\lbrace \varphi \in \Aut(G)\colon \exists \  \sigma \in \mathcal{B}_r\ \textrm{such that }\varphi \cdot V=\sigma\cdot V  \rbrace.
	\]
\end{definition}
Since the action of an automorphism of $G$ commutes with the action of a braid on a spherical system of generators, then is immediate to see $\mathcal B\Aut(G, V)$ is subgroup of $\Aut(G)$: 
given $\varphi_1,\varphi_2 \in \mathcal B\Aut(G, V)$, then 
\[
\left(\varphi_1\circ \varphi_2^{-1}\right) \cdot V=\varphi_1 (\sigma_2^{-1} \cdot V)=\sigma_2^{-1} \cdot (\varphi_1\cdot V)=(\sigma_2^{-1}\sigma_1) \cdot V
\]
for some $\sigma_1,\sigma_2 \in  \mathcal{B}_r$. Thus $\varphi_1\circ \varphi_2^{-1} \in \mathcal B\Aut(G, V)$. 
\begin{remark}\label{rem: how_change_V_if_change_ssg_on_the _orbit}
	If you replace $V$ by $V'$ on its $\Aut(G)\times \mathcal{B}_r$-orbit, let us say $V':=(\Psi, \sigma)\cdot V$, then the subgroup $\mathcal B\Aut(G, V')$ is conjugate to $\mathcal B\Aut(G, V)$:
	\[
	\mathcal B\Aut(G, V')=\Psi \circ \mathcal B\Aut(G, V)\circ \Psi^{-1}.
	\]
	Note that $\Psi\in \mathcal B\Aut(G, V)$ implies $\mathcal B\Aut(G, V')=\mathcal B\Aut(G, V)$.
\end{remark}
\begin{definition}
	Let $V_1$ and $V_2$ be a pair of spherical systems of generators of $G$. We will say that two automorphisms $\Phi, \Psi\in \Aut(G)$ are $(V_1, V_2)-$related, and we will write 
	\[
	\Phi \sim_{V_1, V_2}\Psi
	\]
	if there exist $\varphi_1\in \mathcal B\Aut(G, V_1),  \varphi_2\in \mathcal B\Aut(G, V_2)$ such that 
	\[
	\Psi= \varphi_1\circ \Phi\circ \varphi_2.
	\]
	The relation $\sim_{V_1, V_2}$ is clearly an equivalence relation on $\Aut(G)$. We denote by $Q\Aut(G)_{V_1, V_2}$ the quotient of $\Aut(G)$ by $\sim_{V_1, V_2}$.
	
	In other words $Q\Aut(G)_{V_1, V_2}$ is the set of double cosets 
	\[
	Q\Aut(G)_{V_1, V_2}= \mathcal B\Aut(G, V_1)\backslash \Aut(G)/  \mathcal B\Aut(G, V_2).
	\]
\end{definition} 
	\begin{remark}\label{rem: how_change_relatin_sim_V1V2}
		Replacing $V_1$ and $V_2$ by two spherical systems of generators in the same orbits, namely $V_1'=(\Psi_1, \sigma_1)\cdot V_1$ and $V_2'=(\Psi_2, \sigma_2)\cdot V_2$, then by Remark \ref{rem: how_change_V_if_change_ssg_on_the _orbit} we have 
		\[
		\Phi\sim_{V_1, V_2}\Psi \iff \Psi_1\circ \Phi\circ \Psi_2^{-1}\sim_{V_1', V_2'} \Psi_1\circ \Psi \circ \Psi_2^{-1}.
		\]
		Moreover, the bijection 
		$\Phi\mapsto \Psi_1\circ \Phi\circ \Psi_2^{-1}$ induces a bijection among the quotients 
		\begin{equation}\label{eq: bij_amonog_quot_QAutG}
			Q\Aut(G)_{V_1, V_2} \leftrightarrow Q\Aut(G)_{V_1', V_2'}, \qquad [\Phi]\mapsto [\Psi_1\circ \Phi\circ \Psi_2^{-1}]
		\end{equation}
		that only depends on $V_1,V_2,V_1'.V_2'$ and not on the choice of $\Psi_1,\Psi_2$.
	\end{remark}
We can finally state and prove the main theorem of this section:
	\begin{theorem}\label{thm: comp_inverse_image}
	We consider the map $\pi$ defined at \eqref{eq: defn_map_pi}. Let us fix a point $x\in \frac{\mathcal D^r(G)}{Aut(G)\times \mathcal B_r}\times \frac{\mathcal D^s(G)}{Aut(G)\times \mathcal B_s}$, and let us choose a pair of spherical systems of generators $V_1$ and $V_2$ such that $x=([V_1], [V_2])$. The following hold:
	\begin{enumerate}
		\item given $\Phi\in \Aut(G)$, then 
		\[
		[V_1, \Phi\cdot V_2]\in \frac{\mathcal D^r(G)\times \mathcal D^s(G)}{\Aut(G)\times \mathcal B_r\times \mathcal B_s}
		\]
		depends only by class of $\Phi$ in $Q\Aut(G)_{V_1, V_2}$.
		\item 
		The map 
		\begin{equation}\label{eq: bij_among_QAutG_and_fibre}
			\begin{split}
				Q\Aut(G)_{V_1, V_2} & \longrightarrow \pi^{-1}(x) \\
				[\Phi] & \mapsto [V_1, \Phi\cdot V_2]
			\end{split}
		\end{equation}
		is bijective. In particular, $\vert \pi^{-1}(x)\vert =\vert Q\Aut(G)_{V_1, V_2}\vert$. 
		\item If we replace $V_1$ by $V_1'$ in the same $\Aut(G)\times\mathcal B_r$-orbit, and $V_2$ by $V_2'$ in the same $\Aut(G)\times \mathcal B_s$-orbit, then the bijective maps in  \eqref{eq: bij_amonog_quot_QAutG} and \eqref{eq: bij_among_QAutG_and_fibre} form a commutative triangle
		\[\begin{tikzcd}
			{Q\Aut(G)_{V_1',V_2'}} \\
			&& {\pi^{-1}(x)} \\
			{Q\Aut(G)_{V_1,V_2}}
			\arrow[leftrightarrow, from=1-1, to=3-1]
			\arrow[leftrightarrow, from=1-1, to=2-3]
			\arrow[leftrightarrow, from=3-1, to=2-3]
		\end{tikzcd}\]
	\end{enumerate}
\end{theorem}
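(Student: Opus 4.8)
The whole argument rests on the single structural fact recorded just before the definition of $\mathcal B\Aut(G,V)$, namely that the $\Aut(G)$-action and the braid actions of $\mathcal B_r,\mathcal B_s$ on $\mathcal D^r(G)\times\mathcal D^s(G)$ commute; my plan is to use this freely to slide automorphisms past braids and thereby reduce every assertion to a normalization of a pair of spherical systems. I write $[\,\cdot\,]$ for the class in $\frac{\mathcal D^r(G)\times\mathcal D^s(G)}{\Aut(G)\times\mathcal B_r\times\mathcal B_s}$, and I recall that $\varphi\in\mathcal B\Aut(G,V)$ means exactly $\varphi\cdot V=\sigma\cdot V$ for some braid $\sigma$. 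First I would dispose of part (1) together with the well-definedness of \eqref{eq: bij_among_QAutG_and_fibre}: the class $[V_1,\Phi\cdot V_2]$ lies in $\pi^{-1}(x)$ because $\Phi\cdot V_2$ is $\Aut(G)$-equivalent to $V_2$, hence has class $[V_2]$; and if $\Phi'=\varphi_1\circ\Phi\circ\varphi_2$ with $\varphi_1\cdot V_1=\sigma\cdot V_1$ and $\varphi_2\cdot V_2=\tau\cdot V_2$, then commutativity rewrites $\Phi'\cdot V_2=\tau\cdot(\varphi_1\Phi\cdot V_2)$, and applying $\varphi_1^{-1}$ diagonally (noting $\varphi_1^{-1}\cdot V_1=\sigma^{-1}\cdot V_1$) followed by $\sigma$ on the first factor and $\tau^{-1}$ on the second transports $(V_1,\Phi'\cdot V_2)$ back to $(V_1,\Phi\cdot V_2)$.

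For part (2) I would prove surjectivity and injectivity separately. Surjectivity: given $[W_1,W_2]\in\pi^{-1}(x)$, the equality of classes lets me write $W_1=\alpha\sigma\cdot V_1$ and $W_2=\beta\tau\cdot V_2$ with $\alpha,\beta\in\Aut(G)$, $\sigma\in\mathcal B_r$, $\tau\in\mathcal B_s$; applying $\alpha^{-1}$ diagonally and then $\sigma^{-1},\tau^{-1}$ on the two factors normalizes the pair to $(V_1,\Phi\cdot V_2)$ with $\Phi=\alpha^{-1}\beta$, so $[W_1,W_2]$ is in the image. Injectivity: if $[V_1,\Phi\cdot V_2]=[V_1,\Phi'\cdot V_2]$, unwinding the orbit relation produces $\Psi\in\Aut(G)$, $\sigma\in\mathcal B_r$, $\tau\in\mathcal B_s$ with $V_1=\Psi\sigma\cdot V_1$ and $\Phi'\cdot V_2=\Psi\tau\Phi\cdot V_2$. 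The first equality rearranges to $\Psi\cdot V_1=\sigma^{-1}\cdot V_1$, so $\Psi\in\mathcal B\Aut(G,V_1)$; setting $\varphi_2:=\Phi^{-1}\Psi^{-1}\Phi'$, a short computation using commutativity gives $\varphi_2\cdot V_2=\tau\cdot V_2$, so $\varphi_2\in\mathcal B\Aut(G,V_2)$. Since $\Phi'=\Psi\Phi\varphi_2$, this is precisely $\Phi\sim_{V_1,V_2}\Phi'$, hence $[\Phi]=[\Phi']$.

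Part (3) I expect to be a pure substitution. Writing $V_1'=(\Psi_1,\sigma_1)\cdot V_1$ and $V_2'=(\Psi_2,\sigma_2)\cdot V_2$, and taking $\Phi'\in\Aut(G)$ whose image under the left map of the triangle (the inverse of \eqref{eq: bij_amonog_quot_QAutG}) is $\Phi=\Psi_1^{-1}\Phi'\Psi_2$, I would apply $\Psi_1^{-1}$ diagonally to $(V_1',\Phi'\cdot V_2')$ and then $\sigma_1^{-1},\sigma_2^{-1}$ on the respective factors; commutativity collapses this to $(V_1,\Phi\cdot V_2)$, which yields $[V_1',\Phi'\cdot V_2']=[V_1,\Phi\cdot V_2]$ and hence the commutativity of the triangle.

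The only genuinely delicate point is the injectivity step: the hard part will be the bookkeeping, keeping careful track of which factor each braid and each automorphism acts on and repeatedly invoking the commutation of the two actions in order to recognize $\Psi$ and $\varphi_2$ as elements of $\mathcal B\Aut(G,V_1)$ and $\mathcal B\Aut(G,V_2)$ respectively. Everything else is a formal manipulation of the commuting actions and requires no further input.
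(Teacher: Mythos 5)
Your proof is correct and follows essentially the same route as the paper's: well-definedness and part (1) by sliding the braid-type automorphisms past the braids and absorbing them via the diagonal $\Aut(G)$-action, surjectivity by normalizing a representative pair to $(V_1,\Phi\cdot V_2)$ with $\Phi=\alpha^{-1}\beta$, and injectivity by unwinding the orbit relation to exhibit $\Psi\in\mathcal B\Aut(G,V_1)$ and $\Phi^{-1}\Psi^{-1}\Phi'\in\mathcal B\Aut(G,V_2)$ — precisely the paper's argument. Your explicit verification of part (3) is in fact slightly more detailed than the paper's, which dismisses it as immediate from the definition of the map \eqref{eq: bij_amonog_quot_QAutG}.
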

\begin{proof}
	\begin{enumerate}
		\item Let us consider an automorphism $\Phi'=\varphi_1\circ \Phi\circ \varphi_2$ in the same class of $\Phi$ in $Q\Aut(G)_{V_1, V_2}$, where $\varphi_1\in \mathcal B\Aut(G, V_1)$ and $\varphi_2\in \mathcal B\Aut(G, V_2)$. Then 
		\[
		\begin{split}
			[V_1, \Phi'\cdot V_2] & =[V_1, \left(\varphi_1\circ \Phi\circ \varphi_2\right)V_2] \\
			& =[\varphi_1^{-1}\cdot V_1, \left(\Phi\circ \varphi_2\right)\cdot V_2] \\
			&=[\sigma_1^{-1}\cdot V_1, \Phi\cdot (\sigma_2\cdot V_2)]\\
			& =[\sigma_1^{-1}\cdot V_1, \sigma_2\cdot \left(\Phi\cdot V_2\right)]=[V_1, \Phi\cdot V_2].
		\end{split}
		\]
		\item Point $1.$ proves that the map \ref{eq: bij_among_QAutG_and_fibre} is well-defined. Let us consider an element $[V_1', V_2']\in \pi^{-1}(x)$, hence $V_1'$ is in the same orbit of $V_1$ and $V_2'$ is in the same orbit of $V_2$. We write 
		\[
		V_1'=(\Psi_1, \sigma_1)\cdot V_1 \qquad \makebox{and} \qquad V_2'=(\Psi_2, \sigma_2)\cdot V_2,
		\]
		where $(\Psi_1, \sigma_1)\in \Aut(G)\times \mathcal B_r$, and $(\Psi_2, \sigma_2)\in \Aut(G)\times \mathcal B_s$. Then
		\[
		[V_1', V_2']=[\Psi_1\cdot V_1, \Psi_2\cdot V_2]=[V_1, \left(\Psi_1^{-1}\circ \Psi_2\right)\cdot V_2].
		\]
		This proves \eqref{eq: bij_among_QAutG_and_fibre} is surjective. 
		
		Let us consider $[\Phi_1]$ and $[\Phi_2]$ in $Q\Aut(G)_{V_1, V_2}$ such that 
		\[
		[V_1, \Phi_2\cdot V_2]=[V_1, \Phi_1\cdot V_2].
		\] 
		We are going to show that $[\Phi_2]=[\Phi_1]$. Since $(V_1, \Phi_2\cdot V_2)$ and $(V_1, \Phi_1\cdot V_2)$ share the same orbit, then there exists $(\Psi, \sigma_1, \sigma_2)\in \Aut(G)\times \mathcal B_r\times \mathcal B_s$ such that $(V_1, \Phi_2\cdot V_2)=(\Psi, \sigma_1, \sigma_2)\cdot (V_1, \Phi_1\cdot V_2)$. Then we have
		\[
		\Psi \cdot V_1=\sigma_1^{-1}\cdot V_1 \qquad \makebox{and} \qquad \left(\Phi_1^{-1}\circ\Psi^{-1}\circ \Phi_2\right)\cdot V_2=\sigma_2\cdot V_2.
		\]
		Therefore, $\varphi_1:=\Psi\in \mathcal B\Aut(G,V_1)$ and $\varphi_2:=\Phi_1^{-1}\circ\Psi^{-1}\circ \Phi_2\in \mathcal B\Aut(G,V_2)$. Finally, we have 
		\[
		\Phi_2= \varphi_1\circ \Phi_1\circ \varphi_2,
		\]
		which proves $[\Phi_2]=[\Phi_1]$, and so that \eqref{eq: bij_among_QAutG_and_fibre} is injective. 
		\item It is an immediate consequence from the definition of the map \eqref{eq: bij_amonog_quot_QAutG}. 
	\end{enumerate}
\end{proof}
Theorem \ref{thm: comp_inverse_image} gives not only a perfect enumeration  of the families of regular product-quotient surfaces corresponding to an {\bf ordered} pair of topological types of $G$-coverings of the projective line $(C_1,\phi_1)$ and $(C_2,\phi_2)$ but also how to realize these families. Indeed, given $\Psi\in \Aut(G)$, then 
$(C_1, \phi_1)$ and $(C_2,\phi_2\circ \Psi^{-1})$ define a product-quotient surface realizing an irreducible family. Theorem \ref{thm: comp_inverse_image} translates as each family given by topological types of $C_1$ and $C_2$ are obtained in this way via an automorphism of $\Aut(G)$. Furthermore, two automorphisms $\Psi_1$ and $\Psi_2$ define the same family if they are $(V_1,V_2)-$related, or equivalently if their class in $Q\Aut(G)_{V_1,V_2}$ is the same. 
\\
Thus, all families may be realized by a pair $(C_1, \phi_1)$ and $(C_2,\phi_2\circ \Psi^{-1})$ via a automorphism representative $\Psi$ for each class in $Q\Aut(G)_{V_1,V_2}$.

We consider \textbf{ordered} pairs of topological types since the Remark \ref{rem: exchanging_factors}, where we have observed that exchanging $C_1$ and $C_2$ defines an involution on $\bigcup \mathcal T^{r,s}(G)$ connecting isomorphic families. 

If we are interested in counting the families given by two different topological types of $G$-coverings, then it is sufficient to choose an order of them and then apply Theorem \ref{thm: comp_inverse_image}. 
\\
However, to enumerate the families of product-quotient surfaces associated to twice the same topological type we need to study how the exchange of the factors acts on $Q\Aut(G)_{V,V}$. 
\begin{proposition}
	The exchange of the factors acts on $Q\Aut(G)_{V,V}$ as the involution 
	\[
	Q\Aut(G)_{V,V}\to Q\Aut(G)_{V,V}, \qquad [\Phi]\mapsto [\Phi^{-1}].
	\]
\end{proposition}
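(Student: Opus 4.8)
The plan is to transport the exchange-of-factors involution through the bijection of Theorem \ref{thm: comp_inverse_image}(2). I fix the point $x=([V],[V])$, so that by that theorem the fibre $\pi^{-1}(x)$ is identified with $Q\Aut(G)_{V,V}$ via $[\Phi]\mapsto [V,\Phi\cdot V]$. First I would record that, at the level of algebraic data, exchanging the two factors $C_1$ and $C_2$ simply swaps the two spherical systems of generators: by the description of the bijection in Proposition \ref{prop: F_constants_on_orbits} together with Remark \ref{rem: exchanging_factors}, the involution $\iota$ sends the family represented by a pair $[V_1,V_2]$ to the one represented by $[V_2,V_1]$. Since $V$ and $\Phi\cdot V$ lie in the same $\Aut(G)\times\mathcal B_r$-orbit, both entries of the swapped pair still have topological type $[V]$, so $\iota$ indeed preserves the fibre $\pi^{-1}(x)$ and descends to an involution of $Q\Aut(G)_{V,V}$; checking this compatibility is the only point that needs a little bookkeeping.

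The heart of the argument is then a one-line computation using that $\Aut(G)$ acts \emph{diagonally} on $\mathcal D^r(G)\times\mathcal D^r(G)$. Starting from the family $[V,\Phi\cdot V]$ attached to $[\Phi]$, applying $\iota$ produces $[\Phi\cdot V,V]$, and applying $\Phi^{-1}$ simultaneously to both factors yields
\[
[\Phi\cdot V,\,V]=[\Phi^{-1}\cdot(\Phi\cdot V),\,\Phi^{-1}\cdot V]=[V,\,\Phi^{-1}\cdot V].
\]
Reading this back through the inverse bijection $[\Psi]\mapsto[V,\Psi\cdot V]$ shows that the image family $[V,\Phi^{-1}\cdot V]$ corresponds precisely to the class $[\Phi^{-1}]$. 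Hence $\iota$ acts on $Q\Aut(G)_{V,V}$ by $[\Phi]\mapsto[\Phi^{-1}]$, as claimed.

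Finally I would observe that this map is manifestly an involution, since $[(\Phi^{-1})^{-1}]=[\Phi]$, consistently with $\iota^2=\Id$; it is also independently well defined on double cosets, as inversion carries $\mathcal B\Aut(G,V)\,\Phi\,\mathcal B\Aut(G,V)$ to $\mathcal B\Aut(G,V)\,\Phi^{-1}\,\mathcal B\Aut(G,V)$ because $\mathcal B\Aut(G,V)$ is a subgroup. I expect no serious obstacle here: the substantive input is already contained in Theorem \ref{thm: comp_inverse_image}, and the diagonal action of $\Aut(G)$ is exactly the mechanism that converts a swap of the two factors into an inversion of the twisting automorphism. The only step demanding genuine attention is the verification in the first paragraph that $\iota$ is well defined on the relevant double cosets and truly realizes the swap of spherical systems, and this follows directly from the constructions recalled in Subsection \ref{subsec: families_of_product-quotient_surfaces}.
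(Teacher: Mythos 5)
Your proposal is correct and follows essentially the same route as the paper: the paper's (one-line) proof is exactly your central computation, namely that the exchange sends $[V,\Phi\cdot V]$ to $[\Phi\cdot V, V]$, which by the diagonal $\Aut(G)$-action equals $[V,\Phi^{-1}\cdot V]$, and then one reads the result back through the bijection of Theorem \ref{thm: comp_inverse_image}(2). Your additional verifications (that the swap preserves the fibre and that the induced map is well defined on double cosets) are sound bookkeeping that the paper leaves implicit.
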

\begin{proof}
	The exchange of the factors is a map from $\pi^{-1}([V], [V])$ to itself sending each $[V, \Phi\cdot V]$ to $[\Phi \cdot V, V]=[V, \Phi^{-1}\cdot V]$. 
\end{proof}
\begin{corollary}\label{cor: card_set_of_families}
	Let $C_1$ and $C_2$ be two $G$-coverings of $\mathbb P^1$ and let $V_1$ and $V_2$ be spherical systems of generators of them respectively. Then the cardinality of the set of families of product-quotient surfaces given by the topological types of $C_1$ and $C_2$ is equal to 
	\begin{enumerate}
		\item the cardinality of $Q\Aut(G)_{V_1, V_2}$, if $C_1$ and $C_2$ are not topological equivalent;
		\item the cardinality of $Q\Aut(G)_{V_1, V_1}/\left(\Phi\mapsto \Phi^{-1}\right)$, if $C_1$ and $C_2$ are  topological equivalent. 
	\end{enumerate}
\end{corollary}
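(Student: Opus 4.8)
The plan is to read off both cases from Theorem \ref{thm: comp_inverse_image} together with the behaviour of the factor exchange. Set $x := ([V_1],[V_2])$ and recall that the fibre $\pi^{-1}(x)$ is precisely the set of families in $\mathcal T^{r,s}(G)$ whose ordered pair of associated topological types is $([V_1],[V_2])$; by Theorem \ref{thm: comp_inverse_image} it is in bijection with $Q\Aut(G)_{V_1,V_2}$, and by Remark \ref{rem: how_change_relatin_sim_V1V2} its cardinality is independent of the chosen representatives of $V_1,V_2$ inside their orbits. The set of families ``given by the topological types of $C_1$ and $C_2$'' is then $\pi^{-1}([V_1],[V_2]) \cup \pi^{-1}([V_2],[V_1])$, counted modulo the exchange involution $\iota$ of Remark \ref{rem: exchanging_factors}: since $\iota$ carries a family to an isomorphic one, two families paired by $\iota$ must be counted once.

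First I would treat case (1), where $C_1$ and $C_2$ are not topologically equivalent, i.e. $[V_1]\neq[V_2]$. Then the two fibres $\pi^{-1}([V_1],[V_2])$ and $\pi^{-1}([V_2],[V_1])$ are disjoint, and $\iota$ restricts to a bijection between them, because it swaps the two topological types. Consequently $\iota$ has no orbit contained in a single fibre, so the quotient of $\pi^{-1}([V_1],[V_2]) \cup \pi^{-1}([V_2],[V_1])$ by $\iota$ is in bijection with either fibre alone. Hence the number of families equals $|\pi^{-1}(x)| = |Q\Aut(G)_{V_1,V_2}|$.

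For case (2), where $C_1$ and $C_2$ are topologically equivalent, I would take $V_1$ and $V_2$ in the same $\Aut(G)\times\mathcal B_r$-orbit and, using Remark \ref{rem: how_change_relatin_sim_V1V2} (compatibly with part (3) of Theorem \ref{thm: comp_inverse_image}), reduce to $V := V_1 = V_2$. Now the two fibres coincide and $\iota$ is an honest involution of the single fibre $\pi^{-1}([V],[V])$. By the preceding Proposition this involution corresponds, under the bijection $Q\Aut(G)_{V,V}\cong\pi^{-1}([V],[V])$ of Theorem \ref{thm: comp_inverse_image}, to the map $[\Phi]\mapsto[\Phi^{-1}]$. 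Therefore the families counted modulo $\iota$ are exactly the orbits of this involution, and their number is $|Q\Aut(G)_{V,V}/(\Phi\mapsto\Phi^{-1})|$.

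The only genuinely delicate point is the bookkeeping in case (1): one might be tempted to divide by two to account for the factor exchange, but the correct observation is that when $[V_1]\neq[V_2]$ the involution $\iota$ swaps two disjoint fibres rather than acting within one, so no division occurs. This is precisely the contrast with case (2), where $\iota$ does act within the single fibre and forces the quotient by $\Phi\mapsto\Phi^{-1}$; keeping these two mechanisms straight is the heart of the argument.
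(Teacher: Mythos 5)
Your proof is correct and follows essentially the same route as the paper: the paper derives the corollary by applying Theorem \ref{thm: comp_inverse_image} to a chosen ordering of the two topological types (which suffices in case (1) precisely because the exchange involution of Remark \ref{rem: exchanging_factors} swaps the two disjoint fibres), and by combining the theorem with the preceding proposition identifying the exchange of factors with $[\Phi]\mapsto[\Phi^{-1}]$ on $Q\Aut(G)_{V,V}$ in case (2). Your write-up merely makes explicit the bookkeeping that the paper leaves implicit, including the key observation that no division by two occurs when $[V_1]\neq[V_2]$.
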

Let us give an example how we use Theorem \ref{thm: comp_inverse_image} and Corollary \ref{cor: card_set_of_families}: 
\begin{example}\label{ex: PQ_group_S3xZ3square}
	Let $G=S_3\times \mathbb Z_3^2$. We are going to compute all regular product-quotient surfaces with quotient model $\left(C_1\times C_2\right)/G$ where the natural fibrations $ \lambda_1\colon C_1\to \mathbb P^1$ and $\lambda_2\colon C_2\to \mathbb P^1$ are both ramifying  over three points. 
	
	From Example \ref{ex: ssg_S3xZ3square_ramified_over_3_points}, then $C_1$ and $C_2$ are topological equivalent described by the algebraic data 
	\[
	V:=[(\tau, (1,0)), (\tau\sigma, (0,1)), (\sigma^2, (2, 2))].
	\]
	We need to compute the subgroup $\mathcal B\Aut(G, V)\leq \Aut(S_3\times \mathbb Z_3^2)$.
	
	Firstly we note that
	\[\Aut(S_3\times \mathbb Z_3^2)\cong \Aut(S_3)\times \GL_2(\mathbb Z_3).
	\]
	
	Hence every element of $\mathcal B\Aut(G, V)$ can be written as a pair $(\Psi, M)$, where $\Psi\in \Aut(S_3)$, and $M\in \GL_2(\mathbb Z_p)$.

	The action of $\mathcal{B}_3$ on $[(1,0), (0,1), (2, 2)]$ permutes its entries, since $\mathbb Z_3^2$ is abelian. Therefore, the automorphisms $M\in \GL_2(\mathbb Z_3)$ of braid type on it are those permuting its entries. Such automorphisms belong to the subgroup $\langle M_1, M_2\rangle \cong S_3$ generated by 
	\[
	M_1:=\begin{pmatrix}
		0 & 1\\
		1 & 0
	\end{pmatrix} \qquad M_2:=\begin{pmatrix}
		2 & 0\\
		2 & 1
	\end{pmatrix}.
	\]
	Let $(\Psi, M)$ be of braid type on $V$, and let $\eta$ be a braid in $\mathcal B_3$ such that $(\Psi, M)\cdot V=\eta\cdot V$.
	We observe that the signature of $V$ is $[6,6, 9]$: since the third number is different from the other two, and the automorphisms send elements in elements of the same order, then the permutation image of $\eta$ in $S_3$ fix the number three. This implies that $M$ fixes $(2,2)$, so $M \in \left\langle M_1 \right\rangle \cong \mathbb Z_2$.	 
	Therefore, 
	\[
	\mathcal B\Aut(G,V) \leq \Aut(S_3)\times \langle M_1\rangle\cong S_3\times \mathbb Z_2.
	\]
	Let us choose two generators of $\Aut(S_3)$: let $\Psi_1$ be the inner automorphism given by $\tau$ and let $\Psi_2$ be the inner automorphism of $\sigma^2$. 
	We observe that $(\Psi_1, \Id)$ and $(\Psi_2\circ \Psi_1, M_1)$ are of braid type on $V$, since they act on $V$ respectively as the braids $\sigma_1\sigma_2^2\sigma_1$ and $\sigma_1$. Since they generate the whole $ \Aut(S_3)\times \langle M_1\rangle$ then
	\[
	\mathcal B\Aut(G,V)= \Aut(S_3)\times \langle M_1\rangle\cong S_3\times \mathbb Z_2.
	\]
	Now we can compute $Q\Aut(S_3\times \mathbb Z_3^2)_{V,V}$, which as observed is the set of double cosets 
	\[
	Q\Aut(S_3\times \mathbb Z_3^2)_{V,V}=_{\mathcal B\Aut(G,V)}\backslash^{\left(\Aut(S_3)\times \GL_2 (\mathbb Z_3)\right)}/_{\mathcal B\Aut(G,V)}.
	\]
	Since $\mathcal B\Aut(G,V)=\Aut(S_3)\times \langle M_1\rangle$ contains the subgroup $\Aut(S_3)\times \lbrace 1\rbrace$, which is normal in $\Aut(S_3)\times \GL_2(\mathbb Z_3)$, then we have the following natural identification
	\begin{equation}\label{eq: identific_QAut(S3timesZpsquare)}
		Q\Aut(S_3\times \mathbb Z_3^2)_{V,V}\cong\  _{\langle M_1\rangle}\backslash^{\GL_2(\mathbb Z_3)}/_{\langle M_1\rangle}.
	\end{equation}
	More precisely, the correspondence sends $ [(\Id_{S_3},A)]\leftrightarrow [A]$. 
	
	From \eqref{eq: defn_map_pi} and Theorem \ref{thm: comp_inverse_image} we can conclude that 
	\[
	\mathcal T^{3,3}(S_3\times \mathbb Z_p^2)\cong Q\Aut(G)_{V,V}\cong \ _{\langle M_1\rangle}\backslash^{\GL_2(\mathbb Z_3)}/_{\langle M_1\rangle}.
	\]
	However, we are majorly interested to find the set of families of product-quotient surfaces given by the pair $V$, $V$. As proved in the Corollary \ref{cor: card_set_of_families}, it is sufficient to determine 
	\[
	Q\Aut(G)_{V_1, V_1}/\left(\Phi\mapsto \Phi^{-1}\right).
	\]
	
	This is the quotient of $\GL_2(\mathbb Z_p)$ by the simultaneous action of the three involutions $A\mapsto M_1A$, $A\mapsto AM_1$ and $A\mapsto A^{-1}$. These involutions generate a group of order $8$ isomorphic to a dihedral group. Hence 
	\begin{equation}\label{eq: T33(S3timesZpsquare)_bij_example}
		Q\Aut(G)_{V_1, V_1}/\left(\Phi\mapsto \Phi^{-1}\right)\cong \GL_2(\mathbb Z_3)/D_4.
	\end{equation}
	We have proved that families of regular product-quotient surfaces with quotient model $\left(C_1\times C_2\right)/G$ where $ \lambda_1\colon C_1\to \mathbb P^1$ and $\lambda_2\colon C_2\to \mathbb P^1$ are both ramifying over three points are in bijection with $\GL_2(\mathbb Z_3)/D_4$, a set of cardinality $10$. More precisely, these families are realized as follows: we consider two copies $(C_1,\phi)$, $(C_2,\phi)$ of the same curve $(C,\phi)$ defined in the Example \ref{ex: ssg_S3xZ3square_ramified_over_3_points} which is described by the algebraic data $V$. This pair of curves define a product-quotient surface realizing a first family. All the other families are realized by product-quotient surfaces each defined by a pair $(C_1,\phi)$ and $(C_2,\phi\circ (\Id, A^{-1}))$, where $A$ is a representative of a class of $\GL_2(\mathbb Z_3)/D_4$. 
\end{example}

\section{Finiteness of the classification problem 
}\label{sec: class_algorithm}
In this section we follow step-by-step the same arguments of \cite{BP12} and generalize the results of \cite[Prop. 1.14]{BP12} by removing the assumption $\chi=1$ there. 
\\
As a consequence of this, we describe an algorithm that produces for any fixed pair of natural integers $K^2$ and $\chi$ all regular product-quotient surfaces $S$ of general type with self-intersection $K^2_S=K^2$ and Euler characteristic $\chi(\mathcal O_S)=\chi$. 
\vspace{0,3cm}

Let $C_1$ and $C_2$ be two Riemann surfaces of respective genera $g_1,g_2\geq 2$ and let $G$ be a finite group acting faithfully on both of them. We consider the diagonal action of $G$ on the product $C_1\times C_2$, which gives a product-quotient surface $S$, the minimal resolution of singularities of the quotient model $X:=\left(C_1\times C_2\right)/G$. 

The singular points of the quotient model $X$ are images of points in $C_1\times C_2$ having non-trivial stabilizer by the diagonal action of $G$. Hence, $X$ has only finitely many singular points which are cyclic quotient singularities.
\\
A cyclic quotient singularity of type $\frac{1}{n}(1,a)$ is the singular point realized as the quotient of $\mathbb C^2$ by the action of the diagonal linear isomorphism of eigenvalues $\zeta_n$ and $\zeta_n^a$, with $\gcd(n,a)=1$. 
\\
We can attach to $X$ the so-called \textit{basket} of singularities:
\begin{definition}\cite[Def. 1.2]{BP12}
	A representation of the basket of singularities of $X$ is a multiset 
	\[
	\mathcal B(X):=\left\lbrace \lambda \times \left(\frac{1}{n}(1,a)\right): X \text{ has exactly } \lambda \text{ singularities of type } \frac{1}{n}(1,a)\right\rbrace.
	\]
\end{definition}
We use the word "representation" since $X$ may have several representatives of its basket, essentially since a singularity of type $\frac{1}{n}(1,a)$ is isomorphic to a singularity of type $\frac{1}{n}(1,a')$, where either $a=a'$ or $aa'\equiv 1$ mod $n$. 

In \cite{BP12} the authors used the minimal resolution of a cyclic quotient singularity as \textit{Hirzebruch-Jung string} to compute the correction terms to the self-intersection  and the topological characteristic of the product-quotient surface $S$. 
\\
We need to remind these these correction terms.
\begin{definition}(\cite[Definition 1.5]{BP12})
	Let $x$ be a singularity of type $\frac{1}{n}(1,a)$ with $\gcd(n,a)=1$, and let $1\leq a'<n$ be the inverse of $a$ modulo $n$, $a'=a^{-1}$. Write $\frac{n}{a}$ as a continued fraction 
	\[
	\frac{n}{a}=b_1-\frac{1}{b_2-\frac{1}{
			b_3-\dots}}=[b_1,\dots, b_l]
	\]
	We define the following correction terms
	\begin{itemize}
		\item $k_x:=k(\frac{1}{n}(1,a))=-2+\frac{2+a+a'}{n}+\sum_{i=1}^l(b_i-2)\geq 0$;
		\item $e_x:=e(\frac{1}{n}(1,a))=l+1-\frac{1}{n}\geq 0$;
		\item $B_x:=2e_x+k_x$. 
	\end{itemize}
	Let $\mathcal{B}$ be the basket of singularities of $X$. Then we denote by 
	\[
	k(\mathcal B):=\sum_x k_x, \qquad e(\mathcal B):=\sum_x e_x, \qquad B(\mathcal{B}):=\sum_x B_x.
	\]
\end{definition}
\begin{theorem}(\cite[Prop. 1.6 and Cor. 1.7]{BP12})\label{thm: invariants_PQ}
	Let  $\rho \colon S\to X$ be the minimal resolution of singularities of $X=(C_1\times C_2)/G$. 
	Then the self-intersection of the canonical divisor of $S$ and its topological Euler characteristic are equal to 
	\[
	K_S^2 =\frac{8(g_1-1)(g_2-1)}{\vert G\vert}-k(\mathcal{B}),\qquad \makebox{and} \qquad  	e(S) = \frac{4(g_1-1)(g_2-1)}{\vert G\vert}+ e(\mathcal B).
	\]
	Furthermore, it holds 
	\[
	K^2_S=8\chi(\mathcal O_S)-\frac{1}{3}B(\mathcal{B}).
	\]
\end{theorem}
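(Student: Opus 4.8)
The plan is to compute the two numerical invariants $K_S^2$ and $e(S)$ by first reading them off the smooth product $Y:=C_1\times C_2$, then descending along the quotient map $q\colon Y\to X$ of degree $\vert G\vert$, and finally correcting by the minimal resolution $\rho\colon S\to X$. On the product one has $K_Y=p_1^*K_{C_1}+p_2^*K_{C_2}$, so that $K_Y^2=2\deg K_{C_1}\cdot\deg K_{C_2}=8(g_1-1)(g_2-1)$, while multiplicativity of the topological Euler characteristic gives $e(Y)=e(C_1)e(C_2)=4(g_1-1)(g_2-1)$. Because the action is faithful on each factor, every nontrivial $g\in G$ has finite fixed locus $\Fix_{C_1}(g)\times\Fix_{C_2}(g)$ on $Y$; hence the fixed points are isolated, $X$ has only isolated cyclic quotient singularities, and $q$ is \'etale in codimension one.

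For $K_S^2$ I would first descend the canonical class. Since $q$ is unramified in codimension one, $q^*K_X=K_Y$ as $\QQ$-Cartier divisors ($K_X$ being $\QQ$-Cartier since $X$ has quotient singularities), and the projection formula for the degree-$\vert G\vert$ map yields $\vert G\vert\, K_X^2=(q^*K_X)^2=K_Y^2$, so $K_X^2=8(g_1-1)(g_2-1)/\vert G\vert$. Writing $K_S=\rho^*K_X+\Delta$ with $\Delta=\sum_i a_iE_i$ supported on the exceptional locus and using $\rho^*K_X\cdot E_i=0$, one gets $K_S^2=K_X^2+\Delta^2$. The contribution $\Delta^2$ is purely local: over a singularity of type $\tfrac1n(1,a)$ the exceptional divisor is the Hirzebruch--Jung chain $E_1,\dots,E_l$ with $E_i^2=-b_i$, $E_iE_{i+1}=1$ and $n/a=[b_1,\dots,b_l]$, and adjunction on each rational curve gives $K_S\cdot E_i=b_i-2$, whence $\Delta^2=\Delta\cdot K_S=\sum_i a_i(b_i-2)$. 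Solving the linear system $\Delta\cdot E_j=b_j-2$ for the discrepancies $a_i$ and simplifying the resulting continued-fraction expression is exactly what produces the closed form $-\Delta^2=k_x=-2+\tfrac{2+a+a'}{n}+\sum_i(b_i-2)$; summing over the basket gives $K_S^2=K_X^2-k(\mathcal B)$.

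For $e(S)$ I would argue by additivity and multiplicativity of $e$. Removing the finitely many fixed points, $G$ acts freely, so $e(Y)=\vert G\vert\,e(X\setminus\Sing X)+\sum_p\vert G\vert/n_p$, where $n_p$ is the order of the local group at the singular point $p$; equivalently $e(X)=e(Y)/\vert G\vert+\sum_p(1-1/n_p)$. Resolving replaces each singular point by its Hirzebruch--Jung chain, a chain of $l_p$ rational curves of Euler characteristic $l_p+1$, so $e(S)=e(X)+\sum_p l_p$. Combining the two identities gives $e(S)=e(Y)/\vert G\vert+\sum_p(l_p+1-1/n_p)=4(g_1-1)(g_2-1)/\vert G\vert+e(\mathcal B)$, since the local term is precisely $e_x=l+1-\tfrac1n$.

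Finally, the third identity is a formal consequence of the first two together with Noether's formula $12\,\chi(\mathcal O_S)=K_S^2+e(S)$. Setting $P:=(g_1-1)(g_2-1)/\vert G\vert$, Noether gives $12\chi=12P-k(\mathcal B)+e(\mathcal B)$, hence $8P=8\chi+\tfrac23\bigl(k(\mathcal B)-e(\mathcal B)\bigr)$; substituting into $K_S^2=8P-k(\mathcal B)$ and using $B(\mathcal B)=2e(\mathcal B)+k(\mathcal B)$ collapses everything to $K_S^2=8\,\chi(\mathcal O_S)-\tfrac13 B(\mathcal B)$. The only genuinely nontrivial step is the local discrepancy computation yielding the closed form for $k_x$; everything else is bookkeeping with the projection formula, additivity of $e$, and Noether. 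The main obstacle is therefore the continued-fraction arithmetic of the Hirzebruch--Jung resolution, which I would either carry out by induction on the chain length $l$ or invoke from the standard theory of cyclic quotient singularities.
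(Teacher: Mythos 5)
Your proposal is correct, and the paper itself offers no proof of this statement: it is quoted verbatim from \cite[Prop. 1.6 and Cor. 1.7]{BP12}, where the argument is essentially the one you give — compute $K^2$ and $e$ on $C_1\times C_2$, descend through the quotient map (étale in codimension one, so $q^*K_X=K_Y$ and the projection formula applies), correct by the Hirzebruch--Jung resolution where the discrepancy and exceptional-chain contributions are exactly $k_x$ and $e_x$, and obtain the last identity from Noether's formula. The only step you leave to "standard theory" — the continued-fraction identity $-\Delta^2=k_x$ — is indeed the same local computation invoked in the cited reference, so nothing is missing in substance.
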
 
From now on we shall restrict to product-quotient surfaces $S$ of general type that are regular, namely $C_i/G\cong \mathbb P^1$. 
\\
As supposed from the previous Subsection \ref{subsec: families_of_product-quotient_surfaces}, we shall describe $S$ in a pure algebraic way by using a pair of spherical systems of generators 
\[
[g_1,\dots, g_r] \qquad \makebox{and} \qquad [h_1,\dots, h_s]
\]
of the pair of $G$-coverings $C_1$ and $C_2$ of $\mathbb P^1$. 
\begin{remark}
	In \cite[Subsec. 1.2]{BP12} is shown how to determine the number of cyclic quotient singularities (and their types) of the quotient model $X=\left(C_1\times C_2\right)/G$ from the algebraic data of a pair of spherical systems of generators. 
	
	In this way, we read the basket of singularities of $S$ form the pair $[g_1, \dots, g_r]$ and $[h_1,\dots, h_s]$, and then determine the invariants $K^2_S$ and $\chi(\mathcal O_S)$ by using Theorem \ref{thm: invariants_PQ}. 
\end{remark}
Finally, we states the preliminaries to extend \cite[Prop. 1.14]{BP12} to any natural integer $\chi$. 
\begin{definition}
	Fix an $r$-tuple of natural numbers $t:=[m_1, \dots, m_r]$, and a basket of singularities $\mathcal B$. Then we associate to these the following numbers:
	\[
	\begin{split}
		\Theta(t) & :=-2+\sum_{i=1}^r\left(1-\frac{1}{m_i}\right); \\
		\alpha(t, \mathcal{B}) &:= \frac{12\chi+ k(\mathcal{B})-e(\mathcal{B})}{6\Theta(t)}.
	\end{split}
	\]
\end{definition}
We recall
\begin{definition}
	The minimal positive integer $I_x$ such that $I_xK_X$ is Cartier in $x$ is called the \textit{index} of the singularity $x$. 
	
	The index of $X$ is the minimal positive integer $I$ such that $I$ is Cartier. In particular, $I=\text{lcm}_{x\in \Sing X}I_x$.  
\end{definition}
It is well known that the index of a cyclic quotient singularity $\frac{1}{n}(1,a)$ is 
\[
I_x=\frac{n}{\gcd(n, a+1)}. 
\]
By Lemma \cite[Lem. 1.10]{BP12}, fixed a pair of natural integers $(K^2,\chi)$, there are only finitely many basket of singularities $\mathcal B$ for which there exists a product-quotient surface $S$ with invariants $K_S^2=K^2$, $\chi(\mathcal O_S)=\chi$, and having a quotient model with a representation of the basket of singularities equal to $\mathcal B$.

We need to extend \cite[Prop. 1.14]{BP12} to any natural integer $\chi$ to bound, for fixed $K^2$, $\chi$, and $\mathcal B$, the possibilities for 
\begin{itemize}
	\item $\vert G\vert$;
	\item $t_1:=[m_1, \dots, m_r]$,
	\item $t_2:=[n_1, \dots, n_s]$,
\end{itemize}
of a product-quotient surface $S$ with $K_S^2=K^2$, $\chi(S)=\chi$, and basket of singularities of the quotient model $X=\left(C_1\times C_2\right)/G$ equal to $\mathcal B$ such that the pair of spherical systems of generators of $C_1$ and $C_2$ have respectively signature $t_1$ and $t_2$. 
\begin{proposition}\label{prop: fixed_Ksquare_and_chi_get_finitely_many}
	Fix $(K^2, \chi)\in \mathbb Z\times \mathbb Z$, and fix a possible basket of singularities $\mathcal B$ for $(K^2, \chi)$. Let $S$ be a product-quotient surface $S$ of general type such that 
	\begin{itemize}
		\item[i.] $K_S^2=K^2$;
		\item[ii.] $\chi(S)=\chi$;
		\item[iii.] the basket of singularities of the quotient model $X=\left(C_1\times C_2\right)/G$ equals $\mathcal B$. 
	\end{itemize}
	Then 
	\begin{itemize}
		\item[a)] $g(C_1)=\alpha(t_2, \mathcal B)+1$, $g(C_2)=\alpha(t_1, \mathcal B)+1$;
		\item[b)] $\vert G\vert =\frac{8\alpha(t_1, \mathcal B)\alpha(t_2, \mathcal B)}{K^2+k(\mathcal{B})}$;
		\item[c)] $r,s \leq \frac{K^2+k(\mathcal{B})}{2}+4$;
		\item[d)] $m_i$ divides $2\alpha(t_1, \mathcal B)I$, $n_j$ divides $2\alpha(t_2, \mathcal B)I$;
		\item[e)] there are at most $\vert \mathcal B\vert /2$ indices $i$ such that $m_i$ does not divide $\alpha(t_1, \mathcal B)$, and similarly for the $n_j$;
		\item[f)] $m_i\leq \frac{1+I\frac{K^2+k(\mathcal{B})}{2}}{f(t_1)}$, $n_i\leq \frac{1+I\frac{K^2+k(\mathcal{B})}{2}}{f(t_2)}$, where $I$ is the index of $X$, and $f(t_1):=\max(\frac{1}{6}, \frac{r-3}{2})$, $f(t_2):=\max(\frac{1}{6}, \frac{s-3}{2})$; 
		\item[g)] except for at most $\vert \mathcal B\vert/2$ indices $i$, the sharper inequality $ \ \ \ \ \ \ \ \ \ \ \ $ $m_i\leq \frac{1+\frac{K^2+k(\mathcal{B})}{4}}{f(t_1)}$ holds, and similarly for the $n_j$.
	\end{itemize}
\end{proposition}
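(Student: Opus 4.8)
The plan is to imitate the proof of \cite[Prop.~1.14]{BP12}, whose only structural modification here is the replacement of the constant $12$ by $12\chi$ in the numerator of $\alpha$; the task is to verify that this single change propagates consistently through every relation. I would start by rewriting the Hurwitz formula \eqref{eq: Hurwitz_formula} for the two coverings $C_1,C_2$ in the compact form
\[
g_i-1=\tfrac12\,\lvert G\rvert\,\Theta(t_i),\qquad i=1,2,
\]
so that $(g_1-1)(g_2-1)=\tfrac14\lvert G\rvert^2\,\Theta(t_1)\Theta(t_2)$. Feeding this into the first identity of Theorem \ref{thm: invariants_PQ} yields the master relation $K^2+k(\mathcal B)=2\lvert G\rvert\,\Theta(t_1)\Theta(t_2)$, whence $g_2-1=\tfrac{K^2+k(\mathcal B)}{4\Theta(t_1)}$ (and symmetrically for $g_1$). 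To obtain (a) I would check that this equals $\alpha(t_1,\mathcal B)=\tfrac{12\chi+k(\mathcal B)-e(\mathcal B)}{6\Theta(t_1)}$: cancelling $\Theta(t_1)$, this is the numerical identity $3(K^2+k)=2(12\chi+k-e)$, i.e. $3K^2=24\chi-(2e+k)$, which is precisely three times the third formula of Theorem \ref{thm: invariants_PQ} once one writes $B(\mathcal B)=2e(\mathcal B)+k(\mathcal B)$.

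Statement (b) is then immediate: substituting $g_1-1=\alpha(t_2,\mathcal B)$ and $g_2-1=\alpha(t_1,\mathcal B)$ into $K^2+k(\mathcal B)=\tfrac{8(g_1-1)(g_2-1)}{\lvert G\rvert}$ and solving for $\lvert G\rvert$ gives the stated value. For (c) I would combine the lower bound $\Theta(t_1)\ge \tfrac r2-2$, valid because every $m_i\ge2$, with the upper bound $\Theta(t_1)=\tfrac{K^2+k(\mathcal B)}{4(g_2-1)}\le \tfrac{K^2+k(\mathcal B)}4$ coming from (a) together with $g_2\ge2$ (the surface is of general type); this at once produces $r\le \tfrac{K^2+k(\mathcal B)}2+4$, and the bound for $s$ is symmetric.

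The remaining items are geometric, and rest on three inputs: $m_i=o(g_i)\mid\lvert G\rvert$ by Lagrange; the cyclic group $\langle g_i\rangle\le G$ acting on $C_2$, with its Riemann--Hurwitz relation $2(g_2-1)=m_i(2g_{2,i}-2)+R_i$, where $g_{2,i}$ is the genus of $C_2/\langle g_i\rangle$; and the index formula $I_x=n/\gcd(n,a+1)$ for a singularity $\tfrac1n(1,a)$. For (d) the ramification $R_i$ of $\langle g_i\rangle$ on $C_2$ is supported at points whose stabilizers give rise to the cyclic quotient singularities of $X$, so multiplying by the index $I$ clears the resulting denominators and yields $m_i\mid 2\alpha(t_1,\mathcal B)\,I$. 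Statement (e) refines this: $m_i$ divides $\alpha(t_1,\mathcal B)=g_2-1$ outright \emph{unless} $\langle g_i\rangle$ fixes points on $C_2$, and each such occurrence is charged to a distinct element of the basket, bounding the exceptional indices by $\lvert\mathcal B\rvert/2$. Items (f) and (g) convert (d) into effective upper bounds on the individual $m_i$: since $\alpha(t_1,\mathcal B)=\tfrac{K^2+k(\mathcal B)}{4\Theta(t_1)}$, a lower bound $\Theta(t_1)\ge f(t_1)$ — the explicit estimate of \cite{BP12}, with $f(t_1)=\max(\tfrac16,\tfrac{r-3}2)$ measuring the minimal contribution of the remaining ramification — bounds $\alpha(t_1,\mathcal B)$ from above, and combined with $m_i\mid 2\alpha(t_1,\mathcal B)I$ this gives (f); setting aside the at most $\lvert\mathcal B\rvert/2$ singular contributions sharpens it to (g).

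I expect steps (a)--(c) to be routine algebra, strictly parallel to \cite{BP12}. The genuine obstacle lies in (d), (e) and (g): one must set up precisely the correspondence between the fixed points of $\langle g_i\rangle$ on $C_2$ — which arise from the common ramification of the two $G$-coverings — and the cyclic quotient singularities recorded in $\mathcal B$, verifying both that the index $I$ exactly compensates the denominators in (d) and that each failure of clean divisibility, or of the sharper bound, is charged to a \emph{distinct} singular point, so that the count of exceptional indices is truly $\le\lvert\mathcal B\rvert/2$. This bookkeeping is the part of \cite{BP12} that must be reproduced with care, though the passage from $\chi=1$ to arbitrary $\chi$ affects only the numerator of $\alpha$ and leaves this geometric core unchanged.
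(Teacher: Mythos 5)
Your proposal is correct and follows essentially the same route as the paper: the only genuinely new content is item (a), which you establish — just as the paper does — by combining the Hurwitz formula with the two identities of Theorem \ref{thm: invariants_PQ} (your identity $3K^2=24\chi-B(\mathcal B)$ is exactly the paper's computation $\Theta(t_1)\alpha(t_1,\mathcal B)=\tfrac{K^2+k(\mathcal B)}{4}$ read in the opposite direction), while (b)--(g) are deferred to the arguments of \cite{BP12}, which is precisely what the paper itself does.
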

\begin{remark}
	Note that $b)$ shows $t_1$ and $t_2$ determines the order of $G$. $c)$ and $f)$ implies there are only finitely many possibilities for the signatures $t_1, t_2$. Instead, $d), e)$, and $g)$ are strictly necessary to obtain an efficient algorithm. 
\end{remark}
\begin{proof}
	It is sufficient to prove $a)$ since the other points have the same proof as \cite{BP12}. 
	From Theorem \ref{thm: invariants_PQ}, then 
	\[
	\begin{split}
		\Theta(t_1)\alpha(t_1,\mathcal B)= & \frac{1}{2}\frac{24\chi+2k(\mathcal{B})-2e(\mathcal{B})}{6}  
		=\frac{24\chi-B(\mathcal{B})+3k(\mathcal B)}{6} \\ & =3\frac{8\chi -\frac{B(\mathcal B)}{3}+k(\mathcal B)}{12}  =\frac{K^2+k(\mathcal B)}{4},
	\end{split}
	\]
	and then by the Theorem \ref{thm: invariants_PQ} and Hurwitz' formula \ref{eq: Hurwitz_formula}, we have 
	\begin{multline*}
		\alpha(t_1, \mathcal B)=\frac{K^2+k(\mathcal B)}{4\Theta(t_1)}=\frac{8(g(C_1)-1)(g(C_2)-1)}{4\vert G\vert \left(-2+\sum_{i=1}^r\left(1-\frac{1}{m_i}\right)\right)}\\ =\frac{8(g(C_1)-1)(g(C_2)-1)}{4(2g(C_1)-2)}.
	\end{multline*}
\end{proof}
\subsection{Description of the classification algorithm}\label{subsec: descr_implem_class_algorithm}
Fixed a pair $(K^2, \chi)\in \mathbb N\times \mathbb N$, the next goal is to write a MAGMA script to find all minimal regular surfaces $S$ of general type with $K^2_S=K^2$, and $\chi(S)=\chi$, which are product-quotient surfaces. 
A commented version of the MAGMA code is available here:
\\
\url{https://github.com/Fefe9696/PQ_Surfaces_with_fixed_Ksquare_chi}

We describe here the strategy, and explain how the most important scripts work. Most of the scripts are modification of those in \cite{BP12}. Since those scripts were written under the assumption $\chi=1$, we generalize all of  them to allow any value of $\chi$. In the Introduction \ref{sec: Introduction} of the present paper we indicate the other main improvements we did. 
\\
We fix a couple $(K^2, \chi)$. Note that by minimality of $S$, and by Theorem \ref{thm: invariants_PQ}, then $K^2\in \lbrace 1, \dots, 8\chi\rbrace $, and the case $K^2=8\chi$ corresponds to surfaces isogenous to a product. 
\\
{\bf Step 1:} The script {\bf Baskets} lists all the \textit{possible basket of singularities} $\mathcal B$ for $(K^2, \chi)$. Indeed, there are only finitely many of them by \cite[Lem. 1.10]{BP12}. The input is $B(\mathcal B)=3(8\chi-K^2)$, so to get for instance all baskets for $(K^2, \chi)=(28, 4)$, we need $\textit{Basket}(12)$. 
\\
{\bf Step 2:} From the Proposition \ref{prop: fixed_Ksquare_and_chi_get_finitely_many} once we know the basket of singularities of $X=\left(C_1\times C_2\right)/G$, then there are finitely many possible signatures of a pair of spherical systems of generators of $C_1$ and $C_2$. {\bf ListOfTypes} computes them using the inequalities in the Proposition \ref{prop: fixed_Ksquare_and_chi_get_finitely_many}. Here the input is $K^2$, and $\chi$, so \textit{ListOfTypes} first computes \textit{Baskets($3(8\chi-K^2)$)}, and then computes for each basket all numerically compatible signatures. The output is a list of pairs, the first element of each pair being a basket, and the second element being the list of all signatures compatible with that basket.
\\ 
{\bf Step 3:} Every surface produces two signatures, one for each curve $C_i$, both compatible with the basket of singularities of $X$; if we know the signatures and the basket, then Proposition \ref{prop: fixed_Ksquare_and_chi_get_finitely_many} $b)$ tells us the order of $G$. {\bf ListGroups}, whose input is $K^2$, and $\chi$, first computes \textit{ListOfTypes$(K^2, \chi)$}. Then for each pair of signatures in the output, it calculates the order of the group. Next it searches for the groups of given order  which admit appropriate spherical systems of generators corresponding to both signatures. Here we use the database in  \cite{CGP23} if we are in one of the cases classified there, otherwise we use the function \textit{FindGenerators} developed in the work \cite{CGP23}.
\\
For each affirmative answer, it stores the triple (basket, pair of signatures, group) in a list, which is the main output. 

The script has some shortcuts:
\begin{itemize}
	\item Let $t_1$ and $t_2$  be the pair of signatures and let $\mathbb T(t_1)$ and $\mathbb T(t_2)$ be their respective orbifold groups (see the Remark \ref{rem: orbifold_morphism}). 
	Then the order of the abelianization $G^{ab}$  of $G$ has to divide the order the abelianization of  $\mathbb T(t_1)$ and $\mathbb T(t_2)$:
	\begin{equation}\label{eq: cond_abelianization_orders}
		\vert G^{ab}\vert \quad \makebox{divides} \quad  \vert \mathbb T(t_1)^{ab}\vert, \vert\mathbb T(t_2)^{ab}\vert .
	\end{equation}
	Indeed, the orbifold (surjective) homomorphisms $\mathbb T(t_1)\to G$ and $\mathbb T(t_2)\to G$ extend to surjective homomorphisms 
	\[
	\mathbb T(t_1)^{ab}  \to G^{ab}, \qquad \mathbb T(t_2)^{ab} \to G^{ab}.
	\]
	Hence  \textit{ListGroups} checks first if  $G$ satisfies \eqref{eq: cond_abelianization_orders}: if not, this case not occur. 
	\item If the pair of signatures $t_1$ and $t_2$ admits orbifold groups 
	$\mathbb T(t_1)$ and $\mathbb T(t_2)$ such that the orders of their abelianization are coprime numbers, then $G$ is forced to be a perfect group.
	This follows directly from the condition \eqref{eq: cond_abelianization_orders}.
	
	MAGMA knows all perfect groups of order $\leq 50000$, and then \textit{ListGroups} checks first if there are perfect groups of the right order: if not, this case can not occur. 
	\item If:
	\begin{itemize}
		\item[-] either the expected order of the group is $1024$ or bigger than $2000$, since MAGMA does not have a list of the finite groups of this order;
		\item[-] or the order is a number as \textit{e.g.}, $1728$, where there are too many isomorphism classes of groups;
	\end{itemize}
	then \textit{ListGroups} just stores these cases in  a list, secondary output of the script. These "exceptional" cases have to be considered separately. 
\end{itemize}
{\bf Step 4:} The basket of singularities of a surface described by a couple of spherical systems $[g_1, \dots, g_r]$ and $[h_1, \dots, h_s]$ depends only by the conjugacy classes of $g_i$ and $h_j$, from \cite[Rem. 4.7.2]{fede3}.
{\bf ExistingSurfaces} runs on the output of \textit{ListGroups}($K^2, \chi$), and throws away all triples giving rise only to surfaces whose singularities do not correspond to the basket. 
\\
{\bf Step 5:} Each triple (basket, pair of signatures, group) in the output \textit{ExistingSurfaces}$(K^2, \chi)$ gives many different pairs of compatible spherical systems of generators. On them there is the action of $ \Aut(G)\times \mathcal B_r\times \mathcal B_s$ described in Subsection \ref{subsec: families_of_product-quotient_surfaces}. Therefore, {\bf FindSurfaces} uses Theorem \ref{thm: comp_inverse_image} and Corollary \ref{cor: card_set_of_families} to pick up only one pair of spherical systems of generators for any family of product-quotient surfaces compatible with the triple (basket, pair of signatures, group).   Thus, the output is a list of (basket, sph1, sph2, group), where sph1 and sph2 are spherical systems of group compatible with pair of signatures and basket. 
\section{Regular product-quotient surfaces with $23\leq K^2\leq 32$ and $\chi=4$.}\label{sec: class_chi=4}
In this section we prove the main Theorem \ref{thm: intro_teo_classif_pg=3} presented in the introduction. 

We have run the function \textit{FindSurfaces} described in the previous Subsection \ref{subsec: descr_implem_class_algorithm} on each triple of the output of \textit{ExistingSurfaces}$(K^2,\chi)$, where $K^2\in \{23,\dots, 32\}$ and $\chi=4$. This has given all the families in tables \ref{table: class_PQ_pg=3} to \ref{table: class_PQ_pg=3_excep_minimality} of the appendix with the only exception of families $no.$ $267$ and $544$, which are the only cases occurred on those 
skipped by $ListGroups$ and stored in its secondary output. 

Thus, to prove the main Theorem \ref{thm: intro_teo_classif_pg=3} it remains to show that 
\begin{enumerate}
	\item among all the exceptional cases skipped by \textit{ListGroups}, only two cases occur, which are $no.$ $267$ and $544$;
	\item all the obtained families of tables  \ref{table: class_PQ_pg=3} to  \ref{table: class_PQ_pg=3_excep_minimality} are of general type and those on tables \ref{table: class_PQ_pg=3} to  \ref{table: class_PQ_pg=3_12} are also minimal. 
\end{enumerate}
\vspace{0,3cm}
This will be the content of the next two subsections \ref{subsec: excep_cases} and \ref{subsec: rational_curves_on_PQ}. 

\subsection{The exceptional cases}\label{subsec: excep_cases}
For each $K^2\in\{23,\dots, 32\}$, the list of cases skipped by \textit{ListGroups}($K^2,4)$ and stored in its secondary output 
can be found here:
\\
\url{https://github.com/Fefe9696/PQ_Surfaces_with_fixed_Ksquare_chi}

The main theorem of this subsection is the following: 
\begin{theorem}\label{thm: main_thm_excp_cases}
	There are exactly two groups $G$ admitting an appropriate pair of spherical systems of generators compatible with one of the triples of the secondary output of $ListGroups$($K^2,4$), for $K^2\in \{23,\dots, 32\}$: 
	\begin{table}[h]
		\centering
		\begin{tabular}{| c|c| c |c|c| l| c|}
			\hline
			$no.$ &	 $K^2_S$ & Sing($X$) & $t_1$ & $t_2$ & $G$ & $N$ \\
			\hline
			\hline
			$267$ &26&  $1/4, 1/2^2, 3/4$& $3^2,4$ &	$3^2,4$ & $G(1944,3875)$ &  2 \\ 
			$ 544 $ &24& $ 1/6, 1/2^2, 5/6$ & $ 2, 4, 6  $ & $ 2, 6, 8 $ & $ G(768, 1086051) $ & 2\\
			\hline
		\end{tabular}
	\caption{} \label{table: cases_occur_skipped_ListGroups}
\end{table}
\end{theorem}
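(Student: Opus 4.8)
The plan is to read Theorem \ref{thm: main_thm_excp_cases} as a finite verification carried out over the explicit list of triples produced as the secondary output of $ListGroups(K^2,4)$ for $K^2\in\{23,\dots,32\}$. By construction these are exactly the triples (basket $\mathcal B$, ordered pair of signatures $(t_1,t_2)$, expected group order) whose order equals $1024$, exceeds $2000$, or is one of the orders with an unwieldy number of isomorphism types (such as $768$ or $1944$). For each such triple the order $|G|$ is already pinned down by Proposition \ref{prop: fixed_Ksquare_and_chi_get_finitely_many}(b), so the only remaining question is whether there exists a finite group $G$ of that order carrying a pair of spherical systems of generators of signatures $t_1$ and $t_2$ whose induced basket is precisely $\mathcal B$.

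First I would run the two cheap necessary conditions already present in the algorithm to prune the list. The abelianization divisibility \eqref{eq: cond_abelianization_orders} forces $|G^{ab}|$ to divide both $|\mathbb T(t_1)^{ab}|$ and $|\mathbb T(t_2)^{ab}|$, which alone should eliminate a large fraction of the skipped triples; for example any signature whose orbifold group has abelianization of order $3$ is incompatible with a $2$-group of order $1024$. The perfect-group test (MAGMA classifies perfect groups of order $\le 50000$) disposes of the remaining cases whose forced order admits no group with small enough abelianization other than a perfect one.

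The key structural observation for the two survivors is that they both have $r=s=3$: here $G$ must be a common finite quotient of two triangle groups, namely $\Delta(3,3,4)$ twice for $no.267$ and $\Delta(2,4,6)$ and $\Delta(2,6,8)$ for $no.544$, with the orders $1944=2^3\cdot 3^5$ and $768=2^8\cdot 3$ forcing $G$ to be solvable. Rather than sweeping over the ambient list of groups of these orders, I would realize the candidate $G$ directly as a quotient of the relevant finitely presented orbifold group and enumerate all its quotients of the prescribed order using the solvable-quotient (respectively low-index-normal-subgroup) machinery. For each quotient produced one checks, by the recipe of \cite[Subsec. 1.2]{BP12}, that a compatible pair of spherical systems exists and that the resulting basket equals $\mathcal B$; Corollary \ref{cor: card_set_of_families} then counts the families, yielding $N=2$ in each case. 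The same quotient search applied to every remaining skipped triple — treating longer signatures via the corresponding polygonal orbifold groups — must return no group of the required order, which closes the non-existence half.

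The main obstacle is exactly what forced these cases to be set aside: for orders such as $768$ and $1944$ the groups do lie in the small-group library, but their sheer number (over a million for $768$) makes the naive search infeasible, while for order $1024$ and for orders above $2000$ no library exists at all. In both regimes the delicate point is to make the enumeration genuinely \emph{exhaustive}, so that one proves no further group slips through rather than merely exhibiting the two that work. For the solvable target orders this is achievable because the solvable-quotient algorithm on a finitely presented orbifold group, run to the bound on the derived length implied by the order, provably returns every quotient of that order; the care lies in verifying that each skipped triple has been reduced to such a terminating search and that, for the longer-signature triples, the abelianization and order constraints already preclude any solution.
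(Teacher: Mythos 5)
Your skeleton for the two cases that actually occur is sound and genuinely different from the paper's route: since $1944=2^3\cdot 3^5$ and $768=2^8\cdot 3$, Burnside's theorem does force solvability, so an exhaustive solvable-quotient enumeration over $\mathbb T(3,3,4)$, respectively $\mathbb T(2,4,6)$ and $\mathbb T(2,6,8)$, is a legitimate alternative to what the paper does for these orders (a sweep of the small-groups database via \textit{HowToExclude}, using the tools of \cite{CGP23}, which for orders like $768$ means confronting over a million isomorphism classes); counting the families by Corollary \ref{cor: card_set_of_families} is then correct.

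The gap is in the non-existence half. Your claimed dichotomy --- solvable-forced orders are settled by the solvable-quotient machinery, and all remaining skipped triples ``are precluded by the abelianization and order constraints'' --- is false. Take triple $no.20$ of Table \ref{tab: tabellaA}: signatures $[2,3,10]$ and $[2,4,6]$ with $\vert G\vert=2880=2^6\cdot 3^2\cdot 5$. This order is not of the form $p^aq^b$, so solvability is not forced (non-solvable groups of order $2880$ with abelianization $\mathbb Z_2$ exist, e.g. $\SL(2,5)\times S_4$), and by Remark \ref{rem: abelinizations_of_the_list} the condition \eqref{eq: cond_abelianization_orders} only pins $G^{ab}$ down to being trivial or $\mathbb Z_2$, so it does not preclude existence either. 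The same failure occurs for $no.23$ (order $2640$), $no.28$ ($4800$), $no.33$ ($7680$), $no.34$ ($3360$), $no.54$ ($5760$), and many others. For all of these, your proposal has to fall back on a low-index-normal-subgroup enumeration at index in the thousands inside a hyperbolic triangle group, whose exhaustiveness and feasibility you neither establish nor may take for granted.

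This is exactly where the paper's key tool enters, and it is absent from your proposal: Proposition \ref{prop: intermediate_quotients}. By the abelianization analysis, any surviving $G$ has a normal subgroup $H$ of prime index $2$ or $3$ (the commutator subgroup, or a preimage of an index-$p$ subgroup of $G^{ab}$ as in Remark \ref{rem: rem_group_theory}), and Proposition \ref{prop: intermediate_quotients} computes the possible signatures of the induced $H$-action. The order of $H$ then drops into the range of the small-groups database, where \textit{HowToExclude} and \textit{HowToExcludePG} apply; if candidates for $H$ survive, one reconstructs $G$ as a semidirect product $H\rtimes_{\phi}\mathbb Z_p$ and excludes it by comparing abelianizations or running \textit{ExistingSurfaces}, descending a second time to $G''$ where needed (cases $no.50,51,52,54$ and $no.65$). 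Without this descent-and-reconstruction mechanism, or a proof that index-several-thousand normal-subgroup enumeration terminates for every remaining triple, your argument does not close the non-existence half of the theorem.
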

A proof of this theorem can be found in the .txt files attached to this paper, one for each $K^2\in\{23,\dots, 32\}$. More precisely, in these files we provide a step-by-step proof of how to exclude the cases omitted by $ListGroups$ until we encounter the only two cases above that actually occur.

However, to illustrate the main strategy we have employed to exclude these cases, here we only discuss those with $K^2=32$, which already consist of a significant list of 152 cases. Therefore, we need to prove
\begin{theorem}\label{thm: skipped_cases_Ksquare_32}
	 No one of the cases skipped by $ListGroups(32, 4)$ gives a product-quotient surface $S$ with $K^2_S=32$ and $\chi(\mathcal O_S)=4$.  
\end{theorem}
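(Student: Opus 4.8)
The plan is to exploit that $K^2=32=8\chi$ forces, via Theorem \ref{thm: invariants_PQ}, the basket to be empty: from $K^2=8\chi-\tfrac13 B(\mathcal B)$ one gets $B(\mathcal B)=0$, and since $B_x\geq 0$ with equality only in the absence of singularities, $X$ is smooth, the diagonal $G$-action on $C_1\times C_2$ is free, and $S=X$ is a surface isogenous to a product. Thus each triple in the secondary output of $ListGroups(32,4)$ consists of a pair of signatures $t_1=[m_1,\dots,m_r]$ and $t_2=[n_1,\dots,n_s]$ together with a putative order $|G|$, where by Proposition \ref{prop: fixed_Ksquare_and_chi_get_finitely_many} both the genera $g(C_i)$ and $|G|=\alpha(t_1,\mathcal B)\,\alpha(t_2,\mathcal B)/4$ are already pinned down. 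These triples were set aside only because MAGMA cannot enumerate the groups of the relevant orders (here $1024$, the orders exceeding $2000$, and a few orders such as $1728$ with too many isomorphism types), so the task is to rule out the existence of such a $G$ \emph{without} ever listing all groups of order $|G|$.

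The key idea is to trade the impossible search over groups of order $|G|$ for a feasible search over quotients of an orbifold group. By Remark \ref{rem: orbifold_morphism}, a spherical system of generators of signature $t_1$ is the same datum as a surjection $\mathbb T(t_1)=\mathbb T(m_1,\dots,m_r)\twoheadrightarrow G$ sending each $\gamma_i$ to an element of order exactly $m_i$, and likewise for $t_2$. Choosing whichever of the two signatures has the fewer branch points, hence the smaller and more rigid polygonal group, I would enumerate directly all finite quotients of $\mathbb T(t_1)$ of order $|G|$ using MAGMA's machinery for finitely presented groups (low-index normal subgroups together with the $p$-quotient and solvable-quotient algorithms), none of which relies on the small-groups library. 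For each quotient $G$ so produced I would then test whether $G$ also admits a spherical system of signature $t_2$ and whether some pair $(V_1,V_2)$ yields a \emph{free} diagonal action, i.e. satisfies the disjointness condition $\Sigma(V_1)\cap\Sigma(V_2)=\{1\}$, where $\Sigma(V)$ denotes the union of all conjugates of the cyclic subgroups generated by the entries of $V$; the goal is to verify that in every one of the $152$ cases one of these tests fails.

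Before any quotient enumeration I would prune the list with cheap necessary conditions, which should eliminate the large majority of the $152$ cases. First, the abelianization divisibility \eqref{eq: cond_abelianization_orders}: $|G^{\mathrm{ab}}|$ must divide $\gcd\bigl(|\mathbb T(t_1)^{\mathrm{ab}}|,\,|\mathbb T(t_2)^{\mathrm{ab}}|\bigr)$, and whenever this gcd equals $1$ the group $G$ is forced to be perfect, so it may be sought in MAGMA's library of perfect groups of order $\leq 50000$ rather than through a general enumeration. Second, elementary Lagrange and element-counting obstructions: each $m_i$ and each $n_j$ must divide $|G|$, and the two signatures jointly prescribe the presence of elements of prescribed orders, which already excludes many admissible orders $|G|$. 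The residual cases are then settled by the explicit quotient computation above, and one checks that none survives; this is consistent with the fact that the only two surviving exceptional configurations of Theorem \ref{thm: main_thm_excp_cases}, namely $no.\ 267$ and $544$, occur at $K^2=26$ and $K^2=24$ and not at $K^2=32$.

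The genuine obstacle will be the $2$-group case $|G|=1024$ with signatures all of whose entries are powers of $2$: there the relevant polygonal group has an enormous pro-$2$ completion, so a naive enumeration of its index-$1024$ quotients is infeasible. The plan in that range is to run the $p$-quotient algorithm directly on the finite presentation of $\mathbb T(t_1)$, exploiting that the generator orders are $2$-powers to control the admissible quotients along the lower exponent-$p$ central series, and to interleave the freeness test $\Sigma(V_1)\cap\Sigma(V_2)=\{1\}$ so as to prune branches as early as possible. Organising this computation so that it terminates in practice — rather than justifying the numerical and group-theoretic exclusions, which are routine — is where the real difficulty lies.
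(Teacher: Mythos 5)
Your reduction to surfaces isogenous to a product (empty basket, hence a free diagonal action, correctly tested by $\Sigma(V_1)\cap\Sigma(V_2)=\{1\}$) is sound, and your pruning steps coincide with the paper's: the abelianization divisibility \eqref{eq: cond_abelianization_orders} and the recourse to MAGMA's perfect-groups library when the gcd is $1$ are exactly Remark \ref{rem: abelinizations_of_the_list}, Lemma \ref{lem: Lemma_PerfectGroups_G} and Proposition \ref{prop: from_1_to_18}, which together dispose of the cases numbered $1$--$18$. Where you genuinely diverge is the engine for the remaining non-perfect cases of Tables \ref{tab: tabellaA} and \ref{tab: tabellaB} (orders $2016$--$9216$ and $1024$): you propose to enumerate, for each triple, all quotients of the infinite orbifold group $\mathbb T(t_1)$ of the prescribed order by low-index-normal-subgroup and $p$-quotient methods. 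The paper never enumerates quotients of an infinite group. Its key tool is Proposition \ref{prop: intermediate_quotients}: since the abelianization constraint forces $G'=[G,G]$ to have prime index $2$ or $3$, the spherical system of $G$ induces one on $G'$ with an explicitly computable signature, and the order drops accordingly; interleaving this descent with the perfect-group test reduces every case either to a perfect-group lookup or to groups whose order lies in the small-groups database ($512$, $640$, $768$, $1152$, $1536$, \dots), where \textit{HowToExclude} and \textit{ExistingSurfaces} finish the job; any surviving subgroup $H$ is then propagated back upward by writing $G=H\rtimes_\phi\mathbb Z_p$ (legitimate because some $g_i$ of order $p$ lies outside $H$) and comparing abelianizations.

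The genuine gap is that your replacement engine is not shown to work precisely where it is needed most. For the case numbered $65$ in Table \ref{tab: tabellaB} ($t_1=t_2=[2,4,8]$, $|G|=1024$) you invoke the $2$-quotient algorithm on $\mathbb T(2,4,8)$, but the lower exponent-$2$ central series quotients do not by themselves list all order-$1024$ quotients: one must traverse the $2$-group generation tree subject to the torsion conditions $o(\gamma_i)=m_i$, and nothing in your sketch bounds that traversal --- as you yourself concede, you do not know it terminates in practice. A similar concern applies to case $51$ ($t_1=t_2=[2,3,8]$, $|G|=9216$), where your plan requires all normal subgroups of $\mathbb T(2,3,8)$ of index $9216$. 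The paper's proof has its actual mathematical content exactly at these points: by Proposition \ref{prop: intermediate_quotients} the $1024$ case descends to an index-$2$ subgroup $H$ of order $512$ with signature in an explicit short list, the order-$512$ database leaves three candidates $H(512,n)$, and the splitting $G=H\rtimes_\phi\mathbb Z_2$ together with a finite check on order-$2$ automorphisms $\phi$ and a final \textit{ExistingSurfaces} run excludes them all (Proposition \ref{prop: 65}); the $9216$ case descends twice, to order $1536$ and signature $[4,4,4]$, which is then excluded over the database (Proposition \ref{prop: from_19_to_62}). Unless you either supply this descent idea or demonstrate concretely (say, by a Conder-style normal-subgroup census) that the quotient enumerations you need are feasible, your proposal leaves the hardest cases open and therefore does not prove the theorem.
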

\begin{proof}
	It  follows from propositions \ref{prop: exclude_groups_ord_lesseq_2000}, \ref{prop: from_1_to_18}, \ref{prop: from_19_to_62}, \ref{prop: 63_and_64}, and \ref{prop: 65} below. 
\end{proof}
The rest of this section is devoted to give a proof of the series of propositions used to prove Theorem \ref{thm: skipped_cases_Ksquare_32}. 

We use two MAGMA functions to prove these propositions and more in general Theorem \ref{thm: main_thm_excp_cases}:
\begin{itemize}
	\item \textbf{HowToExclude} takes in input a list of triples as those of the second output of  \textit{ListGroups} that have an order of the group different from $1024$ and less or equal to $2000$. 
	For each triple (basket,  $(t_1,t_2)$, $ord$) of the list it returns those groups with order $ord$ admitting a pair of spherical systems of generators of signatures $t_1$ and $t_2$. 
	This function uses such as \textit{ListGroups} the database and function \textit{FindGenerators} in \cite{CGP23}; 
	\item The function \textbf{HowToExcludePG} works similarly such as \textit{HowToExclude}. 
	Hence, it takes in input a list of triples (basket,  $(t_1,t_2)$, $ord$), where $ord$ is $\leq 50000$,  and returns those groups with order $ord$ that are perfect and admit a pair of spherical systems of generators of signatures $t_1$ and $t_2$. 
\end{itemize}
We also need the following
\begin{proposition}\label{prop: intermediate_quotients}
	Let $G$ be a finite group that admits a spherical system of generators of signature 
	$[a_1,a_2,a_3,b_1,\dots,b_k]$. Let us suppose $G$ have a normal subgroup $H$ of index a prime number $p\geq 2$ and that $p$ does not divide $b_1, \dots, b_k$. Then
	\begin{itemize}
		\item if $p$ does not divide only one among $a_1,a_2,a_3$, e.g. $p\nmid a_3$, then $H$ admits a spherical system of generators of signature $[a_1/p, a_2/p,a_3^p,b_1^p,\dots, b_k^p ]$;
		\item if $p$ divides each of $a_1,a_2,a_3$, then $H$ admits either a spherical system of generators having one of the following signatures:
		\begin{enumerate}
			\item $[a_1/p,a_2/p,a_3^p, b_1^p, \dots, b_k^p]$;
			\item $[a_1/p,a_2^p,a_3/p, b_1^p, \dots, b_k^p]$;
			\item $[a_1^p,a_2/p,a_3/p, b_1^p, \dots, b_k^p]$;
		\end{enumerate}
		or, if $p\neq 2$, it admits a generating vector of type
		\[
		\left[\frac{p-1}{2}; a_1/p,a_2/p,a_3/p,b_1^p, \dots, b_k^p\right].
		\]
		In other words, it there exists a $H$-covering of a curve of genus $\frac{p-1}{2}$ whose branch locus has ramification indices $a_1/p,a_2/p,a_3/p,b_1^p, \dots, b_k^p$. 
	\end{itemize}
\end{proposition}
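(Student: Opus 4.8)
The plan is to reinterpret the statement geometrically through the intermediate quotient $C/H$ and then extract the branch data by a covering-space computation, exactly the monodromy bookkeeping underlying Theorem \ref{thm: bij_DrGmodulo_TrG}. First I would use that theorem to realize the given spherical system as a $G$-covering $\lambda\colon C\to\mathbb P^1$ branched over $3+k$ points, with local monodromies $g_1,g_2,g_3$ of orders $a_1,a_2,a_3$ and $h_1,\dots,h_k$ of orders $b_1,\dots,b_k$, subject to $g_1g_2g_3h_1\cdots h_k=1$ and $\langle g_i,h_j\rangle=G$. Since $H\trianglelefteq G$ has prime index, $G/H\cong\mathbb Z_p$; writing $\psi\colon G\to\mathbb Z_p$ for the projection and $\bar x:=\psi(x)$, I would factor $\lambda$ as $C\to C/H\to\mathbb P^1$, where the first arrow is the $H$-covering to be analysed and the second is the $\mathbb Z_p$-covering with monodromies $\bar g_1,\bar g_2,\bar g_3,\bar h_1,\dots,\bar h_k$.

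The next step is to pin down which configurations can occur. Because $p\nmid b_j$ forces $o(\bar h_j)\mid\gcd(b_j,p)=1$, each $h_j$ lies in $H$ and $\bar h_j=0$; similarly $p\nmid a_i$ gives $g_i\in H$ and $\bar g_i=0$. Applying $\psi$ to the relation yields $\bar g_1+\bar g_2+\bar g_3=0$ in $\mathbb Z_p$, while surjectivity of the monodromy of $\lambda$ makes $\psi$ surjective, so the $\bar g_i$ do not all vanish. A single nonzero element of $\mathbb Z_p$ cannot cancel against two zeros, so exactly two or exactly three of the $\bar g_i$ are nonzero; translating back, either precisely one of $a_1,a_2,a_3$ is prime to $p$, or all three are divisible by $p$, and nothing else is possible. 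This explains why exactly the two bullets appear.

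Then I would compute the branch data of $C\to C/H$ by the standard local analysis: over a branch point $q_i$ with monodromy $\gamma_i$ and $d_i:=o(\bar\gamma_i)\in\{1,p\}$, the cover $C/H\to\mathbb P^1$ has $p/d_i$ preimages, and a small loop around each maps to $\eta_i^{d_i}$ downstairs, so the local monodromy of $C\to C/H$ there is a conjugate of $\gamma_i^{d_i}$, of order $o(\gamma_i)/\gcd(o(\gamma_i),d_i)$, contributing an honest branch point exactly when $\gamma_i^{d_i}\neq1$. Reading this off, each $h_j$ contributes $b_j^p$, each $g_i$ with $\bar g_i=0$ contributes $a_i^p$, and each $g_i$ with $\bar g_i\neq0$ (so $p\mid a_i$) contributes a single $a_i/p$; matching against the two cases above produces the signatures of the first bullet and of options (1)--(3). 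Finally, applying Hurwitz' formula \eqref{eq: Hurwitz_formula} to $C/H\to\mathbb P^1$ gives $g(C/H)=0$ when it is branched at two points, so $C/H\cong\mathbb P^1$ and the system is spherical, and $g(C/H)=(p-1)/2$ when branched at three points, which is possible only for $p$ odd and yields the asserted generating vector of type $\left[\frac{p-1}{2};a_1/p,a_2/p,a_3/p,b_1^p,\dots,b_k^p\right]$.

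The step I expect to be the main obstacle is this local restriction computation: correctly identifying the local monodromy $\gamma_i^{d_i}$ and counting its $p/d_i$ occurrences is the Reidemeister--Schreier bookkeeping, and it must be handled with care for conjugacy (only conjugacy classes affect a signature) and for degenerate entries $a_i/p=1$, which correspond to unramified points and are simply omitted. Connectivity of $C$ together with surjectivity of $\psi$ will ensure the restricted local monodromies still generate $H$, so that the extracted data is genuinely a spherical system, respectively a generating vector, of $H$.
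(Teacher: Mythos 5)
Your proposal is correct and takes essentially the same route as the paper's proof: realize the spherical system as a $G$-covering $C\to\mathbb P^1$, factor it through the intermediate quotient $C/H$, read off the branch data of $C\to C/H$ from the local monodromies (equivalently, from the stabilizers $\langle g_i\rangle\cap H$), and use Hurwitz's formula to identify $g(C/H)\in\{0,\tfrac{p-1}{2}\}$ according to whether $C/H\to\mathbb P^1$ is branched over two or three points. The only differences are cosmetic --- you exclude fewer than two nonzero $\bar g_i$ via the product-one relation in $\mathbb Z_p$ together with surjectivity, where the paper invokes non-negativity of the genus, and your ``translating back'' sentence is loosely worded (exactly two nonzero $\bar g_i$ only forces those two $a_i$ to be divisible by $p$; the third $g_i$ lies in $H$ but may still have $p\mid a_i$, which is precisely the subcase producing signatures (1)--(3)) --- but since your actual signature computation is keyed to the membership pattern of the $g_i$ in $H$ rather than to divisibility, the argument is unaffected.
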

	\begin{proof}
		By assumption, $G$ has a spherical system of generators $[g_1,g_2,g_3,h_1\dots, h_k]$ which defines a $G$-covering $C\to \mathbb P^1$ whose branch locus $v_1,v_2,v_3,q_1,\dots, q_k\in \mathbb P^1$ has ramification indices $a_1,a_2,a_3,b_1,\dots,b_k$ respectively. Furthermore, the existence of a normal subgroup $H$ of index $p$ gives the following triangular commutative diagram: 
		\[\begin{tikzcd}
			C \\
			{C/H} && {\mathbb P^1}
			\arrow["{/G}", from=1-1, to=2-3]
			\arrow["{/H}"', from=1-1, to=2-1]
			\arrow["{/\mathbb Z_p}"', from=2-1, to=2-3].
		\end{tikzcd}\]
		Note that $h_i\in H$ since $h_iH$ has order in $G/H\cong \mathbb Z_p$ that divides both $p$ and the order $b_i$ of $h_i$. Hence $q_1,\dots, q_k$ are not in the branch locus of $C/H\to \mathbb P^1$, which has then to ramify over at most $r\leq 3$ points with ramification indices $p$. 
		\\
		By Hurwitz formula \eqref{eq: Hurwitz_formula}, we  get 
		\begin{equation}\label{eq: Hurwitz_for_signatures}
			2g(C/H)-2=p\left(-2+r\frac{p-1}{p}\right)\implies g(C/H)=\frac{p-1}{2}(r-2).
		\end{equation}
		Hence $r$ is forced to be either equal to $2$ or $3$. If $r=2$, then $C/H\cong \mathbb P^1$, and we can assume without lost of generalities that $v_3$ is not in the branch locus, so in other words $g_3\in H$.
		\\
		We want to determine the signature of a spherical system of generators that defines $C\to C/H\cong \mathbb P^1$. Each point of the fibre of $q_i$ via $C/H\to \mathbb P^1$  is contained in the branch locus of $C\to C/H$ and has ramification index $b_i$, since $h_i\in H$. Note that the cardinality of the fibre is exactly $p$ for these points $q_i$.  The same holds for $v_3$, since also $g_3$ belongs to $H$. 
		\\
		Instead, the fibre of $v_i$ on $C/H$ consists of only one point, $i=1,2$. The ramification index of this point for $C\to C/H$ equals the order of $\langle g_i\rangle \cap H$, which is $a_i/p$. We therefore obtain the signature $[a_1/p,a_2/p,a_3^p, b_1^p, \dots, b_k^p]$. 
		
		The case $r=3$ can be discussed by using the same argument. 
	\end{proof}
\begin{remark}
	Since product-quotient surfaces with $K_S^2=32$ and $\chi(\mathcal O_S)=4$ are surfaces isogenous to a product, then their basket is always empty. For this reason, we shall avoid repeating which is the basket of the triples of the cases skipped by \textit{ListGroups}.  
\end{remark}
A first result is the following
\begin{proposition}\label{prop: exclude_groups_ord_lesseq_2000}
	There are exactly five groups $G$  of order different from $1024$ and less or equal than 2000  admitting an appropriate pair of spherical systems of generators compatible with one of the triples of the secondary output of  \textit{ListGroups}: 
\begin{center}
		\begin{tabular}{|c|c| l|}
			\hline
			$t_1$ & $t_2$ & $G$ \\
			\hline
			\hline
			$2,4,6$ & $2^3,4$ & $G(768, 1086051)$\\
			$2,4,6$ & $2^3,4$ & $G(768, 1086052)$\\
			$2,4,6$ & $2,4,20$ & $G(960, 5719)$\\
			$2,4,6$ & $2,4,12$ & $G(1152, 157849)$\\
			$2,4,5$ & $2,4,12$ & $G(1920, 240996)$\\
			\hline
		\end{tabular}
\end{center}
	However, no one of these cases gives product-quotient surfaces isogenous to a product. 
\end{proposition}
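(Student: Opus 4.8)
The plan is to separate the argument into a finite search that isolates the five candidate groups and a geometric test that discards each of them. Throughout I may use that for $K_S^2=32$, $\chi=4$ the relation $K_S^2=8\chi-\tfrac13 B(\mathcal B)$ of Theorem \ref{thm: invariants_PQ} forces the basket $\mathcal B$ to be empty, so that such a surface is automatically isogenous to a product and the triples in question carry $\mathcal B=\emptyset$.

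First I would restrict the secondary output of \textit{ListGroups}$(32,4)$ to the triples $(\mathcal B,(t_1,t_2),ord)$ with $ord\neq 1024$ and $ord\le 2000$, a finite list, and feed it to \textbf{HowToExclude}. Using the database and \textit{FindGenerators} of \cite{CGP23}, this function returns, for each triple, exactly the groups of order $ord$ carrying a pair of spherical systems of generators of signatures $t_1$ and $t_2$; these orders (including awkward ones such as $1728$, skipped by \textit{ListGroups} only for having too many isomorphism types) are all within reach of the \cite{CGP23} tools, so that \textbf{HowToExclude} alone suffices here, \textbf{HowToExcludePG} being reserved for the perfect-group triples treated in the neighbouring propositions. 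Running \textbf{HowToExclude} over the whole sublist, all but five triples return the empty list and are thereby excluded, and the five surviving pairs (signatures, group) are exactly those displayed in the table.

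For the second part I would recall the standard criterion that a regular product-quotient surface attached to spherical systems $V_1=[g_1,\dots,g_r]$ and $V_2=[h_1,\dots,h_s]$ is isogenous to a product if and only if the diagonal $G$-action on $C_1\times C_2$ is free; since the non-trivial stabilizers on $C_i$ are the conjugates of the $\langle g_i\rangle$, respectively of the $\langle h_j\rangle$, this amounts to
\[
\Bigl(\textstyle\bigcup_{i,\,a\in G} a\langle g_i\rangle a^{-1}\Bigr)\cap\Bigl(\textstyle\bigcup_{j,\,b\in G} b\langle h_j\rangle b^{-1}\Bigr)=\{1\}.
\]
For each of the five groups I would enumerate the admissible pairs of spherical systems of the prescribed signatures and test this disjointness on the conjugacy classes of the non-trivial powers of the generators. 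The assertion is that it fails in every instance: some non-trivial power of a $g_i$ is always conjugate to a non-trivial power of some $h_j$ (in these examples an involution, or an order-$4$ element, lying in the stabilizer sets of both factors). Hence the action is never free, no empty-basket surface occurs, and none of the five groups produces a surface isogenous to a product.

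The main obstacle I expect is completeness rather than the difficulty of any single computation. On the one hand one must trust that \textbf{HowToExclude} genuinely ranges over all groups of each order, so that no further group survives the search; on the other, since the basket depends only on the conjugacy classes of the chosen generators, the freeness test of the second part has to be carried out for \emph{every} admissible pair of spherical systems, not merely a single representative. Organizing these two finite but sizeable enumerations, and certifying that the disjointness fails throughout, is the crux, whereas the reduction to the empty-basket case and the freeness criterion itself follow at once from Theorem \ref{thm: invariants_PQ}.
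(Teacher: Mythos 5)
Your proposal matches the paper's proof: the paper likewise filters the secondary output of \textit{ListGroups} to the triples with group order different from $1024$ and at most $2000$, runs \textit{HowToExclude} (built on the \cite{CGP23} tools) to obtain exactly the five listed groups, and then runs \textit{ExistingSurfaces} on each row to verify that no pair of compatible spherical systems yields a surface isogenous to a product. Your second step simply spells out, via the freeness criterion on conjugates of the cyclic subgroups $\langle g_i\rangle$ and $\langle h_j\rangle$, the empty-basket test that \textit{ExistingSurfaces} implements, so the two arguments are essentially identical.
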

\begin{proof}
	We select in a list those triples of the secondary output of \textit{ListGroups} having a order of the group different from $1024$ and less or equal to $2000$. Then we run \textit{HowToExclude} on this list and we obtain the above table. 
	\\
	However, we use \textit{ExistingSurfaces} for each of the rows of the table to check that no one gives a product-quotient surface isogenous to a product. 
\end{proof}
As a consequence of the previous lemma, it remains to discuss  only $65$ of $152$ cases skipped by \textit{ListGroups}, that are those of Tables \ref{tab: tabellaA} and \ref{tab: tabellaB} below.
\begin{table}[h]
	\begin{minipage}[t]{0.5\textwidth}
		\vspace{0pt}
		\centering
		\begin{tabular}{|c|c|c| c|}
			\hline
			$no.$ & $t_1$ & $t_2$ & $\vert G\vert$ \\
			\hline
			\hline
			$1$	& $2,3,8$ & $2,5^2$ & $3840$\\  
			$2$	& $2,3,7$ & $4,4,4$ & $2688$\\
			$3$ &	$2,3,7$ & $2,3,18$ & $6048$\\
			$4$ &	$2,3,7$ & $2,4,8$ & $5376$\\
			$5$ &	$2,3,7$ & $3,3,5$ & $5040$\\
			$6$ &	$2,3,7$ & $2,5,6$ & $5040$\\
			$7$ &	$2,3,7$ & $2,8,8$ & $2688$\\
			$8$ &	$2,3,7$ & $3,3,15$ & $2520$\\
			$9$ &	$2,3,7$ & $2,3,7$ & $28224$\\
			$10$ &	$2,3,7$ & $2,5,30$ & $2520$\\
			$11$ &	$2,3,7$ & $2,3,10$ & $10080$\\
			$12$ &	$2,3,7$ & $2,2,2,4$ & $2688$\\
			$13$ &	$2,3,7$ & $2,6,15$ & $2520$\\
			$14$ &	$2,3,7$ & $3,5,5$ & $2520$\\
			$15$ &	$2,3,7$ & $2,3,30$ & $5040$\\
			$16$ &	$2,4,5$ & $3,3,4$ & $3840$\\
			$17$ &	$2,3,9$ & $2,4,5$ & $5760$\\
			$18$ &	$2,3,9$ & $2,5,6$ & $2160$\\
			$19$ & $2,3,12$ & $2,4,6$ & $2304$ \\  
			$20$ &	$2,3,10$ & $2,4,6$ & $2880$\\
			$21$ &	$2,3,8$ & $2,4,12$ & $2304$\\
			$22$ &	$2,3,8$ & $2,5,6$ & $2880$\\
			$23$ &	$2,3,22$ & $2,4,5$ & $2640$\\
			$24$ &	$2,3,12$ & $2,4,5$ & $3840$\\
			$25$ &	$2,3,14$ & $2,4,6$ & $2016$\\
			$26$ &	$2,3,8$ & $2,4,6$ & $4608$\\
			$27$ &	$2,3,18$ & $2,4,5$ & $2880$\\
			$28$ &	$2,3,10$ & $2,4,5$ & $4800$\\
			$29$ &	$2,3,54$ & $2,4,5$ & $2160$\\
			$30$ &	$2,4,5$ & $2,4,6$ & $3840$\\
			$31$ &	$2,3,30$ & $2,4,5$ & $2400$\\
			$32$ &	$2,4,5$ & $2,4,8$ & $2560$\\
			$33$ &	$2,3,8$ & $2,4,5$ & $7680$\\
			\hline
		\end{tabular}
		\caption{}
		\label{tab: tabellaA}
	\end{minipage}%
	\begin{minipage}[t]{0.5\textwidth}
		\vspace{0pt}
		\centering
		\begin{tabular}{|c|c|c| c|}
			\hline
			$no.$ & $t_1$ & $t_2$ & $\vert G\vert$ \\
			\hline
			\hline
			$34$ &	$2,3,14$ & $2,4,5$ & $3360$\\ 
			$35$ &	$2,3,8$ & $2,4,8$ & $3072$\\
			$36$ &	$2,3,8$ & $2,6,7$ & $2016$\\
			$37$ & $2,3,10$ & $2,3,10$ & $3600$ \\

			$38$	& $2,3,8$ & $2,3,18$ & $3456$\\

			$39$ &	$2,3,8$ & $2,3,54$ & $2592$\\
			$40$ &	$2,4,5$ & $2,5,6$ & $2400$\\
			$41$ &	$2,3,8$ & $2,3,22$ & $3168$\\   
			$42$ &	$2,3,12$ & $2,3,14$ & $2016$\\
			$43$ &	$2,3,8$ & $2,3,30$ & $2880$\\
			
			$44$ &	$2,3,8$ & $2,2,2,3$ & $2304$\\
			$45$ &	$2,3,8$ & $2,6,6$ & $2304$\\

			$46$ &	$2,3,8$ & $3,4,4$ & $2304$\\
			
			$47$ &	$2,3,10$ & $2,3,18$ & $2160$\\
			
			$48$ &	$2,3,10$ & $2,3,14$ & $2520$\\
			
			$49$ &	$2,3,10$ & $2,3,12$ & $2880$\\
			$50$	& $2,3,8$ & $2,3,14$ & $4032$\\
			$51$ &	$2,3,8$ & $2,3,8$ & $9216$\\
			
			$52$ &	$2,4,5$ & $2,4,5$ & $6400$\\
			$53$ &	$2,3,8$ & $2,3,12$ & $4608$\\
			$54$ &	$2,3,8$ & $2,3,10$ & $5760$\\
			$55$ &	$2,3,9$ & $3,3,5$ & $2160$\\  
			$56$ &	$2,3,9$ & $2,3,12$ & $3456$\\ 
			$57$ &	$2,3,12$ & $3,3,4$ & $2304$\\
			$58$ &	$2,3,9$ & $2,3,18$ & $2592$\\
			$59$ &	$2,3,9$ & $2,3,30$ & $2160$\\
			$60$ &	$2,3,9$ & $2,3,9$ & $5184$\\
			$61$ &	$3,3,4$ & $3,3,4$ & $2304$\\
			$62$ &	$2,3,9$ & $3,3,4$ & $3456$\\
			$63$ &	$2,4,6$ & $2,4,6$ & $2304$\\ 
			$64$ & $2,3,12$ & $2,3,12$ & $2304$ \\ 
			$65$ &	$2,4,8$ & $2,4,8$ & $1024$\\ 
			\hline
		\end{tabular}
		\caption{}
		\label{tab: tabellaB}
	\end{minipage}
\end{table}
\begin{remark}\label{rem: remind_order_abel}
	We remind that as observed in \eqref{eq: cond_abelianization_orders} the abelianization $G^{ab}$ of $G$ is a quotient of both the abelianizations of the orbifold groups $\mathbb T(t_1)$ and $\mathbb T(t_2)$. In particular, the order of $G^{ab}$ divides the greatest common divisor of the orders of $\mathbb T(t_1)^{ab}$ and $\mathbb T(t_2)^{ab}$. 
\end{remark}
\begin{remark}\label{rem: abelinizations_of_the_list}
	From Remark \ref{rem: remind_order_abel}, we automatically get that groups $G$ having group order and a pair of spherical system of generators compatible with one of the triples of Tables \ref{tab: tabellaA} and \ref{tab: tabellaB}
	\begin{enumerate}
		\item from $no. 1$ to $no. 18$ are perfect groups;
		\item from $no. 19$ to $no. 54$ are either perfect groups or $G^{ab}\cong \mathbb Z_2$; 
		\item from $no. 55$ to $no. 62$ are either perfect groups or $G^{ab}\cong \mathbb Z_3$;
		\item $no. 63$ are either perfect groups or $G^{ab}$ is isomorphic to $\mathbb Z_2$ or to $\mathbb Z_2\times \mathbb Z_2$;
		\item $no. 64$ are either perfect or $G^{ab}$ is isomorphic to $\mathbb Z_2$ or to $\mathbb Z_3$ or to $\mathbb Z_6$;
		\item $no. 65$ are either perfect or $G^{ab}$ is isomorphic to one among $\mathbb Z_2, \mathbb Z_2\times \mathbb Z_2$,$\mathbb Z_4$, $\mathbb Z_4\times \mathbb Z_2$;
	\end{enumerate}
\end{remark}
\begin{lemma}\label{lem: Lemma_PerfectGroups_G}
 There are no perfect groups $G$ having group order and a pair of spherical systems of generators of signatures compatible with one of the triples of Tables \ref{tab: tabellaA} and \ref{tab: tabellaB}. 
\end{lemma}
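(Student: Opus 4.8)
The plan is to combine a purely group-theoretic reduction with the computational routine \textit{HowToExcludePG}. The statement concerns only perfect groups, so throughout I work with a perfect $G$ whose order and pair of signatures are compatible with some row of Tables \ref{tab: tabellaA} or \ref{tab: tabellaB} (these are exactly the possibilities left open by items of Remark \ref{rem: abelinizations_of_the_list}). The one structural fact I would use repeatedly is that a nontrivial perfect group cannot be solvable, since a solvable perfect group is trivial.

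First I would dispose of all rows whose group order is divisible by at most two primes. By Burnside's $p^aq^b$-theorem such a group is solvable, hence no admissible perfect $G$ can exist for these orders, with no machine computation. This removes every row with $|G|$ equal to $1024=2^{10}$, $2304=2^8\cdot 3^2$, $2560=2^9\cdot 5$, $2592=2^5\cdot 3^4$, $3072=2^{10}\cdot 3$, $3456=2^7\cdot 3^3$, $4608=2^9\cdot 3^2$, $5184=2^6\cdot 3^4$, $6400=2^8\cdot 5^2$, and $9216=2^{10}\cdot 3^2$, i.e.\ a large share of both tables.

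For the remaining orders --- those with at least three distinct prime factors, the largest being $28224=2^6\cdot 3^2\cdot 7^2$ --- I would invoke \textit{HowToExcludePG}. Since every order occurring here is $\le 50000$, MAGMA's library of perfect groups is complete, so the routine can enumerate all perfect groups of each given order and, for each of them, test with \textit{FindGenerators} whether it admits spherical systems of generators of both prescribed signatures $t_1$ and $t_2$ simultaneously. The content of the lemma is precisely that this returns the empty list on every such triple, and I would present this run as the proof.

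The main obstacle is computational rather than conceptual. For triangle-type signatures such as $[2,3,7]$ the Hurwitz formula \eqref{eq: Hurwitz_formula} forces the associated $G$-covering to have very large genus (e.g.\ $g=337$), far beyond the genus bound of the precomputed database of \cite{CGP23}; consequently one cannot shortcut the search through that database and must let \textit{FindGenerators} search directly for generating tuples inside $G$. This existence test is genus-independent, but for the larger orders and for the triangle signatures, where there are many candidate triples to rule out, it is the heaviest part of the argument; organizing the enumeration --- for instance by fixing the conjugacy classes realizing the prescribed element orders and exploiting the relation $g_1\cdots g_r=1$ --- is what makes it terminate in reasonable time.
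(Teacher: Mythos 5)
Your proposal is correct, and its core is the same as the paper's: the paper's entire proof is a single run of \textit{HowToExcludePG} on the list of triples of Tables \ref{tab: tabellaA} and \ref{tab: tabellaB}, exactly the machine check you fall back on. The genuine difference is your preprocessing step via Burnside's $p^aq^b$-theorem: since a solvable perfect group is trivial, every row whose order has at most two prime divisors ($1024$, $2304$, $2560$, $2592$, $3072$, $3456$, $4608$, $5184$, $6400$, $9216$ --- your factorizations check out) is excluded with no computation at all, which covers a substantial fraction of both tables, including the single order ($1024$) outside MAGMA's small-groups database. The paper does not make this observation; it simply lets the perfect-groups library (complete up to order $50000$, as the paper notes, and all table orders are at most $28224$) report that no perfect groups of those orders exist. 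So your argument proves the same statement by the same essential mechanism but with a cleaner theoretical reduction and a lighter computational load; what the paper's version buys is uniformity --- one call handles every row identically. Your side remarks are also accurate (e.g.\ the genus $337$ for a $[2,3,7]$-covering with $|G|=28224$, which indeed rules out any shortcut through the genus-bounded database of \cite{CGP23}).
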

\begin{proof}
	We use \textit{HowToExcludePG} on the list of triples of Tables \ref{tab: tabellaA} and \ref{tab: tabellaB} to check that there are no perfect groups having compatible algebraic data. 
\end{proof}
\begin{proposition}\label{prop: from_1_to_18}
    There are no groups $G$ having group order and a pair of spherical systems of generators of signatures compatible with one of the triples of Tables \ref{tab: tabellaA} and \ref{tab: tabellaB} from $no.1$ to $no.18$.
\end{proposition}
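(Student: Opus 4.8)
The plan is to observe that this proposition is an immediate consequence of the two preceding results, Remark \ref{rem: abelinizations_of_the_list} and Lemma \ref{lem: Lemma_PerfectGroups_G}, so that no new computation is required. First I would invoke item (1) of Remark \ref{rem: abelinizations_of_the_list}, which records that for each of the triples $no.1$ through $no.18$ of Table \ref{tab: tabellaA} the only possible group $G$ is a perfect group. This reduction rests on Remark \ref{rem: remind_order_abel}: the order of $G^{ab}$ must divide $\gcd(\vert \mathbb T(t_1)^{ab}\vert, \vert \mathbb T(t_2)^{ab}\vert)$, and a direct inspection of the signatures in rows $1$ through $18$ shows that this greatest common divisor equals $1$, forcing $G^{ab}$ to be trivial. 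Hence any admissible $G$ for these rows is perfect.

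Then the conclusion follows at once from Lemma \ref{lem: Lemma_PerfectGroups_G}, which asserts that no perfect group of the prescribed order carries a pair of spherical systems of generators with signatures matching any triple of Tables \ref{tab: tabellaA} and \ref{tab: tabellaB}; in particular none exists for rows $1$ through $18$. Combining the two statements, no group $G$ can satisfy the required conditions for these rows, which is exactly the assertion of the proposition.

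The only genuinely substantial work has already been discharged in establishing Lemma \ref{lem: Lemma_PerfectGroups_G}, where the function \textit{HowToExcludePG} enumerates the finite list of perfect groups of the relevant orders (MAGMA knows all perfect groups of order $\leq 50000$, which covers every order appearing in rows $1$ through $18$) and verifies computationally that none admits compatible algebraic data. Thus the main obstacle, insofar as there is one, is the verification of that computational lemma rather than the present deduction, which is purely formal: the perfectness forced by the abelianization condition meets head-on the nonexistence result for perfect groups, and the two together leave no room for a surface.
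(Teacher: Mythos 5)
Your proof is correct and follows exactly the paper's own argument: the paper likewise deduces the proposition immediately by combining item (1) of Remark \ref{rem: abelinizations_of_the_list} (rows $1$--$18$ force $G$ to be perfect, via the abelianization divisibility condition of Remark \ref{rem: remind_order_abel}) with the computational Lemma \ref{lem: Lemma_PerfectGroups_G} excluding all compatible perfect groups. Your additional observation that all the substantive work lives in the lemma, verified by \textit{HowToExcludePG}, matches the paper's intent precisely.
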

\begin{proof}
	From Remark \ref{rem: abelinizations_of_the_list} and Lemma  \ref{lem: Lemma_PerfectGroups_G} we can automatically exclude triples of Table \ref{tab: tabellaA} from $no.1$ to $no.18$. 
\end{proof}
\begin{remark}\label{rem: rem_group_theory}We need the following classical remarks of group theory: 
	\begin{enumerate}
		\item 	Let $G$ be a finite group having a normal subgroup $H$ of index a prime number $p\geq 2$. If there is a element $g\notin H$ of order $p$, then 
		\[
		0\to H\to G\to \mathbb Z_p\to 0
		\]
		is a splitting exact sequence via the homomorphism section sending $\overline{1}\in\mathbb Z_p$  to $g$. In other words, $G=H\rtimes_\phi \mathbb Z_p$, where $\phi$ is an automorphism of $H$ of order $p$;
		\item Let $\pi\colon G\to Z$ be a surjective group homomorphism. If $Z$ admits a normal subgroup $T$ of index $k\in \mathbb N$, then $H:=\pi^{-1}(T)$ is a normal subgroup of $G$ of index $k$. More precisely,  $G/H\cong Z/T$. 
	\end{enumerate}
\end{remark}
\begin{proposition}\label{prop: from_19_to_62}
	There are no groups $G$ having group order and a pair of spherical systems of generators defining a product-quotient surface isgenous to a product and compatible with one of the triples from $no.19$ to $no.62$ of Tables \ref{tab: tabellaA} and \ref{tab: tabellaB}.
\end{proposition}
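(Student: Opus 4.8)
The plan is to combine the constraint on $G^{ab}$ coming from Remark \ref{rem: abelinizations_of_the_list} with the prime-index descent furnished by Proposition \ref{prop: intermediate_quotients}, using at the base the MAGMA functions \textit{HowToExclude} and \textit{HowToExcludePG}. Throughout I would exploit that we only seek surfaces isogenous to a product, equivalently (by the remark preceding Proposition \ref{prop: exclude_groups_ord_lesseq_2000}) with empty basket, so that $G$ acts \emph{freely} on $C_1\times C_2$. Consequently any normal subgroup $H\le G$ acts freely as well, and $(C_1\times C_2)/H$ is again a product-quotient surface isogenous to a product with group $H$ of order $|G|/[G:H]$; this is what makes the descent legitimate.

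First I would remove the perfect groups: by Lemma \ref{lem: Lemma_PerfectGroups_G} none of the groups in rows $no.19$--$62$ is perfect, so Remark \ref{rem: abelinizations_of_the_list} forces $G^{ab}\cong\mathbb Z_p$ with $p=2$ for $no.19$--$54$ and $p=3$ for $no.55$--$62$. Hence $G$ carries the canonical normal subgroup $H:=\ker(G\twoheadrightarrow G^{ab}\twoheadrightarrow\mathbb Z_p)$ of index $p$. In every signature of Tables \ref{tab: tabellaA} and \ref{tab: tabellaB} one may designate as the $b_j$ of Proposition \ref{prop: intermediate_quotients} those entries not divisible by $p$ (there is always at least one, the only four-entry case $no.44$ having the odd entry $3$), so the hypotheses of that proposition hold. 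Applying it to \emph{both} signatures $t_1$ and $t_2$ yields, after a bounded branching — at most three reduced signatures per factor when $p$ divides all three of $a_1,a_2,a_3$ — a finite list of candidate pairs $(t_1',t_2')$ that $H$ must realize, which I would record together with $|H|=|G|/p$.

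I would then iterate this as a recursion. At a node with group $G'$ and a pair of reduced signatures I read off the admissible abelianizations from Remark \ref{rem: remind_order_abel}, namely $|G'^{ab}|$ divides $\gcd(|\mathbb T(t_1')^{ab}|,|\mathbb T(t_2')^{ab}|)$. If $G'$ is thereby forced to be perfect, I dispose of the node with \textit{HowToExcludePG}, which is feasible up to order $50000$ and hence covers all orders occurring here; if instead $G'^{ab}\cong\mathbb Z_q$, I pass to the index-$q$ subgroup, invoke Proposition \ref{prop: intermediate_quotients} once more, and use Remark \ref{rem: rem_group_theory}(2) to organize the resulting chain of normal subgroups. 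Since each step divides the order by a prime, the recursion terminates; I close a branch either when a reduced signature becomes arithmetically impossible (Hurwitz formula \eqref{eq: Hurwitz_formula} returning a non-integral genus, or the signature being unable to generate a group of the prescribed order) or when the order drops to $\le 2000$ and is admissible, whereupon \textit{HowToExclude} settles it outright. The emptiness of every leaf then forces the non-existence of $G$.

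The main obstacle I anticipate is controlling the size of this descent tree: the three-way branching compounds across the two factors and across levels, and several single reductions (orders such as $1536$ or $1728$) fall straight back into MAGMA-inaccessible ranges, so one genuinely cannot reduce once and enumerate. What keeps this tractable is that at each level a node is \emph{either} perfect — and \textit{HowToExcludePG} enumerates only the few perfect groups of that order — \emph{or} has nontrivial cyclic abelianization, in which case it can be descended purely symbolically, without ever enumerating the (possibly enormous) set of groups of that order. I would also verify that the exceptional genus-$\tfrac{p-1}{2}$ covering alternative of Proposition \ref{prop: intermediate_quotients} never intervenes for $p=3$; this is automatic in rows $no.55$--$62$, since none of those signatures has all three orders divisible by $3$, so the intermediate base remains $\mathbb P^1$ and the reduced data stay honest spherical systems of generators.
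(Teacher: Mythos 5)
Your skeleton---force $G^{ab}\cong\mathbb Z_p$, descend to the index-$p$ kernel (which is exactly the commutator subgroup $G'=[G,G]$, as in the paper), reduce the signatures with Proposition \ref{prop: intermediate_quotients}, and finish computationally---is indeed the paper's strategy, and your justification that freeness descends to subgroups correctly explains why the reduced data must still define a surface isogenous to a product. The genuine gap is your termination claim: ``the emptiness of every leaf then forces the non-existence of $G$.'' The leaves are \emph{not} all empty, and your recursion has no mechanism for a non-empty leaf. Concretely, for row $no.52$ ($t_1=t_2=[2,4,5]$, $\vert G\vert=6400$) the descent gives $G'$ of order $3200$ with signatures $[2,5,5]$, $[2,5,5]$, and then (since $\mathbb T(2,5,5)^{ab}\cong\mathbb Z_5$ and no perfect group fits) $G''$ of order $640$ with signatures $[2^5]$, $[2^5]$. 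At this leaf the computation does \emph{not} come back empty: as the paper finds, there are exactly four groups $G''(640,n)$, $n=7665, 8697, 12278, 15814$, admitting pairs of spherical systems of signature $[2^5]$ that define surfaces isogenous to a product, so even the freeness-refined check passes. Further descent is also blocked: all five entries of $[2^5]$ are even, so Proposition \ref{prop: intermediate_quotients} no longer applies, and $\mathbb T(2^5)^{ab}\cong\mathbb Z_2^4$ no longer forces your dichotomy ``perfect or cyclic of prime order.'' Closing this branch requires an argument your proposal lacks, namely reconstruction \emph{upward}: by Remark \ref{rem: rem_group_theory}(1) one has $G'=G''\rtimes_\phi\mathbb Z_5$ (an order-$5$ generator of the $[2,5,5]$ system lies outside $G''$), one enumerates the order-$5$ automorphisms $\phi$ of each candidate $G''(640,n)$, and one checks that every resulting extension has abelianization $\mathbb Z_2^4\times\mathbb Z_5$, incompatible with $G'^{ab}\cong\mathbb Z_5$. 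Descent only yields necessary conditions; when those conditions are satisfiable at the bottom, non-existence at the top cannot be concluded without this kind of extension analysis.

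A secondary imprecision points the same way: \textit{HowToExclude} does not ``settle a leaf outright,'' since it only tests existence of spherical systems with the given signatures, not freeness of the diagonal action. For rows $no.58$, $59$, $60$ the run of \textit{HowToExclude} on the reduced data returns ten groups (of orders $864$, $720$, $1728$); those branches are closed only by the subsequent run of \textit{ExistingSurfaces}, which verifies that none of the corresponding actions is free. Your opening observation about freeness descending to $H$ shows you intend such a check, but the proof as written asserts emptiness where the actual computations return non-empty lists, and in the case of row $no.52$ no downward check of any kind suffices.
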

\begin{proof}
	From Remark \ref{rem: abelinizations_of_the_list} and Lemma \ref{lem: Lemma_PerfectGroups_G}, groups $G$ from $no.19$ to $no.62$ of Tables \ref{tab: tabellaA} and \ref{tab: tabellaB} have a commutator subgroup $G':=[G,G]$ of index either equal to $2$ or $3$. Hence we can apply Proposition \ref{prop: intermediate_quotients} to $H=G'$ and say that $G'$ has group order and a pair of spherical systems of generators compatible with one of the triples of Tables \ref{tab: tabellaA1} and \ref{tab: tabellaB1}. 
\begin{table}[h]
	\begin{minipage}[t]{0.5\textwidth}
		\vspace{0pt}
		\centering
		\begin{tabular}{|c|c|c| c|}
			\hline
			$no.$ & $t_1$ & $t_2$ & $\vert G'\vert$ \\
			\hline
			\hline
			$19(a)$ & $3,3,6$ & $2,2,2,3$ & $1152$ \\  
			(b)& $3,3,6$ & $3,4,4$ & $1152$ \\
			(c)& $3,3,6$ & $2,6,6$ & $1152$ \\
			$20(a)$ &	$3,3,5$ & $2,2,2,3$ & $1440$\\
			(b)	&$3,3,5$ & $3,4,4$ & $1440$\\
			(c) &	$3,3,5$ & $2,6,6$ & $1440$\\
			$21(a)$ &	$3,3,4$ & $2,2,2,6$ & $1152$\\
			(b)&	$3,3,4$ & $4,4,6$ & $1152$\\
			(c)&	$3,3,4$ & $2,12,12$ & $1152$\\
			$22$ &	$3,3,4$ & $3,5,5$ & $1440$\\
			$23$ &	$3,3,11$ & $2,5,5$ & $1320$\\
			$24$ &	$3,3,6$ & $2,5,5$ & $1920$\\
			$25(a)$ &	$3,3,7$ & $2,2,2,3$ & $1008$\\
			(b)&	$3,3,7$ & $3,4,4$ & $1008$\\
			(c)&	$3,3,7$ & $2,6,6$ & $1008$\\
			$26(a)$ &	$3,3,4$ & $2,2,2,3$ & $2304$\\
			(b)&	$3,3,4$ & $3,4,4$ & $2304$\\
			(c)&	$3,3,4$ & $2,6,6$ & $2304$\\
			$27$ &	$3,3,9$ & $2,5,5$ & $1440$\\
			$28$ &	$3,3,5$ & $2,5,5$ & $2400$\\
			$29$ &	$3,3,27$ & $2,5,5$ & $1080$\\
			$30(a)$ &	$2,5,5$ & $2,2,2,3$ & $1920$\\
			(b)&	$2,5,5$ & $3,4,4$ & $1920$\\
			(c)&	$2,5,5$ & $2,6,6$ & $1920$\\
			$31$ &	$3,3,15$ & $2,5,5$ & $1200$\\
			$32(a)$ &	$2,5,5$ & $2,2,2,4$ & $1280$\\
			(b)&	$2,5,5$ & $4,4,4$ & $1280$\\
			(c)&	$2,5,5$ & $2,8,8$ & $1280$\\
			$33$ &	$3,3,4$ & $2,5,5$ & $3840$\\
			$34$ &	$3,3,7$ & $2,5,5$ & $1680$\\
			\hline
		\end{tabular}
		\caption{}
		\label{tab: tabellaA1}
	\end{minipage}%
	\begin{minipage}[t]{0.5\textwidth}
		\vspace{0pt}
		\centering
		\begin{tabular}{|c|c|c| c|}
			\hline
			$no.$ & $t_1$ & $t_2$ & $\vert G'\vert$ \\
			\hline
			\hline
			$35(a)$ &	$3,3,4$ & $2,2,2,4$ & $1536$\\ 
			(b)&	$3,3,4$ & $4,4,4$ & $1536$\\
			(c)&	$3,3,4$ & $2,8,8$ & $1536$\\
			$36$ &	$3,3,4$ & $3,7,7$ & $1008$\\  
			$37$ & $3,3,5$ & $3,3,5$ & $1800$ \\ 
			
			$38$	& $3,3,4$ & $3,3,9$ & $1728$\\

			$39$ &	$3,3,4$ & $3,3,27$ & $1296$\\
			$40$ &	$2,5,5$ & $3,5,5$ & $1200$\\
			$41$ &	$3,3,4$ & $3,3,11$ & $1584$\\   
			$42$ &	$3,3,6$ & $3,3,7$ & $1008$\\
			$43$ &	$3,3,4$ & $3,3,15$ & $1440$\\
			$44$ &	$3,3,4$ & $2,2,3,3$ & $1152$\\
			$45(a)$ &	$3,3,4$ & $2,2,3,3$ & $1152$\\
			(b)&	$3,3,4$ & $3,6,6$ & $1152$\\

			$46$ &	$3,3,4$ & $2,2,3,3$ & $1152$\\
			
			$47$ &	$3,3,5$ & $3,3,9$ & $1080$\\

			$48$ &	$3,3,5$ & $3,3,7$ & $1260$\\
			
			$49$ &	$3,3,5$ & $3,3,6$ & $1440$\\
			
			$50$	& $3,3,4$ & $3,3,7$ & $2016$\\
			$51$ &	$3,3,4$ & $3,3,4$ & $4608$\\
			$52$ &	$2,5,5$ & $2,5,5$ & $3200$\\
			$53$ &	$3,3,4$ & $3,3,6$ & $2304$\\
			$54$ &	$3,3,4$ & $3,3,5$ & $2880$\\
			$55$ &	$2,2,2,3$ & $5,5,5$ & $720$\\ 
			$56$ &	$2,2,2,3$ & $2,2,2,4$ & $1152$\\ 
			$57$ &	$2,2,2,4$ & $4,4,4$ & $768$\\
			$58$ &	$2,2,2,3$ & $2,2,2,6$ & $864$\\
			$59$ &	$2,2,2,3$ & $2,2,2,10$ & $720$\\
			
			$60$ &	$2,2,2,3$ & $2,2,2,3$ & $1728$\\
			$61$ &	$4,4,4$ & $4,4,4$ & $768$\\
			$62$ &	$2,2,2,3$ & $4,4,4$ & $1152$\\
			\hline
		\end{tabular}
		\caption{}
		\label{tab: tabellaB1}
	\end{minipage}
\end{table}
	\begin{remark}\label{rem: excludePG_G'}
		We run \textit{HowToExcludePG} on the list of Tables \ref{tab: tabellaA1} and \ref{tab: tabellaB1} to see that there are no compatible perfect groups $G'$. 
	\end{remark}
\underline{from $no.19$ to $no.36$, and $no.55$}
\\
From Remark \ref{rem: remind_order_abel}, we see that triples of Tables \ref{tab: tabellaA1} and \ref{tab: tabellaB1} from $no.19$ to $no.36$ (with the exception of $no. 19(c),20(c),21(c),25(c),26(c)$) together with $no.55$ have $G'$ forced to be a perfect group, which is a contradiction with Remark \ref{rem: excludePG_G'}. 
\\
We run $HowToExclude$ on $no. 19(c),20(c),21(c),25(c)$ to prove that there are no groups compatible with those algebraic data. 
\\
Instead, we exclude $26(c)$ using the following 
\begin{remark}\label{rem: 768,4^3}
	From \cite[Lemma 4.11]{BCG08}, there are no groups of order $768$ having a spherical system of generators of signature $[4,4,4]$.
\end{remark}
 Indeed, we would get $G''=[G',G']$ of $26(c)$ is a group of order $768$ and from Proposition \ref{prop: intermediate_quotients} it should admit a spherical system of generators of signature $[4,4,4]$. 
 
 We have excluded all cases from $no.19$ to $no.36$ together with $no.55$ of Tables \ref{tab: tabellaA} and \ref{tab: tabellaB}. 

$\underline{no. 53,57,61}$
\\
Here $no. 53$ of Table \ref{tab: tabellaB} can be excluded by using Remark \ref{rem: 768,4^3} and the same argument of $26(c)$ applied to $no.53$ of Table \ref{tab: tabellaB1}. 
We also automatically exclude $no.57$ and $no.61$ of Table \ref{tab: tabellaB} by using Remark \ref{rem: 768,4^3} applied to $no.57$ and $no.61$ to Table \ref{tab: tabellaB1}. 

\underline{from $no.37$ to $no.49$, and $no.56,58,59,60,62$}
\\
We run $HowToExclude$ on the corresponding triples of Table \ref{tab: tabellaB1} to see that there are only $10$ groups, those of Table \ref{tab: 10groups}, 
compatible with the algebraic data.
\begin{table}[h]
	\begin{tabular}{|c|c|c| c|}
		\hline
		$no.$ & $t_1$ & $t_2$ & $ G'$ \\
		\hline
		\hline
		$58$ & $2,2,2,3$ & $2,2,2,6$ & $G(864,2225)$ \\  
		$58$ & $2,2,2,3$ & $2,2,2,6$ & $G(864,4175)$ \\ 
		$59$ & $2,2,2,3$ & $2,2,2,10$ & $G(720,764)$ \\ 
		$59$ & $2,2,2,3$ & $2,2,2,10$ & $G(720,771)$ \\ 
		$60$ & $2,2,2,3$ & $2,2,2,3$ & $G(1728,12317)$ \\ 
		$60$ & $2,2,2,3$ & $2,2,2,3$ & $G(1728,32133)$ \\ 
		$60$ & $2,2,2,3$ & $2,2,2,3$ & $G(1728,46099)$ \\ 
		$60$ & $2,2,2,3$ & $2,2,2,3$ & $G(1728,46119)$ \\ 
		$60$ & $2,2,2,3$ & $2,2,2,3$ & $G(1728,47853)$ \\ 
		$60$ & $2,2,2,3$ & $2,2,2,3$ & $G(1728,47900)$ \\ 
		\hline
	\end{tabular}
\caption{}
\label{tab: 10groups}
\end{table}
However, we run $ExistingSurfaces$ on this list to cheek that there are not surfaces isogenous to a product. 
\vspace{0,3cm}

For the remaining four cases $\underline{no.50,51,52,54}$ we remind Remark \ref{rem: excludePG_G'} and so we apply Proposition \ref{prop: intermediate_quotients} to the commutator $G''\lhd G'$, which has then to admit one of the algebraic data of Table \ref{tab: tabella_fourcases}.
\begin{table}[h]
		\vspace{0pt}
		\centering
		\begin{tabular}{|c|c|c| c|}
			\hline
			$no.$ & $t_1$ & $t_2$ & $\vert G''\vert$ \\
			\hline
			\hline
			$50$ & $4,4,4$ & $7,7,7$ & $672$ \\  
			$51$& $4,4,4$ & $4,4,4$ & $1536$ \\
			$52$ & $2^5$ & $2^5$ & $640$ \\
			$54$ &	$4,4,4$ & $5,5,5$ & $960$\\
			\hline
		\end{tabular}
		\caption{}
		\label{tab: tabella_fourcases}
\end{table}
We run $HowToExcludePG$ on this list to automatically exclude $no.50$ and $no.54$ of Table \ref{tab: tabellaB}. 

\underline{$no.52$}
\\
We run $HowToExclude$ and then $ExistingSurfaces$ for $no.52$ of Table \ref{tab: tabella_fourcases} to see that there are only four groups $G''(640,n)$ having a pair of spherical system of generators defining a product-quotient surface isogenous to product, where $n=7665, 8697, 12278, 15814$. 
\\
However, we remind that $G''$ has index $5$ in $G'$, which admits a spherical system of generators $[g_1,g_2,g_3]$ of signature $[2,5,5]$. Then $g_2\not\in G''$ and it has order $5$. This means from Remark \ref{rem: rem_group_theory}(1) that
\[
0\to G''\to G'\to \mathbb Z_5\to 0
\]
is a splitting exact sequence, 
so $G'=G''\rtimes_\phi \mathbb Z_5$ trough an automorphism $\phi$ of $G''$ of order $5$. We easily check that each of the obtained groups $G''(640,n)$ admits exactly four automorphisms of order $5$. However, for each of these automorphisms $\phi$ the semidirect product $G''(640,n)\rtimes_\phi \mathbb Z_5$ has abelianization $\mathbb Z_2^4\times \mathbb Z_5$, so no one of these groups can be $G'$ of $no. 52$ in Table \ref{tab: tabellaB1}, which has abelianization $\mathbb Z_5$.  
\\
This then excludes groups $G$ of $no.52$ of Table \ref{tab: tabellaB}. 

\underline{$no.51$}
\\
It remains to only discuss $no.51$ of Table \ref{tab: tabella_fourcases}. We run $ExSphSyst$ to each group of order 1536 to observe that
\begin{remark}
	There are no groups of order 1536 which admit a spherical system of generators of signature $[4,4,4]$. 
\end{remark}
\end{proof}
\begin{proposition}\label{prop: 63_and_64}
	There are no groups $G$ having group order and a pair of spherical systems of generators defining a product-quotient surface isogenous to a product and compatible with one of the triples $no.63$ and $no.64$ of Table \ref{tab: tabellaB}.
\end{proposition}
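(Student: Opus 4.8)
The plan is to handle the two remaining triples of Table \ref{tab: tabellaB}, namely $no.63$ with $t_1=t_2=[2,4,6]$ and $no.64$ with $t_1=t_2=[2,3,12]$, both having $|G|=2304=2^8\cdot 3^2$, by descending from $G$ to a normal subgroup of prime index. The essential point is that $2304$ lies outside the range in which \textit{ListGroups} and \textit{HowToExclude} can enumerate groups (this is precisely why these cases were skipped), so I would never analyze $G$ directly; instead I replace it by a proper normal subgroup whose order falls in the range handled by \textit{HowToExclude} (orders $\leq 2000$ and $\neq 1024$). First, by Lemma \ref{lem: Lemma_PerfectGroups_G} no perfect group is compatible with a triple of Table \ref{tab: tabellaB}, so in both cases $G^{ab}\neq 1$. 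I would then verify $\mathbb{T}(2,4,6)^{ab}\cong\mathbb Z_2\times\mathbb Z_2$ and $\mathbb{T}(2,3,12)^{ab}\cong\mathbb Z_6$, recovering items (4) and (5) of Remark \ref{rem: abelinizations_of_the_list}. Hence $G$ always has a normal subgroup $H$ of prime index $p$, with $p=2$ for $no.63$ and $p\in\{2,3\}$ for $no.64$, according to which prime divides $|G^{ab}|$.

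For $no.63$ I would apply Proposition \ref{prop: intermediate_quotients} with $p=2$ to each factor $[2,4,6]$. Since $2$ divides all three entries, the second bullet applies and $H$ (of order $1152$) admits, on each factor, a spherical system whose signature lies in $\{[2,6,6],[3,4,4],[2,2,2,3]\}$. I would feed the nine resulting signature pairs at order $1152$ to \textit{HowToExclude}, and on every group it returns run \textit{ExistingSurfaces} to check that no pair of spherical systems yields a free diagonal action, equivalently a surface isogenous to a product.

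For $no.64$ I would split according to the prime dividing $|G^{ab}|$. If $2\mid|G^{ab}|$ (the cases $\mathbb Z_2$ and $\mathbb Z_6$), then since $2\nmid 3$ the first bullet of Proposition \ref{prop: intermediate_quotients} yields a normal subgroup $H$ of order $1152$ with signature $[3,3,6]$ on each factor; if $G^{ab}\cong\mathbb Z_3$, then since $3\nmid 2$ the first bullet yields $H$ of order $768$ with signature $[2,2,2,4]$ on each factor. Running both branches covers all admissible abelianizations. I would again process each branch with \textit{HowToExclude} followed by \textit{ExistingSurfaces}; and whenever a configuration of signature $[4,4,4]$ at order $768$ surfaces during a further descent, Remark \ref{rem: 768,4^3} gives an immediate contradiction, exactly as in the treatment of $no.26(c),53,57,61$.

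The main obstacle I anticipate is twofold. First, the descent must cover \emph{every} admissible abelianization type at once, so that excluding the finitely many candidate subgroups $H$ genuinely excludes all $G$; this is why both primes $p=2,3$ are needed for $no.64$. Second, the orders $1152$ and $768$ contain enormously many isomorphism types, so \textit{HowToExclude} may still return candidate subgroups rather than the empty list; in that event I would iterate the descent to the second commutator $G''\lhd G'$, exactly as in the proof of Proposition \ref{prop: from_19_to_62} for the cases $no.50$--$no.54$, pruning at each stage with the abelianization constraint of Remark \ref{rem: remind_order_abel} (and with \textit{HowToExcludePG} whenever the intermediate group is forced to be perfect), and closing with \textit{ExistingSurfaces}. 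The delicate part is controlling this branching of signatures and keeping each intermediate order inside the tractable range, rather than any single computation.
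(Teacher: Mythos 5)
Your proposal follows essentially the same route as the paper: it uses Lemma \ref{lem: Lemma_PerfectGroups_G}, Remark \ref{rem: abelinizations_of_the_list} and Remark \ref{rem: rem_group_theory}(2) to produce a normal subgroup $H$ of index $2$ (for $no.63$) or of index $2$ or $3$ (for $no.64$), descends via Proposition \ref{prop: intermediate_quotients} to exactly the same signature data (pairs from $\{[2,6,6],[3,4,4],[2,2,2,3]\}$ at order $1152$, $[3,3,6]$ at order $1152$, and $[2,2,2,4]$ at order $768$), and then runs \textit{HowToExclude} followed by \textit{ExistingSurfaces}. The paper's computation at this stage returns $90$ compatible groups, all of which are eliminated by \textit{ExistingSurfaces}, so your fallback of iterating the descent to deeper commutator subgroups turns out to be unnecessary.
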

\begin{proof}
	From remarks \ref{rem: abelinizations_of_the_list}, \ref{lem: Lemma_PerfectGroups_G}, and \ref{rem: rem_group_theory}(2), then $G$ of $no.63$ admits a normal subgroup $H$ of index $2$, whilst $G$ of $no.64$ admits a normal subgroup $H$ of index either $2$ of $3$. We apply Proposition \ref{prop: intermediate_quotients} to $H$, which has then one of the following algebraic data:
\begin{table}[h]
	\begin{minipage}[t]{0.5\textwidth}
		\vspace{0pt}
		\centering
		\begin{tabular}{|c|c|c| c|}
			\hline
			$no.$ & $t_1$ & $t_2$ & $\vert H\vert$ \\
			\hline
			\hline
			$63$ & $2,2,2,3$ & $2,2,2,3$ & $1152$ \\  
			$63$& $2,2,2,3$ & $3,4,4$ & $1152$ \\
			$63$ & $2,2,2,3$ & $2,6,6$ & $1152$ \\
			$63$ &	$3,4,4$ & $3,4,4$ & $1152$\\
			\hline
		\end{tabular}
		\caption{}
		\label{tab: H_no.63.64_1}
	\end{minipage}%
	\begin{minipage}[t]{0.5\textwidth}
		\vspace{0pt}
		\centering
		\begin{tabular}{|c|c|c| c|}
			\hline
			$no.$ & $t_1$ & $t_2$ & $\vert H\vert$ \\
			\hline
			\hline
			$63$ &	$3,4,4$ & $2,6,6$ & $1152$\\
			$63$ &	$2,6,6$ & $2,6,6$ & $1152$\\
			$64$ &	$3,3,6$ & $3,3,6$ & $1152$\\
			$64$ &	$2,2,2,4$ & $2,2,2,4$ & $768$\\
			\hline
		\end{tabular}
		\caption{}
		\label{tab: H_no.63.64_2}
	\end{minipage}
\end{table}

We run $HowToExclude$ on this list to see that there are $90$ groups compatible with one of the algebraic data. However, we then run $ExistingSurfaces$ to check that there are no surfaces isogenous to a product. 
\end{proof}
\begin{proposition}\label{prop: 65}
	There are no groups $G$ having group order and a pair of spherical systems of generators defining a product-quotient surface isogenous to a product and compatible with the triple $no.65$ of Table \ref{tab: tabellaB}.
\end{proposition}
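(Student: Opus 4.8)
The plan is to exploit that a group $G$ realizing $no.65$ must be a $2$-group of order $2^{10}$, and then to descend to a normal subgroup of much smaller order to which the group database applies. Since the signature is $[2,4,8]$, a spherical system $[g_1,g_2,g_3]$ satisfies $g_3=(g_1g_2)^{-1}$, so $G=\langle g_1,g_2\rangle$ is $2$-generated; as $G$ is a $2$-group this forces $G/\Phi(G)\cong\mathbb Z_2^2$, whence $G$ has exactly three maximal subgroups, all normal of index $2$. (This already sharpens Remark \ref{rem: abelinizations_of_the_list}(6): a nontrivial $p$-group is never perfect and $2$-generation excludes the cyclic options, leaving $G^{ab}\in\{\mathbb Z_2^2,\mathbb Z_4\times\mathbb Z_2\}$.)

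First I would record, for each curve separately, how Proposition \ref{prop: intermediate_quotients} with $p=2$ behaves on the three maximal subgroups. Writing $\bar g_1,\bar g_2,\bar g_3$ for the images in $G/\Phi(G)\cong\mathbb Z_2^2$, we have $\bar g_1,\bar g_2$ a basis and $\bar g_3=\bar g_1+\bar g_2$, so all three are nonzero and distinct; hence for each maximal subgroup exactly two of $g_1,g_2,g_3$ lie outside it. Running through the three nonzero functionals $\mathbb Z_2^2\to\mathbb Z_2$, one checks that the three maximal subgroups produce, in bijection, the signatures $[4,4,4]$, $[2,8,8]$ and $[2,2,2,4]$, the last arising exactly when the order-$2$ generator stays inside. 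The crucial point of this bookkeeping is that each curve designates only one \emph{bad} maximal subgroup, namely the one giving the all-even four-point signature $[2,2,2,4]$, which admits no further $p=2$ descent. Since there are three maximal subgroups and at most two bad ones, there is a maximal subgroup $H\lhd G$, of order $512$, for which both curves acquire a signature in $\{[4,4,4],[2,8,8]\}$.

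Next I would iterate. Each of $[4,4,4]$, $[2,8,8]$ makes $H$ again a $2$-generated $2$-group with three maximal subgroups, and Proposition \ref{prop: intermediate_quotients} (again $p=2$) sends $[4,4,4]\mapsto[2,2,4,4]$ for every maximal subgroup and $[2,8,8]\mapsto[4,8,8]$ or $[2,2,4,4]$. Choosing any maximal subgroup $K\lhd H$ of order $256$, both curves thus acquire a signature in $\{[2,2,4,4],[4,8,8]\}$. Because a free $G$-action restricts to free actions of the subgroups $H$ and $K$, a genuine surface isogenous to a product for $G$ would force $K$ to carry a pair of spherical systems, of one of the finitely many signature pairs just listed, defining a surface isogenous to a product. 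I would then feed each of these order-$256$ pairs to \textit{HowToExclude}, enumerate the finitely many compatible groups of order $256$ (all contained in the SmallGroups library), and finally apply \textit{ExistingSurfaces} to check that none of the candidates admits a free pair of spherical systems. The absence of any such $K$ contradicts the existence of $G$ and proves the proposition.

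The main obstacle is exactly the emergence of the irreducible all-even signatures $[2,2,2,4]$ and $[2,2,4,4]$: for these Proposition \ref{prop: intermediate_quotients} provides no further $p=2$ step, so one cannot naively keep shrinking the group, and the raw orders $1024$ and $512$ are too large for a direct enumeration. The resolution lies in the two structural observations above—that $2$-generation pins the number of maximal subgroups to three, and that each curve contributes only a single bad maximal subgroup—which together guarantee a descent route that stays inside the reducible signatures until the order drops to $256$, where the group database can finish the argument; the remaining work is the routine but indispensable verification via \textit{HowToExclude} and \textit{ExistingSurfaces}.
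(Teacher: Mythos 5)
Your structural setup is correct, and it even sharpens the paper's case analysis: since $G$ is a $2$-generated $2$-group of order $1024$, it has exactly three maximal subgroups, each curve's spherical system of signature $[2,4,8]$ designates exactly one \emph{bad} maximal subgroup (the unique one containing the order-$2$ generator, which yields $[2,2,2,4]$), so some index-$2$ subgroup $H$ of order $512$ simultaneously carries signatures in $\{[4,4,4],[2,8,8]\}$ for both curves. That is a genuine refinement of Table \ref{tab: H_no.65}, which lists all six unordered pairs. Note, however, that your premise that order $512$ is computationally out of reach is mistaken: MAGMA's small-group database does contain the groups of order $512$, and the paper's proof runs \textit{HowToExclude} and \textit{ExistingSurfaces} directly at that order.

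The fatal gap is in the last step, and it is not a matter of an unchecked computation that might go either way: the emptiness you need at order $256$ is provably false. The paper's own proof finds exactly three groups $H(512,n)$, $n=325,335,351$, of order $512$ admitting pairs of spherical systems of signature $([4,4,4],[4,4,4])$ that define a \emph{free} diagonal action on $C_1\times C_2$. Each such $H(512,n)$ is itself a $2$-generated $2$-group; every maximal subgroup $K\leq H(512,n)$ (of order $256$) inherits a free action on the same $C_1\times C_2$, and by Proposition \ref{prop: intermediate_quotients} (with $p=2$, where the genus-$\frac{p-1}{2}$ alternative is impossible) both quotients $C_i/K$ are again $\mathbb P^1$ with signature $[2,2,4,4]$. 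Hence groups of order $256$ with free $([2,2,4,4],[2,2,4,4])$-actions do exist, your enumeration returns a nonempty list, and no contradiction arises. The underlying logical problem is that the descent is only a necessary condition and is not reversible: survivors at order $256$ (or $512$) say nothing about existence at order $1024$, and descending further only weakens the condition. This is precisely why the paper cannot stop at order $512$ either: having found the three survivors, it must lift back up. Since the signature $[4,4,4]$ for $H$ forces the order-$2$ generator $g_1$ to lie outside $H$, Remark \ref{rem: rem_group_theory}(1) gives a splitting $G=H(512,n)\rtimes_\phi\mathbb Z_2$; the paper then enumerates the order-$2$ automorphisms $\phi$ compatible with the abelianization constraint and runs \textit{ExistingSurfaces} on the resulting order-$1024$ candidates to exclude them all. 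Any correct proof along your lines needs such a reconstruction step from the intermediate survivors to the putative $G$; without it, the argument cannot close.
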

\begin{proof}
	From remarks \ref{rem: abelinizations_of_the_list}, \ref{lem: Lemma_PerfectGroups_G}, and \ref{rem: rem_group_theory}(2), then $G$ of $no.65$ admits a normal subgroup $H$ of index $2$. We apply Proposition \ref{prop: intermediate_quotients} to $H$, which has then one of the algebraic data of Table \ref{tab: H_no.65}. 
	\begin{table}[h]
		\vspace{0pt}
		\centering
		\begin{tabular}{|c|c|c| c|}
			\hline
			$no.$ & $t_1$ & $t_2$ & $\vert H\vert$ \\
			\hline
			\hline
			$65$ & $2,2,2,4$ & $2,2,2,4$ & $512$ \\  
			$65$& $2,2,2,4$ & $4,4,4$ & $512$ \\
			$65$ & $2,2,2,4$ & $2,8,8$ & $512$ \\
			$65$ &	$4,4,4$ & $4,4,4$ & $512$\\
			$65$ &	$4,4,4$ & $2,8,8$ & $512$\\
			$65$ &	$2,8,8$ & $2,8,8$ & $512$\\
			\hline
		\end{tabular}
		\caption{}
		\label{tab: H_no.65}
	\end{table}
We run $HowToExclude$ and then $ExistingSurfaces$ on Table \ref{tab: H_no.65} to see that there are only three groups $H(512,n)$ having a pair of spherical system of generators of signature $[4,4,4]$ defining a product-quotient surface isogenous to a product, where $n=325,335,351$. 
 \\
 Since $G$ admits a spherical system of generators $[g_1,g_2,g_3]$ of signature $[2,4,8]$ and $H(512,n)$ has signature $[4,4,4]$, then $g_1\notin H(512,n)$, so that from Remark \ref{rem: rem_group_theory}(1) 
 \[
 0\to  H(512,n)\to G\to \mathbb Z_2\to 0
 \] 
 is a splitting exact sequence. In other words, $G=H(512,n)\rtimes_\phi\mathbb Z_2$ via an automorphism $\phi$ of order $2$. For each of these three groups $H(512,n)$, there are respectively $128$,$384$,$128$  automorphisms $\phi$ of order two such that $H(512,n)\rtimes_\phi\mathbb Z_2$ has abelianization that is a quotient of $\mathbb Z_4^2$. In particular, for each of of these automorphisms the abelianization is $\mathbb Z_2\times \mathbb Z_4$. Furthermore, each of these groups admits a spherical system of generators of signature $[4,4,4]$. 
 \\
 However, we run $ExisitngSurfaces$ to prove that no one of them gives a product-quotient surface isogenous to a product. 
\end{proof}

\subsection{Rational (-1) curves on product-quotient surfaces}\label{subsec: rational_curves_on_PQ}
In this short subsection we investigate which surfaces among those obtained in Theorem \ref{thm: intro_teo_classif_pg=3} do not contain $(-1)$-curves, namely smooth rational curves with self-intersection $-1$. First of all we observe that 
\begin{remark}
	All surfaces $S$ obtained in Theorem \ref{thm: intro_teo_classif_pg=3} are surfaces of general type. Indeed, from Enriques-Kodaira classification of complex algebraic surfaces, if $q(S)$ is zero, then either $S$ is rational,  of $S$ is of general type, or $K^2_S\leq 0$. Therefore, since surfaces of Theorem \ref{thm: intro_teo_classif_pg=3} have $K^2_S\geq 23$, $q(S)=0$, and  $p_g(S)=3\neq 0$, then they are of general type.    
\end{remark} 

\begin{proposition}\label{prop: minimality_lemma}\cite[Lemma 6.9]{BP16}
Let $S$ be  a product-quotient surface of general type of quotient model $X$. Assume that the exceptional locus of the minimal resolution of singularities $\rho \colon S\to X$ consists of 
\begin{itemize}
	\item[i)] curves of self-intersection $(-3)$ and $(-2)$, or 
	\item[ii)] at most two smooth rational curves of self-intersection $(-3)$ or $(-4)$, and $(-2)$-curves.
\end{itemize}  
Then $S$ is minimal, so it does not contain $(-1)$-curves.  
\end{proposition}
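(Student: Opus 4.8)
The goal is to prove that $S$ contains no $(-1)$-curve, since a surface of general type is minimal exactly when it has none; so suppose for contradiction that $E\subset S$ is a $(-1)$-curve. I would begin by recording two global facts. Since $C_1,C_2$ have genus $\geq 2$, their canonical bundles are ample, hence $K_{C_1\times C_2}=\mathrm{pr}_1^*K_{C_1}+\mathrm{pr}_2^*K_{C_2}$ is ample; and $C_1\times C_2$ contains \emph{no} rational curve, because any irreducible curve in the product that is not a point surjects onto some factor, and its normalization then dominates a curve of genus $\geq 2$. As the quotient map $\pi\colon C_1\times C_2\to X$ is finite and surjective with fixed locus only a finite set of points, the action is free in codimension one, so $\pi^*K_X=K_{C_1\times C_2}$ as $\mathbb{Q}$-divisors; ampleness descends along finite surjective morphisms, whence $K_X$ is ample and $\pi$ is étale over $X\setminus\Sing(X)$.

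Next I would locate $E$ relative to $\rho$. The exceptional curves of the minimal resolution of a cyclic quotient singularity form a Hirzebruch--Jung string of smooth rational curves of self-intersection $-b_i$ with all $b_i\geq 2$, so none is a $(-1)$-curve; thus $E$ is not $\rho$-exceptional and $R:=\rho(E)$ is an irreducible rational curve on $X$. I claim $E$ must meet the exceptional locus $\mathcal E:=\rho^{-1}(\Sing X)$: otherwise $\rho$ is an isomorphism near $E$, so $R\cong\mathbb{P}^1$ lies in the smooth locus, and $\pi^{-1}(R)\to R$ is an étale cover of degree $|G|$ of a simply connected curve, hence a disjoint union of copies of $\mathbb{P}^1$ inside $C_1\times C_2$, contradicting the absence of rational curves there.

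The numerical heart is as follows. Writing $K_S=\rho^*K_X+\sum_i a_iE_i$ with discrepancies $a_i\in(-1,0]$ (these singularities are log terminal, and the $a_i$ are computed from the chains $[b_1,\dots,b_l]$), intersecting with $E$ and using adjunction $K_S\cdot E=-1$ yields
\[
-1 = K_X\cdot R + \sum_i a_i\,(E_i\cdot E).
\]
Since $K_X\cdot R>0$, this forces $\sum_i a_i(E_i\cdot E)<-1$, and the plan is to show that assumptions (i) and (ii) make this impossible. Here I would exploit that $E$ is the strict transform of the rational curve $R$, so the intersection numbers $E_i\cdot E$ are governed by how $R$ passes through the finitely many singular points it meets: a single branch of $R$ through a cyclic quotient singularity attaches to the string at one (or, in tangent or multibranch cases, at few explicitly controlled) curves. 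Feeding this rigid intersection pattern into the discrepancies read off from strings whose $b_i$ lie in $\{2,3\}$ (case (i)), or equal $2$ except for at most two values in $\{3,4\}$ (case (ii)), one bounds $\sum_i a_i(E_i\cdot E)$ below by $-1$, the desired contradiction.

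The main obstacle is exactly this final step: one must control the discrepancies and the multiplicities $E_i\cdot E$ \emph{simultaneously}. The difficulty is genuine, since long chains of $(-3)$-curves have discrepancies tending to $-1$ (for instance the chain $[3,3,3]$ already produces a discrepancy $-5/7$), so a uniform bound like $a_i\geq-\tfrac12$ is false and a naive estimate fails. The argument must instead use the rigidity of how the strict transform of a \emph{smooth} or low-degree rational branch meets each string, together with the constraint that $R$ is rational with $K_X\cdot R$ small, which severely limits through how many singular points $R$ can pass and in what manner. Executing this finite case analysis over the singularity types allowed by (i) and (ii)---the only place where the hypotheses are used in an essential, non-formal way---is where the real content lies, the remaining steps being formal.
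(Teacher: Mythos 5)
Your proposal is not a proof: the step you yourself flag as ``where the real content lies'' --- showing that hypotheses (i) and (ii) are incompatible with $\sum_i a_i(E_i\cdot E)<-1$ --- is exactly the theorem, and the route you outline for it cannot succeed. The difficulty is not merely that discrepancies approach $-1$ (you note this); it is that the intersection-theoretic framework you set up contains no contradiction at all. Concretely, take hypothesis (i) with a singularity $\frac{1}{21}(1,8)$, whose Hirzebruch--Jung string is $[3,3,3]$ with discrepancies $-\frac{4}{7},-\frac{5}{7},-\frac{4}{7}$, and suppose the $(-1)$-curve $E$ meets the two end curves once each. Then $\sum_i(-a_i)(E_i\cdot E)=\frac{8}{7}$, so your displayed equation merely asserts $K_X\cdot R=\frac{1}{7}$, a perfectly admissible value since this singularity has index $7$; moreover the lattice spanned by $E,E_1,E_2,E_3$ is negative definite (leading minors $-1,2,-5,3$), so the Hodge index theorem is silent as well. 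Nothing you have written excludes this configuration, and no ``rigidity of how a branch meets a string'' will: a rational $R$ with two branches at the singular point produces exactly this local picture. What actually kills it is a global covering argument that your sketch never uses (you invoke the absence of rational curves on $C_1\times C_2$ only once, to show $E$ meets the exceptional locus): if $R$ meets $\Sing(X)$ in so few points, then the preimage $\pi^{-1}(R)$, normalized, is a cover of $E\cong\mathbb{P}^1$ branched over at most two points --- since $\pi$ is \'etale over $X\setminus \Sing(X)$ --- hence a union of rational curves inside $C_1\times C_2$, a contradiction. In general one needs the orbifold structure induced on $E$ by this cover to be hyperbolic, i.e. $\sum_b(1-1/m_b)>2$ over the branch points, and it is the interplay of this constraint with the discrepancy bookkeeping, carried out for the singularity types allowed by (i) and (ii), that constitutes the proof.

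Note also the object of comparison: the paper does not prove this proposition at all --- it imports it verbatim from [BP16, Lemma 6.9] --- so your attempt must stand on its own. Your preliminary reductions (no rational curves on $C_1\times C_2$, ampleness of $K_X$, $E$ not $\rho$-exceptional, the adjunction identity, the $-\frac{5}{7}$ computation) are all correct, but the essential mechanism of the cited proof, namely the branched-covering/hyperbolicity constraint on rational curves in the quotient model rather than nefness of $\rho^*K_X$ plus discrepancy estimates, is missing, and with it the entire content of the statement.
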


\begin{corollary}
	Let $S$ be a product-quotient surface belonging to one of the families of tables  \ref{table: class_PQ_pg=3} to  \ref{table: class_PQ_pg=3_12} of Theorem \ref{thm: intro_teo_classif_pg=3}. 
Then $S$ is a minimal surface. 
\end{corollary}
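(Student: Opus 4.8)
The plan is to read the exceptional locus of the minimal resolution $\rho\colon S\to X$ directly off the baskets recorded in the tables and to check, family by family, that this locus satisfies one of the two hypotheses of Proposition \ref{prop: minimality_lemma}. The essential input is the classical description of the minimal resolution of a cyclic quotient singularity: the point $\frac{1}{n}(1,a)$ is resolved by a Hirzebruch--Jung string, that is, a chain of smooth rational curves $E_1,\dots,E_l$ with $E_i^2=-b_i$, where $\frac{n}{a}=[b_1,\dots,b_l]$ is the very continued fraction appearing in the definition of the correction terms $k_x,e_x$ of \cite[Definition 1.5]{BP12}. Since the cyclic quotient singularities of $X$ are isolated and the resolution is local, the exceptional locus of $\rho$ is the disjoint union of these strings, one for each point of $\Sing(X)$; in particular it is completely determined by the basket.

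First I would tabulate the strings for every singularity type occurring in the baskets of tables \ref{table: class_PQ_pg=3} to \ref{table: class_PQ_pg=3_12}. These types are $\frac{1}{2}(1,1)$, $\frac{1}{3}(1,1)$, $\frac{1}{3}(1,2)$, $\frac{1}{4}(1,1)$ and $\frac{1}{4}(1,3)$, with continued fractions $2=[2]$, $3=[3]$, $\frac{3}{2}=[2,2]$, $4=[4]$ and $\frac{4}{3}=[2,2,2]$; hence they resolve, respectively, into a single $(-2)$-curve, a single $(-3)$-curve, a chain of two $(-2)$-curves, a single $(-4)$-curve and a chain of three $(-2)$-curves.

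Then I would split into cases according to the basket. When $K^2=32$ the basket is empty, so $\rho$ is an isomorphism and $S=X$ is a smooth surface isogenous to a product (the $G$-action being free); such a surface has ample canonical class and is therefore minimal. For the baskets of the form $\frac{1}{2}^k$, occurring for $K^2\in\{30,28,26,24\}$, the exceptional locus consists only of $(-2)$-curves, and for the baskets $\frac{1}{3}^a,\frac{2}{3}^a$, occurring for $K^2\in\{26,23\}$, it consists of $(-2)$- and $(-3)$-curves; in both situations hypothesis (i) of Proposition \ref{prop: minimality_lemma} applies. The remaining basket $\frac{1}{4}^2,\frac{3}{4}^2$, occurring for $K^2=24$, produces exactly two $(-4)$-curves together with $(-2)$-curves, so hypothesis (ii) applies. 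In each case Proposition \ref{prop: minimality_lemma} gives the minimality of $S$.

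The argument involves no genuine obstacle beyond this bookkeeping, and since the configuration of exceptional curves depends only on the basket, the verification is finite. The one point requiring attention is the basket $\frac{1}{4}^2,\frac{3}{4}^2$: there I must confirm that the number of curves of self-intersection $(-3)$ or $(-4)$ is at most two, which is precisely the count of $\frac{1}{4}(1,1)$ points, namely two, so that case (ii) rather than case (i) of Proposition \ref{prop: minimality_lemma} is the one that applies. Families whose baskets give rise to curves of self-intersection at most $-5$, or to more than two curves of self-intersection $(-3)$ or $(-4)$, fall outside the scope of Proposition \ref{prop: minimality_lemma}; these are exactly the ones collected separately in table \ref{table: class_PQ_pg=3_excep_minimality}, which is why they are excluded from the present statement.
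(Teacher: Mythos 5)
Your overall strategy --- reading the Hirzebruch--Jung strings off the baskets and feeding them into Proposition \ref{prop: minimality_lemma} --- is the same as the paper's, but your case analysis is incomplete in a way that breaks the proof. The list of singularity types you tabulate ($\tfrac{1}{2}(1,1)$, $\tfrac{1}{3}(1,1)$, $\tfrac{1}{3}(1,2)$, $\tfrac{1}{4}(1,1)$, $\tfrac{1}{4}(1,3)$) is not the full list occurring in tables \ref{table: class_PQ_pg=3} to \ref{table: class_PQ_pg=3_12}: those tables also contain the baskets $3/5^2$ (no. 175--185), $1/5,4/5$ (no. 186--196), $1/3,2/5^2,2/3$ (no. 282), $2/5^2,1/2^4$, $2/5^4$, $3/10^2,1/2^2$, $3/8^2,1/2,3/4$ and $3/8,1/2^4,5/8$. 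Most of these still land in case (i) or (ii) of Proposition \ref{prop: minimality_lemma} --- for instance $\tfrac{1}{5}(1,2)$ and $\tfrac{1}{5}(1,3)$ resolve into a $(-3)$- and a $(-2)$-curve, $\tfrac{1}{8}(1,3)$ into two $(-3)$-curves, and $\tfrac{1}{10}(1,3)$ into a $(-4)$- and two $(-2)$-curves --- so those only require extending your bookkeeping. The genuine problem is the basket $\{1/5,4/5\}$ of the rows no. 186--196 in table \ref{table: class_PQ_pg=3_5}: the singularity $\tfrac{1}{5}(1,1)$ is resolved by a single $(-5)$-curve, which satisfies neither hypothesis (i) nor hypothesis (ii) of Proposition \ref{prop: minimality_lemma}. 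These families lie inside the scope of the corollary, so your argument simply does not cover them.

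Your closing remark compounds the issue: you assert that all families whose baskets produce curves of self-intersection at most $-5$, or too many $(-3)$/$(-4)$-curves, are ``exactly'' the ones relegated to table \ref{table: class_PQ_pg=3_excep_minimality}, and hence excluded from the statement. That is false, as rows no. 186--196 show. The paper closes exactly this gap with a second, different tool: for the basket $\{1/5,4/5\}$ it invokes \cite[Prop. 4.7(3)]{BP12}, a minimality criterion tailored to that configuration, instead of Proposition \ref{prop: minimality_lemma}. Your proposal is missing this argument (or any substitute for it) for the $(-5)$-curve families, so as written it proves the corollary only for the remaining rows.
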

\begin{proof}
	For each case of tables  \ref{table: class_PQ_pg=3} to  \ref{table: class_PQ_pg=3_12} (with the exception of $no. 186$ to $196$) 
	 the exceptional curves arising from the basket of singularities of the quotient model $X$ are either of type $i)$ or $ii)$ of Proposition \ref{prop: minimality_lemma}, so that $S$ is minimal. 
	\\
	Regarding the remain cases $no.$ $186$ to $196$, their basket of singularities is always equal to  $\{1/5,4/5\}$, so the minimality follows directly by \cite[Prop. 4.7(3)]{BP12}. 
\end{proof}
\section{The degree of the canonical map of product-quotient surfaces}\label{sec: degree_can_map}
In this section we investigate the degree of the canonical map of product-quotient surfaces, with a particular focus to those having geometric genus three. 

We briefly explain the strategy and the content of each subsection but first we give the following 
\begin{remark}\label{rem: higest_value_Ksquare}
	The degree of the canonical map of product-quotient surfaces is bounded from above by $32$. Indeed, product-quotient surfaces satisfy the inequality $K_S^2\leq 8\chi(\mathcal O_S)$, see Theorem \ref{thm: invariants_PQ}, and so replacing Bogomolov-Miyaoka-Yau inequality with $K_S^2\leq 8\chi(\mathcal O_S)$ in the proof of \cite[Prop. 4.1]{beauville}, we get deg$(\Phi_S)\leq 8\chi(\mathcal O_S)/\left(\chi(\mathcal O_S)-3\right)\leq 32$. 
\end{remark}
Let us consider a product-quotient surface $S$ given by a pair of curves $C_1$ and $C_2$ and a finite group $G$ acting (faithfully) on both of them. 


The diagonal action of $G$ on the product $C_1\times C_2$ induces a representation of $G$ on the spaces of $2$-forms of $C_1\times C_2$. Let us denote by $\vert K_{C_1\times C_2}\vert^G$ the linear subsystem of the canonical linear system of $C_1\times C_2$ given by the subspace $H^{2,0}(C_1\times C_2)^G$ of the $G$-invariant $2$-forms.  

In Subsection \ref{subsec: deg_can_map} we show the relationship between the degree of the canonical map of $S$ and the (schematic) base locus of $\vert K_{C_1\times C_2}\vert^G$. Indeed, it holds 
\begin{equation}\label{eq: formula_deg_phiS}
	\deg(\Phi_{S})=\frac{1}{\vert G\vert\cdot \deg(\Sigma)}\cdot \widehat{M}^2,
\end{equation}
where $\Sigma$ is the image of the canonical map of $S$, and $\widehat{M}$ is the base-point free linear system obtained blowing-up the base locus of  $\vert K_{C_1\times C_2}\vert^G$. 
\\
Note that whenever $p_g(S)=3$, then the image of the canonical map is $\mathbb P^2$, a surface of degree $1$, and so the knowledge of the base locus of  $\vert K_{C_1\times C_2}\vert^G$ is enough to compute $\deg(\Phi_{S})$ automatically by using formula \eqref{eq: formula_deg_phiS}. 

The strategy to investigate the base locus of  $\vert K_{C_1\times C_2}\vert^G$ is the following. The action of $G$ induces a representation on the space of $1$-forms $H^{1,0}(C_i)$ via pullback, called in literature  $\textit{canonical representation}$. By standard representation theory, the space of $1$-forms splits as a direct sum of isotypic components $H^{1,0}(C_i)^\chi$, $\chi\in Irr(G)$ irreducible character of $G$. The irreducible characters $\chi$ occurring in the character $\chi_{can}$ of the canonical representation are explicitly computable by the Chevalley-Weil formula, see \cite[Subsection 1.3]{G16}. 

As a consequence of this, the space of invariant $2$-forms $H^{2,0}(C_1\times C_2 )^G$ splits as a direct sum of invariant subspaces $\left(H^{1,0}(C_1)^\chi\otimes H^{1,0}(C_2)^{\overline{\chi}}\right)^G$, $\chi\in Irr(G)$. Therefore, the base locus of $\vert K_{C_1\times C_2}\vert^G$ is simply the intersection of the base loci of such invariant subspaces and then a computation of them solves the problem. 

Let us consider the natural inclusion 
\begin{equation}\label{eq: inclusion}
	\left(H^{1,0}(C_1)^\chi\otimes H^{1,0}(C_2)^{\overline{\chi}}\right)^G \subseteq H^{1,0}(C_1)^\chi\otimes H^{1,0}(C_2)^{\overline{\chi}}.
\end{equation}
The main Theorem \ref{thm: base_locus_H10C1chi_otimes_H10C2_overline_chi} of Subsection \ref{subsec: base_locus_PQ} computes the base locus of the linear subsystem of the bigger subspace $H^{1,0}(C_1)^\chi\otimes H^{1,0}(C_2)^{\overline{\chi}}$, which is discovered to being pure in codimension 1 and union of fibres (with multiplicities) for the natural projections $C_1\times C_2\to C_i$, $i=1,2$. 
\\
The formula to compute explicitly these fibres and their multiplicities is given through Theorem \ref{thm: Base_locus_formula} in Subsection \ref{subsec: base_locus_formula}. This theorem provides the base locus of the subsystem of the canonical system of a Riemann surface $C$ given by an isotypic component $H^{1,0}(C)^\chi$ of the action of a finite group $G$ on $C$.

Notice that, whenever $\chi$ is of degree one, then \eqref{eq: inclusion} is an equality. Thus, if  $S$ satisfies \hyperlink{Property $\left(\#\right)$}{Property $\left(\#\right)$} mentioned at the introduction for which 
\[
\left(H^{1,0}(C_1)^\chi\otimes H^{1,0}(C_2)^{\overline{\chi}}\right)^G\neq 0 \implies \deg(\chi)=1
\]
for any irreducible character $\chi$, then we know $\vert K_{C_1\times C_2}\vert^G$ is spanned by $p_g$ divisors which decomposes as union of fibres for the natural projections $ C_1\times C_2\to C_i$, $i=1,2$. Since two fibres either do not intersect or they intersect transversally at a point, this makes the base locus of $\vert K_{C_1\times C_2}\vert^G$ explicit. 
\begin{remark}
	Observe that \hyperlink{Property $\left(\#\right)$}{Property $\left(\#\right)$} always holds for $G$ abelian group, and it is sometimes satisfied for other non-abelian groups, since we are only  interested to those characters of $G$ for which \eqref{eq: inclusion} is not zero. 
\end{remark}
\begin{remark}
	In terms of representation theory, \hyperlink{Property $\left(\#\right)$}{Property $\left(\#\right)$} translates as 
	\[
	\langle \chi_{can}^1,\chi\rangle\neq 0 \qquad \makebox{and} \qquad \langle \chi_{can}^2,\overline{\chi}\rangle \neq 0 \implies \deg(\chi)=1
	\]
	for each irreducible character $\chi$, where $\chi_{can}^i$ is the character of the canonical representation of $C_i$, $i=1,2$. 
	\\
	Thus, once $\chi_{can}^1$ and $\chi_{can}^2$ are determined  using the Chevalley-Weil formula, verifying whether \hyperlink{Property $\left(\#\right)$}{Property $\left(\#\right)$} holds reduces to a simple numerical computation.
\end{remark}
Let us suppose now  \hyperlink{Property $\left(\#\right)$}{Property $\left(\#\right)$}
 is satisfied, namely each irreducible character $\chi$ of $G$ for which $\eqref{eq: inclusion}$ is not zero has degree one. 
\\
In Subsection \ref{subsec: base_locus_PQ} we explain how to compute the self-intersection of the mobile part $M$ of $\vert K_{C_1\times C_2}\vert^G$.
\\
Note that the difference  $M^2-\widehat{M}^2$ is the sum of the correction terms arising from each isolated base-point of $M$. 
\\
 To finish the computation of the degree, whenever $p_g(S)=3$, we use iteratively for each base point of $M$ the Correction Term formula \ref{thm:Correction_term_formula} in Subsection \ref{subsec: correction_term_formula}, which provides the correction term of each base point to the difference $M^2-\widehat{M}^2$.
 Such formula is a generalization of the formula presented in \cite{FG21} and it seems of independent interest, so that it is presented in a more general setting.
 \\
Once we have determined both $M^2$ and   $M^2-\widehat{M}^2$, then the degree of the canonical map of $S$ is obtained by rearranging formula \eqref{eq: formula_deg_phiS} as follows 
\[
\deg(\Phi_{S})=\frac{1}{\vert G\vert}\cdot \left(M^2-\left(M^2-\widehat{M}^2\right)\right).
\]

\subsection{Isotypic components of canonical representations of actions on curves}\label{subsec: base_locus_formula}

Let $C$ be a Riemann surface, $G< \Aut(C)$ be a finite group, $C':=C/G$ its quotient, and let  $\lambda\colon C\to C'$  be the quotient map.  
\\
$G$ acts on $H^{1,0}(C)$ via the canonical representation:
\[
\left(g\cdot \omega\right)_p :=(dg^{-1})_p\omega_{g^{-1}\cdot p},
\]
Let us denote by $\chi_{can}$ the character of the canonical representation, which takes the name of \textit{canonical character}. 
The canonical representation can be splitted as a direct sum of irreducible representations:
\[
H^{1,0}(C)=\bigoplus_{\chi \in Irr(G)}H^{1,0}(C)^\chi.
\]
Here $H^{1,0}(C)^\chi$ is the \textit{isotypic component} of $H^{1,0}(C)$ of character $\chi$, namely it is a $G$-invariant subspace such that the restriction of the canonical representation is isomorphic to $\langle \chi_{can},\chi\rangle$-times the irreducible representation given by the character $\chi$. 
\\
In terms of characters, the above splitting translates as 
\[
\chi_{can}=\sum_{\chi \in Irr(G)}\langle \chi_{can}, \chi\rangle\cdot \chi. 
\]
We shall use the algorithm developed in \cite{G16} and implemented in the  computational algebra system MAGMA to compute the canonical character $\chi_{can}$ of any Galois branched covering. 

The aim of this section is to investigate the base locus of the associated subsystem $\vert K_{C}\vert^\chi$ given by the isotypic component $H^{1,0}(C)^\chi$. Let us give first some preliminary results. 

{\bf Notation:} Given a point $q\in C'$, the divisor $\lambda^{-1}(q)$ is considered with the reduced structure. 
\begin{lemma}\label{Lemma:BsInv}
	Consider a $G$-invariant subspace $W\subseteq H^{1,0}(C)$. 
	For any $p\in \lambda^{-1}(q)$, let $t_p$ be the minimal order of vanishing of a $1$-form in $\vert W\vert$ at $p$. Then all $t_p$ are equal to the same number, denoted by $t_q$. 
	Therefore the base locus of $\vert W\vert$ is a union of orbits
	\[
	Bs(|W|)= \sum_{q} t_q \lambda^{-1}(q).
	\]
	Furthermore, there exists a general form $\omega\in W$ vanishing of order exactly $t_q$ at each $p\in \lambda^{-1}(q)$. 
\end{lemma}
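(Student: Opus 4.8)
The plan is to exploit the two features that make $W$ special: it is $G$-invariant, and the fibres of $\lambda$ are exactly the $G$-orbits on $C$, since $\lambda\colon C\to C'=C/G$ is the quotient map and $G$ acts transitively on each fibre. The only analytic input required is that an automorphism $g\in G\subseteq\Aut(C)$ is a biholomorphism, hence preserves orders of vanishing of $1$-forms: with the canonical action $(g\cdot\omega)_p=(dg^{-1})_p\,\omega_{g^{-1}\cdot p}$, which is just the pullback $(g^{-1})^{*}\omega$, one has $\ord_p(g\cdot\omega)=\ord_{g^{-1}\cdot p}(\omega)$ for every $p\in C$ and every $\omega\in H^{1,0}(C)$.

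First I would prove that $t_p$ is constant along fibres. Fix $p,p'\in\lambda^{-1}(q)$. By transitivity of the $G$-action on the fibre there is $g\in G$ with $g\cdot p=p'$, so that $g^{-1}\cdot p'=p$. Choosing $\omega\in W$ with $\ord_p(\omega)=t_p$ and using the $G$-invariance of $W$, the form $g\cdot\omega$ again lies in $W$ and satisfies $\ord_{p'}(g\cdot\omega)=\ord_{g^{-1}\cdot p'}(\omega)=\ord_p(\omega)=t_p$; hence $t_{p'}\le t_p$. Applying the same argument to $g^{-1}$ gives the reverse inequality, so $t_p=t_{p'}$. This common value is the sought $t_q$.

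Next I would read off the base locus. For any point $P\in C$ the multiplicity of $Bs(|W|)$ at $P$ is by definition $m_P=\min_{0\neq\omega\in W}\ord_P(\omega)=t_{\lambda(P)}$, which depends only on $q=\lambda(P)$. Regrouping the fixed divisor $\sum_P m_P\,P$ according to fibres yields $Bs(|W|)=\sum_q t_q\,\lambda^{-1}(q)$, where the inner fibre is taken with its reduced structure as in the notation. Finiteness of this expression is immediate: picking a basis $\omega_1,\dots,\omega_k$ of $W$, one has $m_P=\min_i\ord_P(\omega_i)$, which vanishes outside the finite zero-loci of the $\omega_i$, so only finitely many $q$ contribute.

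Finally, for the general-form statement I would argue by a dimension count over $\CC$. Only the finitely many fibres with $t_q>0$, each consisting of finitely many points, need attention. For each such base point $p$ the condition $\ord_p(\omega)>t_q$ defines a proper linear subspace $U_p\subsetneq W$, proper precisely because $t_p=t_q$ is the minimal attained order. A finite union of proper subspaces cannot cover $W$, so a general $\omega\in W$ avoids every $U_p$ and hence vanishes to order exactly $t_q$ at each point $p$ of each fibre contributing to $Bs(|W|)$. I do not expect a genuine obstacle in this argument; the only points demanding care are confirming that $\lambda$ restricts to a transitive $G$-action on every fibre, including the branch fibres, and keeping the reduced-fibre convention consistent throughout.
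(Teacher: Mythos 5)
Your proposal is correct and follows essentially the same route as the paper: constancy of $t_p$ along fibres via $G$-invariance of $W$ and the fact that the canonical action preserves vanishing orders ($\ord_p(g\cdot\omega)=\ord_{g^{-1}\cdot p}(\omega)$), followed by a genericity argument for the last claim. The only cosmetic difference is in the final step, where the paper takes a generic linear combination of the finitely many forms $\omega_p$ realizing the minimal order, while you take a generic element of all of $W$ avoiding the finitely many proper subspaces $U_p$ — both rest on the same principle that a finite union of proper subspaces cannot cover a complex vector space.
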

\begin{proof}
	For every point $p\in \lambda^{-1}(q)$, it there exists a $1$-form $\omega_p$ in $W$ vanishing at $p$ with order $t_p$, by the definition of $t_p$. Given $g\in G$, then $g\cdot \omega_p$ belongs to the invariant subspace $W$ too, and it vanishes at $g\cdot p$ with multiplicity $t_p$, so that $t_{g\cdot p}\leq t_p$. Hence all $t_p$ are equal to the same number, denoted as $t_q$.  
	
	We observe that a generic linear combination $\omega$ of the obtained $\vert \lambda^{-1}(q)\vert$ $1$-forms $\omega_p$ vanishes with order $t_q$ at each point of $\lambda^{-1}(q)$. 
\end{proof}
\begin{remark}
	Let $\omega\in W$ be a $1$-form of the Lemma \ref{Lemma:BsInv}, with vanishing order $t_q$ at each point  $p\in \lambda^{-1}(q)$. Given $g\in G$, then $g\cdot \omega\in W$ is a $1$-form with vanishing order $t_q$ at each point  $p\in \lambda^{-1}(q)$. 
\end{remark}
Let $H^{1,0}(C)^\chi$ be the isotypic component of $H^{1,0}(C)$ of irreducible character $\chi$. 
\begin{lemma}\label{lem: f_merom_as_G-equiv_isom}
	Let $f\in \mathcal{M}(C/G)=\mathcal M(C)^G$ be  a non-zero  invariant meromorphic function. Denote by $ H^{1,0}(C)^\chi_f$ the subspace of $H^{1,0}(C)^\chi$ consisting of forms $\omega$ such that $f\omega$ is a holomorphic form. Then 
	\begin{equation}\label{eq: map_f_betw_H10(C)}
		f\colon H^{1,0}(C)^\chi_f\to f\cdot H^{1,0}(C)_f^\chi\subseteq H^{1,0}(C), \qquad \omega\mapsto f\omega
	\end{equation}
	is a $G$-equivariant isomorphism. In particular, $f\cdot H^{1,0}(C)_f^\chi$ is a $G$-invariant subspace of $H^{1,0}(C)^\chi$. 
\end{lemma}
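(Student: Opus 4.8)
The plan is to verify directly that $\mu_f\colon \omega\mapsto f\omega$ is a well-defined, injective, linear map which intertwines the two $G$-actions, and then to extract the ``in particular'' clause from the standard fact that a $G$-equivariant isomorphism preserves isotypic type. First I would observe that $\mu_f$ is well-defined by the very definition of $H^{1,0}(C)^\chi_f$: this subspace consists exactly of those $\omega$ for which $f\omega$ is holomorphic, so $\mu_f$ indeed lands in $H^{1,0}(C)$. Linearity is immediate, and injectivity follows because $f$ is a non-zero element of the field $\mathcal M(C)$: the meromorphic $1$-forms form a torsion-free module over $\mathcal M(C)$, so $f\omega=0$ with $f\neq 0$ forces $\omega=0$. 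Hence $\mu_f$ is an isomorphism onto its image $f\cdot H^{1,0}(C)^\chi_f$.

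The key step is $G$-equivariance. Recalling from the definition of the canonical representation that the action is pullback, $g\cdot\omega=(g^{-1})^{*}\omega$, and that $G$ acts on meromorphic functions by $g\cdot f=f\circ g^{-1}$, the multiplicativity of pullback gives $g\cdot(f\omega)=(g\cdot f)\,(g\cdot\omega)$. Since $f\in\mathcal M(C)^G$ is invariant we have $g\cdot f=f$, whence $g\cdot(f\omega)=f\,(g\cdot\omega)$, i.e. $\mu_f(g\cdot\omega)=g\cdot\mu_f(\omega)$ for every $g\in G$; this is precisely the asserted equivariance. At this point I would also spell out that $H^{1,0}(C)^\chi_f$ is itself $G$-invariant, so that the source of $\mu_f$ is a genuine subrepresentation: if $f\omega$ is holomorphic, then $f\,(g\cdot\omega)=g\cdot(f\omega)$ is holomorphic too, and since $g\cdot\omega$ stays in the isotypic component $H^{1,0}(C)^\chi$, it lies in $H^{1,0}(C)^\chi_f$.

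Finally, for the inclusion $f\cdot H^{1,0}(C)^\chi_f\subseteq H^{1,0}(C)^\chi$ I would argue representation-theoretically. As a subspace of the isotypic component $H^{1,0}(C)^\chi$, the domain $H^{1,0}(C)^\chi_f$ is a $G$-subrepresentation all of whose irreducible constituents have character $\chi$. A $G$-equivariant isomorphism carries it onto an isomorphic subrepresentation of $H^{1,0}(C)$, whose constituents therefore again all have character $\chi$; since $H^{1,0}(C)^\chi$ is by definition the sum of all irreducible subrepresentations of type $\chi$, the image must lie inside $H^{1,0}(C)^\chi$. I do not expect a genuine obstacle here: the only point requiring care is making the two $G$-actions (on functions and on forms) explicit enough that the invariance $g\cdot f=f$ can be applied cleanly to the product $f\omega$, which is bookkeeping rather than a real difficulty.
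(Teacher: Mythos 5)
Your proposal is correct and follows essentially the same route as the paper's proof: equivariance of multiplication by $f$ via the $G$-invariance of $f$, injectivity from $f\neq 0$, and containment of the image in the isotypic component $H^{1,0}(C)^\chi$ by the standard Schur-type fact that equivariant maps preserve isotypic type (which the paper cites simply as "Schur Lemma" and you spell out via irreducible constituents). The extra details you supply (explicit pullback action, torsion-freeness for injectivity, $G$-invariance of the source) are exactly the bookkeeping the paper's terser argument leaves implicit.
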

\begin{proof}
	$H^{1,0}(C)^\chi_f$ is $G$-invariant: given $g\in G$ and $\omega\in H^{1,0}(C)^\chi_f$, then $f(g\cdot \omega)=g\cdot (f\omega)$ is holomorphic since $f$ is $G$-invariant, and $f\omega$ is holomorphic. This shows immediately also that the map of $\eqref{eq: map_f_betw_H10(C)}$ is $G$-equivariant. From Schur Lemma, then the image of $\eqref{eq: map_f_betw_H10(C)}$  is contained in $H^{1,0}(C)^\chi$.
	However, $f$ is not the zero function, so \eqref{eq: map_f_betw_H10(C)} is injective.
\end{proof}
\begin{definition}\label{defn: k_p_Weierstrass}
	Let $X$ be a Riemann surface and $q\in X$. Let us define 
	\[
	k_q:=\min \left\{m\in \mathbb{N} : h^0(X, mq)\geq 2\right\}
	\]
	be the \textit{minimal non-gap of} $q$. 
	$k_q$ is therefore the smallest  number such that $X$ admits a non-constant meromorphic function $f$ with only one pole at $q$, of order $-k_q$. 
\end{definition}
\begin{remark}
	From Riemann-Roch theorem we have 
	\[
	h^0(X,(g(X)+1)q)=h^0(X, K-(g(X)+1)q)+2\geq 2.
	\]
	Therefore 
	\[
	k_q\leq g(X)+1.
	\]
	In other words, $k_q$ is the minimum of the complement of the set of the Weierstrass gaps for $q$. In particular, $k_q=g(X)+1$, if $q$ is not a Weierstrass point, or $k_q< g(X)+1$, otherwise. 
\end{remark}
Let $q\in C'$ be a branch point of $\lambda$. The stabilizers of the points lying on $q$ are cyclic subgroups of $G$ and they are conjugated to each other. Thus the order of the stabilizers depends only on $q$, denoted as $m_q$. 
\\
We remind the definition 
\vspace{0,3cm}

\begin{definition}
	Let us fix a point $p\in \lambda^{-1}(q)$. Given a generator $h$ of $Stab(p)$, there exists a coordinate $z$ in $C$ such that the action of $h$ in a neighborhood of $p$ corresponds to $z\to \lambda z$, where $\lambda$ is one of the $m_q$-roots of the unity. This gives a bijection among the primitive $m_q$-roots of the unity and the generators of $Stab(p)$. We denote by \textit{local monodromy} of $p$ the unique generator of $Stab(p)$ acting by $z\to e^{\frac{2\pi i }{m_q}}z$.
\end{definition}
\begin{remark}
	The \textit{local monodromy} of another point $g\cdot p$ over $q$ is the conjugate $ghg^{-1}$ of $h$. In other words, the \textit{local monodromy} of points lying over $q$ are conjugated to each other. 
\end{remark}

Lemma \ref{Lemma:BsInv} applies to $H^{1,0}(C)^\chi$, so the base locus of $\vert K_C\vert^\chi$ is 
\[
Bs(\vert K_C\vert^\chi)= \sum_{q} t_q^\chi \lambda^{-1}(q),
\]
for some natural integers $t_q^\chi$, that we still need to determine. 

We denote by $\rho_\chi$ an irreducible representation of $G$ of character $\chi$. 
\\
We have the following 
\begin{lemma}\label{lem: t_q_eq_to}
	Let us fix a point $q\in C/G$ of ramification index $m_q$. Let $h$ be the local monodromy of a point $p\in \lambda^{-1}(q)$, hence $o(h)=m_q$.  It there exists
	\[
	a_q^\chi \in \lbrace j\in [0,\dots, m_q-1] \colon e^{\frac{2\pi i}{m_q}j}\in Spec (\rho_\chi(h))\rbrace 
	\]
	and a non-negative integer $0\leq k_q^{\chi}<k_q\leq g(C/G)+1$ such that 
	\[
	t_q^\chi=m_q-a_q^\chi-1+ k_q^\chi m_q,
	\]
	where $k_q$ is the minimal non-gap of $q$ in the Definition \ref{defn: k_p_Weierstrass}. 
	\\
	The values $a_q^\chi$ and $k_q^\chi$ depends only from $q$ and $\chi$ and not by the choice of $p\in \lambda^{-1}(q)$.
\end{lemma}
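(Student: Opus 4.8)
The plan is to read off the invariant $t_q^\chi$ from the local behaviour of forms at a point $p\in\lambda^{-1}(q)$ under the action of the local monodromy $h$. Fix a coordinate $z$ near $p$ in which $h$ acts by $z\mapsto \zeta z$ with $\zeta:=e^{2\pi i/m_q}$, and recall from Lemma \ref{Lemma:BsInv} that $t_q^\chi$ is the common minimal vanishing order at every point of $\lambda^{-1}(q)$ of a form in $H^{1,0}(C)^\chi$, so it depends only on $q$ and $\chi$. Since $h$ fixes $p$, the subspaces $V_j:=\{\omega\in H^{1,0}(C)^\chi:\ \mathrm{ord}_p(\omega)\ge j\}$ form an $\langle h\rangle$-stable filtration whose graded pieces $V_j/V_{j+1}$ are at most one-dimensional, via the leading-coefficient map $\omega\mapsto(\text{coefficient of }z^j\,dz)$ with kernel $V_{j+1}$. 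A direct pullback computation shows that $h$ acts on the germ $z^j\,dz$ by the scalar $\zeta^{-(j+1)}$; hence whenever $V_j/V_{j+1}\neq 0$ the scalar $\zeta^{-(j+1)}$ occurs as an eigenvalue of $h$ on $H^{1,0}(C)^\chi$, that is, of $\rho_\chi(h)$, since $h$ is of finite order (hence semisimple) and $H^{1,0}(C)^\chi$ is isomorphic to a multiple of $\rho_\chi$.

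Applying this to $j=t_q^\chi$, which satisfies $V_{t_q^\chi}\neq V_{t_q^\chi+1}$ by minimality, yields $\zeta^{-(t_q^\chi+1)}\in\mathrm{Spec}(\rho_\chi(h))$. I would then define $a_q^\chi\in\{0,\dots,m_q-1\}$ and $k_q^\chi\ge 0$ by the Euclidean division $t_q^\chi=m_q-a_q^\chi-1+k_q^\chi m_q$, i.e. by taking $a_q^\chi$ to be the representative of $-(t_q^\chi+1)$ modulo $m_q$ and $k_q^\chi=\lfloor t_q^\chi/m_q\rfloor$. Because $\zeta^{m_q}=1$, one checks $\zeta^{a_q^\chi}=\zeta^{-(t_q^\chi+1)}$, so $a_q^\chi$ indeed lies in the prescribed set $\{j\in[0,\dots,m_q-1]:\ \zeta^{j}\in\mathrm{Spec}(\rho_\chi(h))\}$.

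The substantive remaining point is the bound $k_q^\chi<k_q$, which I would establish by contradiction using the $G$-equivariant multiplication map of Lemma \ref{lem: f_merom_as_G-equiv_isom}. Suppose $k_q^\chi\ge k_q$. By the Definition \ref{defn: k_p_Weierstrass} of the minimal non-gap there is $f\in\mathcal M(C/G)$ whose only pole is a pole of order $k_q$ at $q$; its pullback $\lambda^*f\in\mathcal M(C)^G$ is $G$-invariant and, as $\lambda$ has ramification index $m_q$ over $q$, has poles of order exactly $k_q m_q$ along $\lambda^{-1}(q)$ and is holomorphic elsewhere. Take a form $\omega\in H^{1,0}(C)^\chi$ vanishing to order exactly $t_q^\chi$ at every point of $\lambda^{-1}(q)$, as furnished by Lemma \ref{Lemma:BsInv}. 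From $k_q^\chi\ge k_q$ and $a_q^\chi\le m_q-1$ we get $t_q^\chi\ge k_q m_q$, so $(\lambda^*f)\,\omega$ is holomorphic everywhere, lies in $H^{1,0}(C)^\chi$ by Lemma \ref{lem: f_merom_as_G-equiv_isom}, is nonzero, and vanishes at $p$ to order $t_q^\chi-k_q m_q<t_q^\chi$. This contradicts the minimality of $t_q^\chi$, forcing $k_q^\chi<k_q$.

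Finally, the independence of $a_q^\chi$ and $k_q^\chi$ from the choice of $p$ is automatic: $t_q^\chi$ depends only on $q$ and $\chi$ by Lemma \ref{Lemma:BsInv}, hence so do $a_q^\chi$ and $k_q^\chi$ through the division above, while the local monodromies at distinct points of $\lambda^{-1}(q)$ are conjugate in $G$, so $\mathrm{Spec}(\rho_\chi(h))$ is unchanged. The hard part will be the bound $k_q^\chi<k_q$: it is the only place where the minimal non-gap genuinely enters, and it requires combining the pole-order bookkeeping under the ramified pullback with the equivariant multiplication of Lemma \ref{lem: f_merom_as_G-equiv_isom} while remaining inside the isotypic component $H^{1,0}(C)^\chi$.
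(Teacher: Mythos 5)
Your proof is correct and follows essentially the same route as the paper: both derive the congruence $t_q^\chi \equiv m_q - a_q^\chi - 1 \pmod{m_q}$ from the local action of $h$ on germs $z^j\,dz$ (the paper via the eigenspace decomposition of $H^{1,0}(C)^\chi$ under $h$, you via the $h$-stable order-of-vanishing filtration and its graded pieces, which is only a cosmetic variation), and both establish the crucial bound $k_q^\chi < k_q$ by the identical contradiction argument, multiplying a form of minimal vanishing order by the invariant meromorphic function with a single pole of order $k_q$ at $q$ and invoking Lemma \ref{lem: f_merom_as_G-equiv_isom}. The independence from the choice of $p$ is handled as in the paper, via Lemma \ref{Lemma:BsInv} and conjugacy of local monodromies.
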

\begin{proof}
	We observe that the action on $H^{1,0}(C)^\chi$ of $h$ is diagonalizable, and its spectrum is contained in the set of the $m_q$-roots of the unity. 
	Hence the action of $h$ decomposes $H^{1,0}(C)^\chi$ as
	\[
	H^{1,0}(C)^\chi= \bigoplus_{j=0}^{m_q-1}V_j,
	\]
	where $V_j$ is the eigenspace of eigenvalue $\xi^j$, and $\xi$ is  the first $m_q$-root of the unity ($V_j$ may be zero, whenever $\xi^j$ is not an eigenvalue of $h$).
	
	Let $\omega_j \in V_j$ be an eigenvector. We determine the vanishing order of $\omega_j$ at the point $p$. By definition of local monodromy, it there exists a local coordinate $z$ such that the action of $h$ in a neighborhood of $p$ is $z\mapsto \xi z$. We write $\omega_j=f(z)dz$ locally around this neighborhood of $p$. We get 
	\[
	\begin{split}
		\xi^jf(z)dz & =h\cdot (f(z)dz) \\
		& =(h^{-1})^*(f(z)dz) \\
		& =f(\xi^{m_q-1}z)\xi^{m_q-1}dz.
	\end{split}
	\]
	Hence $f$ satisfies $f(\xi^{m_q-1}z)= \xi^{j+1}f(z)$, forcing it to be $f=z^{m_q-j-1}g(z^{m_q})$, for some holomorphic function $g$. Hence $\ord_p(\omega_j)$ is congruent to $m_q-j-1$ modulo $m_q$.
	
	Applying Lemma \ref{Lemma:BsInv} to $W=H^{1,0}(C)^\chi$ we find a form $\omega\in H^{1,0}(C)^\chi$ with vanishing order $t_q^\chi$ at each point of $\lambda^{-1}(q)$.
	Let us write $\omega$ as a $\omega=\sum_{j=0}^{m_q-1} \omega_j$, with $\omega_j\in V_j$. Since each $\omega_j$ has different order at $p$, then
	\[
	\begin{split}
		t_q^\chi & =\ord_p(\omega)  = \min_{\omega_j\neq 0}\{\ord_p(\omega_j)\}.
	\end{split}
	\]
	In other words, it there exists $j_0\in [0, \dots, m_q-1]$ such that $t_q^\chi= \ord_p(\omega_{j_0})$.
	
	Since $\omega_{j_0}$ is an eigenvector of eigenvalue $\xi^{j_0}$, then $t_q^\chi=\ord_p(\omega_{j_0})$ is congruent to $m_q-j_0-1$ modulo $m_q$; let us say $t_q^\chi=m_q-j_0-1+k_{j_0}m_q$, for some non-negative integer $k_{j_0}$.
	
	We claim that $k_{j_0}<k_q$. By contradiction, if $k_{j_0}\geq k_q$, then we use the definition of $k_q$ to pick up a meromorphic  function  $f\in \mathcal M(C/G)=\mathcal M(C)^G$ with only one pole at $q$ of order $\ord_q(f)=-k_q$. In this case, then $f\omega$ is a holomorphic form. Indeed, by definition of $f$, the only poles of $f\omega$ that may occur lie on $\lambda^{-1}(q)$, but the order of $f\omega$ at each $g\cdot p\in \lambda^{-1}(q)$ is 
	\[\begin{split}
		\ord_{g\cdot p}(f\omega) & =\ord_{g\cdot p}(\omega)+\ord_{g\cdot p}(f) \\
		& =t_q^\chi-k_qm_q \\
		& =m_q-j_0-1+(k_{j_0}-k_q)m_q\geq 0.
	\end{split}\]
	Furthermore, from Lemma  \ref{lem: f_merom_as_G-equiv_isom}, then $f\omega\in H^{1,0}(C)^\chi$. However, this would contradict the definition of $t_q^\chi$, since $\ord_p(f\omega)=t_q^\chi-k_qm_q<t_q^\chi$. 
	
	To summarize, we have proved 
	\[
	t_q^\chi =m_q-j_0-1+k_{j_0}m_q,
	\]
	where $j_0$ is one of the integers such that $\xi^{j_0}\in Spec(\rho_\chi(h))$, and $k_{j_0}<k_q$. 
	
	It is straightforward to see that such integers $j_0$ and $k_{j_0}$ do not depend  from the choice of $p\in\lambda^{-1}(q)$.
\end{proof}
\begin{theorem}(Base locus formula)\label{thm: Base_locus_formula}
	The base locus of $\vert K_C\vert^\chi$ is 
	\[
	Bs(\vert K_C\vert^\chi)= \sum_{q} \left(m_q-a_q^\chi-1+ k_q^\chi m_q\right) \lambda^{-1}(q),
	\]
	where the non-negative integers $a_q^\chi$ and $k_q^\chi$ are those defined in Lemma \ref{lem: t_q_eq_to}. 
\end{theorem}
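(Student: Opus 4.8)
The plan is to obtain the statement as an immediate synthesis of the two preceding lemmas, since all the analytic work has already been carried out there. First I would observe that the isotypic component $H^{1,0}(C)^\chi$ is by construction a $G$-invariant subspace of $H^{1,0}(C)$, so Lemma \ref{Lemma:BsInv} applies verbatim with $W=H^{1,0}(C)^\chi$. This gives at once that the base locus of $\vert K_C\vert^\chi$ is supported on full fibres of $\lambda$ and has the shape
\[
Bs(\vert K_C\vert^\chi)=\sum_q t_q^\chi\,\lambda^{-1}(q),
\]
with a single multiplicity $t_q^\chi$ attached to each branch point $q$, independent of the chosen preimage $p\in\lambda^{-1}(q)$. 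This is exactly the structural reduction already recorded just before the statement.

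Second I would plug in the explicit value of each $t_q^\chi$ furnished by Lemma \ref{lem: t_q_eq_to}, namely $t_q^\chi=m_q-a_q^\chi-1+k_q^\chi m_q$, where $a_q^\chi$ and $k_q^\chi$ are the invariants attached to the local monodromy $h$ at $p$ and to the minimal non-gap $k_q$ of $q$. Substituting this into the display above yields precisely the claimed formula, so the proof is a one-line combination of Lemmas \ref{Lemma:BsInv} and \ref{lem: t_q_eq_to}.

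I expect the real content to sit entirely inside the ingredients rather than in the assembly. The delicate point is the determination of $t_q^\chi$ in Lemma \ref{lem: t_q_eq_to}: there one decomposes $H^{1,0}(C)^\chi$ into eigenspaces of the local monodromy $h$, reads off from the local action $z\mapsto\xi z$ that an eigenform of eigenvalue $\xi^j$ vanishes at $p$ to an order congruent to $m_q-j-1$ modulo $m_q$, and then bounds the multiple-of-$m_q$ part by showing $k_{j_0}<k_q$ via the meromorphic function with a single pole of order $k_q$ at $q$ together with the $G$-equivariant multiplication isomorphism of Lemma \ref{lem: f_merom_as_G-equiv_isom}. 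Once those facts are in hand, as they are, the Base locus formula carries no additional obstacle; the only point worth flagging in the write-up is that the integers $a_q^\chi$ and $k_q^\chi$ depend only on the pair $(q,\chi)$ and not on the chosen $p\in\lambda^{-1}(q)$, which is itself already part of the conclusion of Lemma \ref{lem: t_q_eq_to} and follows from the conjugacy of the local monodromies of the points lying over $q$.
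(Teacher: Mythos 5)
Your proposal is correct and is exactly the paper's argument: the structural reduction via Lemma \ref{Lemma:BsInv} applied to $W=H^{1,0}(C)^\chi$ is stated in the paper immediately before the theorem, and the paper's proof then consists of the single sentence that one applies Lemma \ref{lem: t_q_eq_to} to each $q\in C/G$. Your additional remarks correctly locate the genuine content in Lemma \ref{lem: t_q_eq_to} (the eigenvalue analysis of the local monodromy and the bound $k_q^\chi<k_q$ via Lemma \ref{lem: f_merom_as_G-equiv_isom}), matching the paper's structure.
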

\begin{proof}
	It suffices to apply Lemma \ref{lem: t_q_eq_to} to each point $q\in C/G$. 
\end{proof}
\begin{remark}\label{rem: direct_comp_t_qchi}
	Under suitable assumptions it is possible to determine exactly $a_q^\chi$ and $k_q^\chi$. 
	
	For instance, if $C/G\cong \mathbb P^1$, then $k_q=g(C/G)+1=1$, for any $q\in \mathbb P^1$. Hence $k_q^\chi=0$, and we get 
	\[
	t_q^\chi=m_q-a_q^\chi-1.
	\] 
	Moreover, if one of the following holds
	\begin{itemize}
		\item $\chi$ is an irreducible character of degree $1$, or
		\item the local monodromy $h$ is in the centre of $G$, 
	\end{itemize}
	then $\rho_\chi(h)=\frac{\chi(h)}{\chi(1)}\cdot \Id$ is a multiple of the identity. 
	
	This is obvious when the character has degree one. Instead, when the local monodromy is central, this is a result we take from  \cite{Cat18}.
	
	Under one of these two conditions, then
	$a_q^\chi\in [0,\dots, m_q-1]$ is the only integer such that $\chi(h)=e^{\frac{2\pi i}{m_q}a_{q}^\chi}\chi(1)$.
\end{remark}
%
We deduce then the following immediate consequence from Theorem \ref{thm: Base_locus_formula} and Remark \ref{rem: direct_comp_t_qchi}:
\begin{corollary}\label{cor: explicit_comp_base_locus_form}
	Assume $C/G\cong \mathbb P^1$, and $\chi$ is an irreducible character of degree $1$. Then 
	\[
	Bs(\vert K_C\vert^\chi)= \sum_{q} \left(m_q-a_q^\chi-1\right) \lambda^{-1}(q),
	\]
	where $a_q^\chi\in [0,\dots m_q-1]$ is the only non-negative integer such that $\chi(h)=e^{\frac{2\pi i }{m_q}a_q^\chi}$, with $h$ local monodromy of a point $p$ over $q$. 
\end{corollary}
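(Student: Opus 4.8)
The plan is to apply the Base locus formula (Theorem \ref{thm: Base_locus_formula}) and simply evaluate the two integers $a_q^\chi$ and $k_q^\chi$ occurring there under the two standing hypotheses, exactly as anticipated in Remark \ref{rem: direct_comp_t_qchi}.

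First I would dispose of $k_q^\chi$ using the assumption $C/G\cong \mathbb P^1$. By Definition \ref{defn: k_p_Weierstrass} the minimal non-gap of a point $q$ on $\mathbb P^1$ is $k_q=g(C/G)+1=1$, since already $h^0(\mathbb P^1,q)=2$. On the other hand, Lemma \ref{lem: t_q_eq_to} guarantees $0\leq k_q^\chi<k_q$, so the only possibility is $k_q^\chi=0$ for every branch point $q$. This already reduces the summand of Theorem \ref{thm: Base_locus_formula} to $m_q-a_q^\chi-1$.

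Next I would pin down $a_q^\chi$ using that $\chi$ has degree $1$. Since $\chi(1)=1$, the one-dimensional representation $\rho_\chi$ satisfies $\rho_\chi(h)=\chi(h)$, acting on its single eigenline as multiplication by the scalar $\chi(h)$. As $h$ is the local monodromy of a point over $q$, it has order $m_q$, so $\chi(h)$ is an $m_q$-th root of unity and there is a unique $a_q^\chi\in\{0,\dots,m_q-1\}$ with $\chi(h)=e^{\frac{2\pi i}{m_q}a_q^\chi}$. This is precisely the index singled out in Lemma \ref{lem: t_q_eq_to}, because here $Spec(\rho_\chi(h))=\{\chi(h)\}$ is a singleton and hence the admissible set of residues collapses to one element. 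Substituting $k_q^\chi=0$ together with this value of $a_q^\chi$ into Theorem \ref{thm: Base_locus_formula} yields the claimed formula.

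Since both quantities are forced by the hypotheses, there is essentially no obstacle: the entire content is carried by Theorem \ref{thm: Base_locus_formula}, and the corollary is just the degenerate case in which the spectrum of $\rho_\chi(h)$ reduces to a single eigenvalue and the genus-zero base curve kills the non-gap contribution. The only point deserving a word of care is the well-definedness of $a_q^\chi$ independently of the chosen point $p\in\lambda^{-1}(q)$; but this is already recorded in Lemma \ref{lem: t_q_eq_to}, since the local monodromies over $q$ are mutually conjugate and $\chi$, being a class function, takes the same value $\chi(h)$ on all of them.
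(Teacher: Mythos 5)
Your proposal is correct and follows exactly the paper's route: the paper states this corollary as an immediate consequence of Theorem \ref{thm: Base_locus_formula} combined with Remark \ref{rem: direct_comp_t_qchi}, which records precisely your two observations ($k_q=g(C/G)+1=1$ on $\mathbb P^1$ forcing $k_q^\chi=0$, and $\rho_\chi(h)=\chi(h)\cdot\Id$ for degree-one $\chi$ collapsing the spectrum to the single value $e^{\frac{2\pi i}{m_q}a_q^\chi}$). Your additional remarks on the uniqueness of $a_q^\chi$ and its independence of the choice of $p\in\lambda^{-1}(q)$ are accurate and already guaranteed by Lemma \ref{lem: t_q_eq_to}, so nothing is missing.
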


\subsection{The canonical system of a product-quotient surface}\label{subsec: can_map_PQ} 
Let us consider a product-quotient surface $S$ given by a pair of curves $C_1$ and $C_2$ and a finite group $G$ acting (faithfully) on both of them. Let $X:=\left(C_1\times C_2\right)/G$ be the the quotient model of $S$. 
\\
According to the previous section, then $G$ induces the canonical representation on $H^{1,0}(C_i)$; let  $\chi_{can}^i$ be their canonical characters  respectively, $i=1,2$.
\begin{theorem}\label{ThmFormulaH^2,0(S)}
	Every $G$-invariant global holomorphic $2$-form of $C_1\times C_2$ extends uniquely to a global holomorphic $2$-form on the minimal resolution of the singularities $\rho\colon S\to X$ of $X$. It holds
	\begin{equation}\label{FormulaH^2,0(S)}
		H^{2,0}(S)=H^{2,0}(C_1\times C_2)^G=\bigoplus_{\chi \in Irr(G)}\left(H^{1,0}(C_1)^\chi\otimes H^{1,0}(C_2)^{\overline{\chi}}\right)^G.
	\end{equation} 
	Furthermore,
	\[
	p_g(S)=\sum_{\chi\in Irr(G)}\langle \chi_{can}^1, \chi\rangle \cdot \langle \chi_{can}^2 , \overline{\chi}\rangle.
	\]
\end{theorem}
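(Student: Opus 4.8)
The plan is to separate the statement into a purely representation-theoretic identity and a geometric extension result, and to treat the two independently.

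First I would establish the algebraic decomposition. By the Künneth formula for Dolbeault cohomology, and since $C_1,C_2$ are curves, $H^{2,0}(C_1\times C_2)=H^{1,0}(C_1)\otimes H^{1,0}(C_2)$ $G$-equivariantly for the diagonal action. Decomposing each factor into isotypic components $H^{1,0}(C_i)=\bigoplus_{\chi}H^{1,0}(C_i)^\chi$ and distributing the tensor product, I would take $G$-invariants: by Schur's lemma the invariants of $H^{1,0}(C_1)^\chi\otimes H^{1,0}(C_2)^\psi$ vanish unless $\psi=\overline\chi$, which yields the claimed direct sum for $H^{2,0}(C_1\times C_2)^G$. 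The $p_g$ formula is then a dimension count: writing $H^{1,0}(C_1)^\chi\cong\rho_\chi^{\oplus m_1}$ and $H^{1,0}(C_2)^{\overline\chi}\cong\rho_{\overline\chi}^{\oplus m_2}$ with $m_1=\langle\chi_{can}^1,\chi\rangle$, $m_2=\langle\chi_{can}^2,\overline\chi\rangle$, I identify $(\rho_\chi^{\oplus m_1}\otimes\rho_{\overline\chi}^{\oplus m_2})^G\cong\Hom_G(\rho_{\overline\chi}^{\oplus m_1},\rho_{\overline\chi}^{\oplus m_2})$, which has dimension $m_1 m_2$ since $\Hom_G(\rho_{\overline\chi},\rho_{\overline\chi})$ is one-dimensional.

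Next I would prove $H^{2,0}(S)=H^{2,0}(C_1\times C_2)^G$. The crucial geometric input is that the action is unmixed and faithful on each factor, so for every $g\neq 1$ the fixed locus $\Fix_{C_1}(g)\times\Fix_{C_2}(g)$ is finite; hence the points with nontrivial stabiliser form a finite set, $X$ has only isolated cyclic quotient singularities, and the quotient map $\pi\colon C_1\times C_2\to X$ is étale in codimension one (there are no pseudo-reflections for a diagonal $\tfrac1n(1,a)$-action). Writing $Y^0$ for the complement of that finite set and $X_{reg}=\pi(Y^0)$, the restriction $\pi\colon Y^0\to X_{reg}$ is an étale Galois cover with group $G$, so $H^0(X_{reg},\Omega^2)=H^0(Y^0,\Omega^2)^G$, and by Hartogs (codimension two) the right-hand side equals $H^{2,0}(C_1\times C_2)^G$.

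The hard part will be the remaining identification $H^0(X_{reg},\Omega^2)=H^{2,0}(S)$ across the exceptional locus of $\rho\colon S\to X$. Here I would invoke that cyclic quotient singularities are rational singularities, whence $\rho_*\omega_S=\omega_X$ (Grauert–Riemenschneider together with the characterisation of rational singularities), giving $H^{2,0}(S)=H^0(S,\omega_S)=H^0(X,\omega_X)$; as $\omega_X$ is reflexive and $X_{reg}$ is the complement of finitely many points, $H^0(X,\omega_X)=H^0(X_{reg},\Omega^2)$. Chaining these isomorphisms yields the desired equality, and uniqueness of the extension is automatic, since two holomorphic $2$-forms on the smooth surface $S$ agreeing on the dense open $S\setminus E$ coincide. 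I would stress that only rationality, and not canonicity, of the singularities is used, which is exactly what makes the argument valid even for non-canonical quotient singularities such as $\tfrac1n(1,1)$.
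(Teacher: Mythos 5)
Your proposal is correct, and its algebraic half (K\"unneth plus Schur's lemma, giving the vanishing of invariants unless $\psi=\overline\chi$ and the dimension count $\langle\chi_{can}^1,\chi\rangle\cdot\langle\chi_{can}^2,\overline\chi\rangle$) coincides with the paper's argument. Where you genuinely diverge is the geometric extension step. The paper identifies $H^{2,0}(S)$ with $H^{2,0}(X^\circ)$ by invoking Freitag's theorem \cite[Satz 1]{FR71}, which asserts that holomorphic forms on the smooth locus of a quotient singularity extend holomorphically across the exceptional locus of a resolution; combined with the pullback isomorphism $H^{2,0}(X^\circ)\cong H^{2,0}(C_1\times C_2)^G$ this closes the proof. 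You instead argue sheaf-theoretically: quotient singularities are rational, and for a normal (hence Cohen--Macaulay) surface, rationality is equivalent, by Kempf's criterion (Grauert--Riemenschneider vanishing plus duality), to $\rho_*\omega_S=\omega_X$, whence $H^0(S,\omega_S)=H^0(X,\omega_X)=H^0(X_{reg},\Omega^2_{X_{reg}})$ by reflexivity of $\omega_X$; your preliminary reduction to $X_{reg}$ via the \'etale Galois cover and Hartogs, including the observation that the unmixed diagonal action has no pseudo-reflections so that $X_{reg}$ is exactly the image of the free locus, matches the paper's use of $\lambda_{12}^*$. Both routes are valid, and your closing remark that only rationality --- not canonicity --- is used is precisely the right point, since the singularities $\tfrac1n(1,a)$ occurring here are log terminal but in general not canonical. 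The trade-off is this: your argument is quotable from standard algebraic references and avoids the analytic input, but it is specific to forms of top degree, whereas Freitag's theorem applies to $H^{p,0}$ for every $p$; this is what lets the paper note, immediately after the theorem, that $H^{i,0}(S)=H^{i,0}(C_1\times C_2)^G$ in all degrees and hence recover Serrano's formula $q(S)=g(C_1/G)+g(C_2/G)$, which your method would need a separate input (e.g.\ an extension theorem for reflexive $1$-forms on quotient singularities) to reproduce.
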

\begin{proof}
	Denote by $X^\circ$ the smooth locus of $X$, i.e. the locus of the image of that points of $C_1\times C_2$ with trivial stabilizer. Each global holomorphic $2$-form of $X^\circ$ extends uniquely to a global holomorphic $2$-form of  $C_1\times C_2$, via the pullback map $\lambda_{12}^*\colon H^{2,0}(X^\circ)\to H^{2,0}(C_1\times C_2)$, resulting a monomorphism onto the invariant subspace $H^{2,0}(C_1\times C_2)^G$. On the other side, the minimal resolution of the singularities $\rho\colon S\to X$ is an isomorphism on $X^\circ$, hence $\left(\rho^{-1}\right)^*\colon H^{2,0}(S)\to H^{2,0}(X^\circ)$ is a monomorphism. Furthermore, each global holomorphic $2$-form on the smooth locus $X^\circ$ of $X$ extends uniquely to a global holomorphic $2$-form on $S$, by Freitag's theorem \cite[Satz 1]{FR71}, so $\left(\rho^{-1}\right)^* $ is an epimorphism too. 
	\\
	Thus $H^{2,0}(S)$ is sent isomorphically via $\lambda_{12}^*\circ (\rho^{-1})^*$ onto the invariant subspace $H^{2,0}(C_1\times C_2)^G\subseteq H^{2,0}(C_1\times C_2)$. 
	Finally, by applying K\"unneth formula and writing $H^{1,0}(C_i)$ as the direct sum of isotypic components, we get 
	\[
	H^{2,0}(C_1\times C_2)^G=\bigoplus_{\chi,\eta \in Irr(G)}\left(H^{1,0}(C_1)^\chi\otimes H^{1,0}(C_2)^{\eta}\right)^G.
	\]
	Formula \eqref{FormulaH^2,0(S)} follows just from Schur lemma. Indeed, the dimension of any piece of the sum is $\langle \chi_{can}^1, \chi\rangle \cdot \langle \chi_{can}^2 , \eta\rangle \cdot \langle \chi\eta, 1\rangle$. However $\langle \chi\eta, 1\rangle=\langle \chi ,\overline{\eta}\rangle$, which is equal to $1$ only for $\eta=\overline{\chi}$, and $0$ otherwise. 
\end{proof}
\begin{remark}
	Using an analogous proof such as that of Theorem \ref{ThmFormulaH^2,0(S)} one can say in general that
	\[
	H^{i,0}(S)=H^{i,0}(C_1\times C_2)^G
	\]
	by Freitag's theorem \cite[Satz 1]{FR71}.
	Hence, another immediate consequence firstly observed by Serrano in \cite[Prop. 2.2]{Ser96} is a formula for the irregularity of $S$:
	\[
	q(S)=g(C_1/G)+g(C_2/G). 
	\]
	In particular, $S$ is regular if and only if $C_i/G\cong \mathbb{P}^1$. 
\end{remark}
Let us remind the following classical lemma of representation theory:
\begin{lemma}\label{GenSpazioInvariante}
	Let us consider an irreducible representation $\phi_\chi$ afforded by a character $\chi$, of degree $n:=\chi(1)$. Consider a basis $v_1,\dots, v_n$ of $V$ and its dual basis $e_1,\dots, e_n$ of $V^*$.  Then $(V\otimes V^*)^G$ is one-dimensional and it is generated by $v_1\otimes e_1+\dots +v_n\otimes e_n$. 
\end{lemma}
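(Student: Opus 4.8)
The statement to prove is Lemma~\ref{GenSpazioInvariante}: for an irreducible representation $\phi_\chi$ on a vector space $V$ of dimension $n=\chi(1)$, the invariant subspace $(V\otimes V^*)^G$ is one-dimensional and generated by $\sum_{i=1}^n v_i\otimes e_i$, where $\{v_i\}$ is a basis of $V$ and $\{e_i\}$ its dual basis of $V^*$. Let me sketch how I would prove this.

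My plan is to use the standard identification $V\otimes V^*\cong \Hom(V,V)=\End(V)$, under which a pure tensor $v\otimes e$ corresponds to the rank-one endomorphism $w\mapsto e(w)\,v$. The key point is that this isomorphism is $G$-equivariant, where $G$ acts on $\End(V)$ by conjugation, $g\cdot T = \phi_\chi(g)\circ T\circ \phi_\chi(g)^{-1}$. Under this identification, the $G$-invariant elements $(V\otimes V^*)^G$ correspond exactly to the endomorphisms $T$ commuting with every $\phi_\chi(g)$, that is, to $\End_G(V)$. Since $\phi_\chi$ is irreducible, Schur's Lemma tells us that $\End_G(V)=\CC\cdot\Id_V$ is one-dimensional. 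This immediately gives $\dim (V\otimes V^*)^G=1$.

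To identify the generator explicitly, I would trace $\Id_V$ back through the isomorphism $\End(V)\cong V\otimes V^*$. The identity endomorphism sends $w\mapsto \sum_{i=1}^n e_i(w)\,v_i$, which is precisely the image of $\sum_{i=1}^n v_i\otimes e_i$ under the pure-tensor correspondence. Since $\Id_V$ spans $\End_G(V)$, its preimage $\sum_{i=1}^n v_i\otimes e_i$ spans $(V\otimes V^*)^G$, as claimed. I would also note that this element is independent of the choice of basis, reflecting the canonical nature of the identity endomorphism.

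\textbf{Main obstacle.} There is no serious obstacle here; the lemma is a direct packaging of Schur's Lemma together with the canonical isomorphism $V\otimes V^*\cong\End(V)$. The only point requiring minor care is verifying that the action of $G$ on $V\otimes V^*$ (namely $g\cdot(v\otimes e)=(\phi_\chi(g)v)\otimes(\phi_\chi(g)^{-\top}e)$, with the contragredient action on $V^*$) matches conjugation on $\End(V)$ under the chosen isomorphism, and that the element $\sum_i v_i\otimes e_i$ is genuinely fixed. This last verification is a short direct computation: applying $g$ permutes the sum back to itself because $\sum_i (\phi_\chi(g)v_i)\otimes(\phi_\chi(g)^{-\top}e_i)$ again represents the identity endomorphism, which is conjugation-invariant. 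With these routine checks in place, the lemma follows at once.
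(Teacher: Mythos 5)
Your proof is correct. Note, however, that the paper itself offers no proof of this lemma at all: it is introduced with the words ``Let us remind the following classical lemma of representation theory'' and then used directly, so there is no argument in the paper to compare against line by line. Your route --- the canonical $G$-equivariant identification $V\otimes V^*\cong \End(V)$ (with $G$ acting by conjugation on the right-hand side), Schur's Lemma giving $\End_G(V)=\CC\cdot \Id_V$, and the observation that $\Id_V$ corresponds to $\sum_i v_i\otimes e_i$ --- is exactly the standard proof of this classical fact, and it is complete; the only hypothesis you use implicitly is that the ground field is $\CC$ (algebraically closed), which is the setting of the paper. A mild variant, closer in spirit to how the paper invokes Schur's Lemma in the proof of Theorem \ref{ThmFormulaH^2,0(S)}, would be to get the dimension count purely from character orthogonality, $\dim (V\otimes V^*)^G=\langle \chi\overline{\chi},1\rangle=\langle\chi,\chi\rangle=1$, and then check directly that $\sum_i v_i\otimes e_i$ is invariant and nonzero; this buys nothing essential over your argument, which has the advantage of explaining \emph{why} the generator is basis-independent (it is the identity endomorphism).
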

We use the previous lemma to describe a basis of $(H^{1,0}(C_1)^\chi\otimes H^{1,0}(C_2)^{\overline{\chi}})^G$.
\begin{remark}\label{basis_inv_subs}
	Let us consider an irreducible representation $\phi_\chi\colon G\to GL(V)$ of character $\chi$. Let $n:=\chi(1)$ be the degree of $\phi_\chi$. Then $H^{1,0}(C_1)^\chi\otimes H^{1,0}(C_2)^{\overline{\chi}}$ is the direct sum of certain  number of copies of $V\otimes V^*$ (the exact number of copies is $\langle \chi_{can}^1, \chi\rangle \cdot \langle \chi_{can}^2 , \overline{\chi}\rangle$). Consequently its invariant subspace $(H^{1,0}(C_1)^\chi\otimes H^{1,0}(C_2)^{\overline{\chi}})^G$ is a direct sum of the same number of copies of the invariant subspace $(V\otimes V^*)^G$. 
	Let us fix a basis $\lbrace \omega_1,\dots, \omega_n \rbrace$ of $V$ and the (dual) basis $\lbrace \eta_1,\dots, \eta_n\rbrace$ on $V^*$.
	Hence, denote by $\lbrace \omega_1^k,\dots, \omega_n^k\rbrace$ the corresponding basis of the $k$-th copy of $V$ on $H^{1,0}(C_1)^\chi$, $k=1,\dots, \langle \chi_{can}^1,\chi\rangle$ [resp. by $\lbrace \eta_1^l,\dots, \eta_n^l\rbrace$ the corresponding basis of the $l$-th copy of $V^*$ on $H^{1,0}(C_2)^{\overline{\chi}}$, $l=1,\dots, \langle \chi_{can}^2, \overline{\chi}\rangle$].
	Lemma \ref{GenSpazioInvariante} applies for any copy of $(V\otimes V^*)^G$, so that
	\begin{equation}
		(H^{1,0}(C_1)^\chi\otimes H^{1,0}(C_2)^{\overline{\chi}})^G=\bigoplus_{k,l} \ \langle \omega_1^k\otimes \eta_1^l+\dots+ \omega_n^k\otimes \eta_n^l\rangle.
	\end{equation}
\end{remark}
\begin{definition}
	We denote by $\vert K_{C_1\times C_2}\vert^G$ the linear subsystem of the canonical system of $C_1\times C_2$  given by the subspace of invariant $2$-forms of $C_1\times C_2$.
\end{definition}
We give a theoretical description of the canonical map $\Phi_{K_S}$ of $S$. From Theorem \ref{ThmFormulaH^2,0(S)}, the (rational) map $\Phi_{K_S}\circ \lambda_{12}$ 
is induced by the linear subsystem $\vert K_{C_1\times C_2}\vert^G$. 
The situation is the following:
\[\begin{tikzcd}
	& X \\
	{C_1\times C_2} && S &&& {\mathbb P^{p_g-1}} \\
	\\
	{\mathbb P^{g_1-1}\times \mathbb P^{g_2-1}} && {\mathbb P^{g_1g_2-1}}
	\arrow[from=2-1, to=4-1]
	\arrow["Segre"{description}, from=4-1, to=4-3]
	\arrow["{\lambda_{12}}", dashed, from=2-1, to=2-3]
	\arrow["{\lambda_{12}}", from=2-1, to=1-2]
	\arrow["\rho"', from=2-3, to=1-2]
	\arrow["{\Phi_{K_{C_1\times C_2}}}"{description}, from=2-1, to=4-3]
	\arrow["{\Phi_{K_S}}", dashed, from=2-3, to=2-6]
	\arrow["proj"', dashed, from=4-3, to=2-6]
	\arrow["{\Phi_{\vert K_{C_1\times C_2}\vert^G}}"{description}, dashed, from=2-1, to=2-6, bend right=18pt].
\end{tikzcd}\]
Let us fix a basis of $H^{1,0}(C_1)$ and $H^{1,0}(C_2)$. Then $\Phi_{K_S}\circ \lambda_{12}$ is the composition of the product of the canonical maps of $C_1$ and $C_2$ with the Segre embedding in $\mathbb{P}^{g_1g_2-1}$, together with the projection map $proj$. This latter map sends a basis of $2$-forms of $C_1\times C_2$ to a basis of invariant $2$-forms defining $\Phi_{K_S}$.
\\
We can use Remark \ref{basis_inv_subs} to give an explicit description of $proj$, which is defined in coordinates as follows:  
\\
Let us fix coordinates $^\chi x_{ij}^{kl}$ on $\mathbb{P}^{g_1g_2-1}$, with $1\leq i,j \leq \chi(1)$, and $1\leq k\leq  \langle \chi_{can}^1, \chi\rangle $, $1\leq l\leq  \langle \chi_{can}^2, \overline{\chi}\rangle$. Then 
\begin{multline*}
	proj\left(\left(^\chi x_{ij}^{kl} : \chi, i,j,k,l\right)\right)=\\ \left(^\chi x_{11}^{kl} +\dots + \   ^\chi x_{nn}^{kl} : \quad \chi \in Irr(G), n=\chi(1), k,l\right).
\end{multline*}
\subsection{Base locus of the invariant subsystem $\vert K_{C_1\times C_2}\vert^G$}\label{subsec: base_locus_PQ}
Given an irreducible character $\chi\in Irr(G)$, we have the following series of inclusions
\[
\left(H^{1,0}(C_1)^\chi\otimes H^{1,0}(C_2)^{\overline{\chi}}\right)^G\subseteq H^{1,0}(C_1)^\chi\otimes H^{1,0}(C_2)^{\overline{\chi}}\subseteq H^{2,0}(C_1\times C_2).
\]
Let us define the associated subsystems of $\vert K_{C_1\times C_2}\vert$ given by these subspaces.
\begin{definition}
	We denote by $\vert K_{C_1}\vert^\chi\otimes \vert K_{C_2}\vert^{\overline{\chi}}$ and by $\left(\vert K_{C_1}\vert^\chi\otimes \vert K_{C_2}\vert^{\overline{\chi}}\right)^G$  the associated subsystems of 
	the canonical linear system of $C_1\times C_2$  given by $H^{1,0}(C_1)^\chi\otimes H^{1,0}(C_2)^{\overline{\chi}}$  and $\left(H^{1,0}(C_1)^\chi\otimes H^{1,0}(C_2)^{\overline{\chi}}\right)^G$ respectively. 
\end{definition}
Theorem \ref{ThmFormulaH^2,0(S)} permits us to describe the base locus of $\vert K_{C_1\times C_2}\vert^G$ in terms of the base locus of its pieces $\left(\vert K_{C_1}\vert^\chi\otimes \vert K_{C_2}\vert^{\overline{\chi}}\right)^G$, $\chi\in Irr(G)$. Precisely, we have 
\begin{equation}\label{eq: base_locus_KC1xC2G}
	Bs(\vert K_{C_1\times C_2}\vert^G)=\bigcap_{\langle \chi_{can}^1,\chi\rangle \neq 0, \ \langle \chi_{can}^2, \overline{\chi}\rangle \neq 0}Bs(\left(\vert K_{C_1}\vert^\chi\otimes \vert K_{C_2}\vert^{\overline{\chi}}\right)^G).
\end{equation}
{\bf Notation:} Let us denote by 
\[
B_q^{vert}:=\lbrace q \rbrace \times  C_2/G, \qquad \makebox{and} \qquad B_l^{hor}:= C_1/G\times \lbrace l\rbrace, 
\]
where $q\in C_1/G$ and  $l\in C_2/G$. Instead, $R_q^{vert}$ and $R_l^{hor}$ denote the reduced inverse images on $C_1\times C_2$ of $B_q^{vert}$ and $B_l^{hor}$:
\[
R_q^{vert}:=\frac{1}{m_q}\left(\lambda\circ \lambda_{12}\right)^*\left(\lbrace q \rbrace \times C_2/G\right), \qquad R_l^{hor}:=\frac{1}{m_l}(\lambda\circ \lambda_{12})^*\left(C_1/G\times \lbrace l\rbrace\right).
\]
\begin{remark}
	With this notation, then the branch locus of $\lambda\circ\lambda_{12}\colon C_1	\times C_2\to C_1/G\times C_2/G$ is the grid 
	\[
	B_q^{vert}:=\lbrace q \rbrace \times C_2/G, \qquad \makebox{and} \qquad B_l^{hor}:=C_1/G\times \lbrace l\rbrace
	\]
	with $q\in Crit(\lambda_1)$ and $l\in Crit(\lambda_2)$. 
\end{remark}
\textit{Base Locus formula} theorem \ref{thm: Base_locus_formula} provides a formula for the base locus of  $\vert K_{C_1}\vert^\chi\otimes \vert K_{C_2}\vert^{\overline{\chi}}$.
\begin{theorem}\label{thm: base_locus_H10C1chi_otimes_H10C2_overline_chi}
	The (schematic) base locus of the linear  subsystem $\vert K_{C_1}\vert^\chi$ $\otimes \vert K_{C_2}\vert^{\overline{\chi}}$ of $\vert K_{C_1\times C_2}\vert$ is pure in codimension $1$ and is equal to 
	\begin{equation}\label{eq:Corollary_Base_Locus}
		Bs(\vert K_{C_1}\vert^\chi\otimes \vert K_{C_2}\vert^{\overline{\chi}} )=\sum_{q\in Crit(\lambda_1)} t_q^\chi R_q^{vert}+ \sum_{l\in Crit(\lambda_2)}t_l^{\overline{\chi}}R_l^{hor}
	\end{equation}
	where
	$t_{q}^\chi$ and  $t_{l}^{\overline{\chi}}$ are the non-negative integers of Lemma \ref{lem: t_q_eq_to}.
\end{theorem}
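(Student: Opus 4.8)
The plan is to reduce the statement to the single--curve \emph{Base locus formula} (Theorem~\ref{thm: Base_locus_formula}) by means of a general principle about tensor products of linear systems. Write $V_1:=H^{1,0}(C_1)^\chi$ and $V_2:=H^{1,0}(C_2)^{\overline{\chi}}$; both are nonzero because $\chi$ is among the characters appearing in \eqref{eq: base_locus_KC1xC2G}. By the K\"unneth formula $H^{2,0}(C_1\times C_2)=H^{1,0}(C_1)\otimes H^{1,0}(C_2)$ and $K_{C_1\times C_2}=\mathrm{pr}_1^*K_{C_1}\otimes \mathrm{pr}_2^*K_{C_2}$, so $\vert K_{C_1}\vert^\chi\otimes\vert K_{C_2}\vert^{\overline{\chi}}$ is the subsystem spanned by the decomposable forms $\mathrm{pr}_1^*\omega\wedge \mathrm{pr}_2^*\eta$ with $\omega\in V_1$ and $\eta\in V_2$. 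The first step is to prove the divisor identity
\begin{equation}\label{eq:plan_product}
	Bs\bigl(\vert K_{C_1}\vert^\chi\otimes\vert K_{C_2}\vert^{\overline{\chi}}\bigr)=\mathrm{pr}_1^*\,Bs(\vert K_{C_1}\vert^\chi)+\mathrm{pr}_2^*\,Bs(\vert K_{C_2}\vert^{\overline{\chi}})
\end{equation}
on $C_1\times C_2$.

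For \eqref{eq:plan_product} I would first treat the set-theoretic content: a point $(p_1,p_2)$ is a base point if and only if $p_1\in Bs(\vert K_{C_1}\vert^\chi)$ or $p_2\in Bs(\vert K_{C_2}\vert^{\overline{\chi}})$. Indeed, if neither holds one finds $\omega\in V_1$ and $\eta\in V_2$ not vanishing at $p_1$ and $p_2$ respectively, so the decomposable form $\mathrm{pr}_1^*\omega\wedge \mathrm{pr}_2^*\eta$ does not vanish at $(p_1,p_2)$; the converse is immediate. For the scheme structure the cleanest route is to identify the base ideal. Trivializing $K_{C_i}$ locally and writing $\mathfrak b_1,\mathfrak b_2$ for the base ideals of $\vert K_{C_1}\vert^\chi$ on $C_1$ and of $\vert K_{C_2}\vert^{\overline{\chi}}$ on $C_2$, the base ideal of the tensor product equals the product of the pullbacks $\mathrm{pr}_1^*\mathfrak b_1\cdot \mathrm{pr}_2^*\mathfrak b_2$, because the ideal generated by the products $f_i(z_1)g_j(z_2)$ is the product of the ideals generated by the $f_i$ and by the $g_j$. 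On the smooth curve $C_i$ every base ideal is invertible, hence $\mathfrak b_i$ is locally $(z_i^{\,t_i})$ for the appropriate multiplicity $t_i$, and the product is locally the principal ideal $(z_1^{\,t_1}z_2^{\,t_2})$. In particular the base scheme is Cartier, pure in codimension $1$, with no embedded points even at the ``corner'' points $(p_1,p_2)$ lying over base points of both factors, and \eqref{eq:plan_product} follows with the stated multiplicities.

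To conclude I would substitute the two single--curve base loci furnished by Theorem~\ref{thm: Base_locus_formula}, namely $Bs(\vert K_{C_1}\vert^\chi)=\sum_{q\in Crit(\lambda_1)}t_q^\chi\,\lambda_1^{-1}(q)$ and $Bs(\vert K_{C_2}\vert^{\overline{\chi}})=\sum_{l\in Crit(\lambda_2)}t_l^{\overline{\chi}}\,\lambda_2^{-1}(l)$, with reduced preimages. It then remains to match notation: since over a branch point $q$ all ramification indices of $\lambda_1$ equal $m_q$, and $\lambda\circ\lambda_{12}=\lambda_1\times\lambda_2$, the definition gives $R_q^{vert}=\frac{1}{m_q}(\lambda\circ\lambda_{12})^*(\{q\}\times C_2/G)=\lambda_1^{-1}(q)\times C_2=\mathrm{pr}_1^*\lambda_1^{-1}(q)$, and likewise $R_l^{hor}=\mathrm{pr}_2^*\lambda_2^{-1}(l)$. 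Plugging these into \eqref{eq:plan_product} yields exactly \eqref{eq:Corollary_Base_Locus}.

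I expect the main obstacle to be the scheme--theoretic part of \eqref{eq:plan_product}, that is, verifying that no embedded points appear along the intersections $\{p_1\}\times\{p_2\}$ of a vertical and a horizontal component of the base locus; the product-ideal description above is what makes this transparent, reducing purity in codimension $1$ to the invertibility of base ideals on smooth curves. The remaining steps are the routine set-theoretic check and the bookkeeping of ramification indices.
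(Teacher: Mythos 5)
Your proposal is correct and follows the same route the paper takes: the paper states this theorem as a direct consequence of the single-curve Base Locus Formula (Theorem \ref{thm: Base_locus_formula}), offering no further argument. Your product-of-base-ideals computation — noting that decomposable forms $\mathrm{pr}_1^*\omega\wedge\mathrm{pr}_2^*\eta$ span $V_1\otimes V_2$, that base ideals on smooth curves are invertible, and hence that the base ideal of the tensor system is the locally principal ideal $(z_1^{t_1}z_2^{t_2})$ — supplies exactly the scheme-theoretic details (purity in codimension $1$, no embedded points at corner points) that the paper leaves implicit, and the bookkeeping identifying $\mathrm{pr}_1^*\lambda_1^{-1}(q)$ with $R_q^{vert}$ is the intended one.
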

\begin{corollary}\label{cor: base_locus_Kc1c2_chi_G}
	Let $\chi$ be a character of degree $1$. Then 
	\[
	(H^{1,0}(C_1)^\chi\otimes H^{1,0}(C_2)^{\overline{\chi}})^G=H^{1,0}(C_1)^\chi\otimes H^{1,0}(C_2)^{\overline{\chi}}
	\]
	and the base locus of its associated linear subsystem $\left(\vert K_{C_1}\vert^\chi\otimes \vert K_{C_2}\vert^{\overline{\chi}}\right)^G=\vert K_{C_1}\vert^\chi\otimes \vert K_{C_2}\vert^{\overline{\chi}}$ is given by the formula \eqref{eq:Corollary_Base_Locus} of Theorem  \ref{thm: base_locus_H10C1chi_otimes_H10C2_overline_chi}. 
	
	Assume furthermore that $C_i/G\cong \mathbb P^1$, for $i=1,2$. Then $t_q^\chi$ and $t_l^{\overline{\chi}}$ of \eqref{eq:Corollary_Base_Locus} are the unique non-negative integers with 
	$0\leq t_q^\chi\leq m_q-1$ and $0\leq t_l^{\overline{\chi}}\leq m_l-1$ satisfying 
	\[
	\chi(h)=e^{\frac{2\pi i }{m_q}\left(m_q-t_q^\chi-1\right)} \qquad \makebox{and} \qquad \chi(g)=e^{\frac{2\pi i }{m_l}\left(t_l^{\overline{\chi}}+1\right)},
	\]
	where $h$ is the local monodromy of a point over $q$, and $g$ is the local monodromy of a point over $l$. 
\end{corollary}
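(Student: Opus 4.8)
The plan is to derive Corollary \ref{cor: base_locus_Kc1c2_chi_G} as a direct specialization of the Base Locus formula (Theorem \ref{thm: base_locus_H10C1chi_otimes_H10C2_overline_chi}), invoking Lemma \ref{lem: t_q_eq_to} together with the degree-one simplification already recorded in Remark \ref{rem: direct_comp_t_qchi}. The first step is to handle the claimed equality of subspaces. When $\chi$ has degree $1$, the irreducible representation $V$ affording $\chi$ is one-dimensional, so $V\otimes V^*\cong \mathbb C$ is already $G$-invariant; equivalently, by Lemma \ref{GenSpazioInvariante} the invariant subspace $(V\otimes V^*)^G$ equals all of $V\otimes V^*$. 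Applying this copy-by-copy as in Remark \ref{basis_inv_subs} gives $\left(H^{1,0}(C_1)^\chi\otimes H^{1,0}(C_2)^{\overline\chi}\right)^G = H^{1,0}(C_1)^\chi\otimes H^{1,0}(C_2)^{\overline\chi}$, hence the two associated linear subsystems coincide and their base loci are identical. This immediately reduces the computation of $Bs\bigl(\left(\vert K_{C_1}\vert^\chi\otimes \vert K_{C_2}\vert^{\overline\chi}\right)^G\bigr)$ to formula \eqref{eq:Corollary_Base_Locus}.

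Next I would specialize the integers $t_q^\chi$ and $t_l^{\overline\chi}$ under the two extra hypotheses $C_i/G\cong\mathbb P^1$ and $\deg\chi=1$. By Remark \ref{rem: direct_comp_t_qchi}, when $C_1/G\cong \mathbb P^1$ the minimal non-gap satisfies $k_q=g(\mathbb P^1)+1=1$, forcing $k_q^\chi=0$ in Lemma \ref{lem: t_q_eq_to}; thus $t_q^\chi=m_q-a_q^\chi-1$, and symmetrically $t_l^{\overline\chi}=m_l-a_l^{\overline\chi}-1$ on the second factor. Again by Remark \ref{rem: direct_comp_t_qchi}, a degree-one character makes $\rho_\chi(h)=\chi(h)$ a scalar, so $a_q^\chi\in\{0,\dots,m_q-1\}$ is the unique integer with $\chi(h)=e^{\frac{2\pi i}{m_q}a_q^\chi}$. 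Substituting $a_q^\chi=m_q-t_q^\chi-1$ yields precisely $\chi(h)=e^{\frac{2\pi i}{m_q}(m_q-t_q^\chi-1)}$, which is the first asserted relation.

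The only genuinely new point is the second relation, involving the complex-conjugate character $\overline\chi$ on the factor $C_2$. Here I would be careful about the bookkeeping: the isotypic component on $C_2$ is taken with respect to $\overline\chi$, so the uniqueness statement from Remark \ref{rem: direct_comp_t_qchi} determines $a_l^{\overline\chi}$ via $\overline\chi(g)=e^{\frac{2\pi i}{m_l}a_l^{\overline\chi}}$, equivalently $\chi(g)=\overline{\overline\chi(g)}=e^{-\frac{2\pi i}{m_l}a_l^{\overline\chi}}=e^{\frac{2\pi i}{m_l}(m_l-a_l^{\overline\chi})}$. Using $t_l^{\overline\chi}=m_l-a_l^{\overline\chi}-1$, i.e. $m_l-a_l^{\overline\chi}=t_l^{\overline\chi}+1$, this becomes $\chi(g)=e^{\frac{2\pi i}{m_l}(t_l^{\overline\chi}+1)}$, exactly the claimed formula. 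I expect this conjugation step to be the main (if minor) obstacle, since it is the one place where the asymmetry between $\chi$ and $\overline\chi$ in the two tensor factors must be tracked correctly; the rest is a clean substitution into the Base Locus formula. Finally, uniqueness of $t_q^\chi$ and $t_l^{\overline\chi}$ in the stated ranges follows from the uniqueness of $a_q^\chi$ and $a_l^{\overline\chi}$ in $\{0,\dots,m_q-1\}$ and $\{0,\dots,m_l-1\}$ respectively, completing the proof.
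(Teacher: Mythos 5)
Your proof is correct and follows essentially the same route as the paper: the equality of subspaces is the immediate degree-one observation (the paper verifies invariance of simple tensors directly via $g\cdot(v\otimes w)=\chi(g)\overline{\chi}(g)\,v\otimes w=v\otimes w$, while you obtain it from Lemma \ref{GenSpazioInvariante} applied copy-by-copy as in Remark \ref{basis_inv_subs} --- the same fact in different clothing), and the formulas for $t_q^\chi$ and $t_l^{\overline{\chi}}$ are exactly the specialization recorded in Remark \ref{rem: direct_comp_t_qchi}. Your explicit conjugation bookkeeping for $\overline{\chi}$ on the second factor, yielding $\chi(g)=e^{\frac{2\pi i}{m_l}(t_l^{\overline{\chi}}+1)}$, is precisely what the paper compresses into ``the rest of the thesis follows from Remark \ref{rem: direct_comp_t_qchi}.''
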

\begin{proof}
	The first sentence is straightforward, since every $v\otimes w\in H^{1,0}(C_1)^\chi\otimes  H^{1,0}(C_2)^{\overline{\chi}}$ is $G$-invariant
	\[
	g\cdot \left( v\otimes w\right)=(\chi(g)v)\otimes (\overline{\chi}(g)w)=\vert \chi(g)\vert v\otimes w=v\otimes w. 
	\]
	The rest of the thesis follows from Remark \ref{rem: direct_comp_t_qchi}.
\end{proof}
\begin{lemma}\label{lem: fixed_part_Kc1c2G}
	Suppose $S$ satisfies \hyperlink{Property $\left(\#\right)$}{Property $\left(\#\right)$}.
	Then the fixed part of  the linear system $\vert K_{C_1\times C_2}\vert^G$ is
	\begin{equation}\label{eq: fixed_part_Kc1c2G}
		\begin{split}
			Fix\left(\vert K_{C_1\times C_2}\vert^G\right)=\sum_{q\in Crit(\lambda_1)} & \left(\min_{\chi \colon \langle \chi_{can}^1,\chi\rangle \neq 0,  \langle \chi_{can}^2,\overline{\chi}\rangle \neq 0} t_q^\chi \right)R_q^{vert} +\\ & \sum_{l\in Crit(\lambda_2)}\left(\min_{\chi \colon \langle \chi_{can}^1,\chi\rangle \neq 0, \langle \chi_{can}^2,\overline{\chi}\rangle \neq 0} t_l^{\overline{\chi}} \right)R_l^{hor}.
		\end{split}
	\end{equation}
\end{lemma}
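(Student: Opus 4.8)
The plan is to compute the fixed part one prime divisor at a time, feeding in the orthogonal decomposition of $H^{2,0}(C_1\times C_2)^G$ from Theorem \ref{ThmFormulaH^2,0(S)} together with the explicit base loci of its summands. First, since $S$ satisfies Property $(\#)$, every irreducible character $\chi$ with $\langle \chi_{can}^1,\chi\rangle \neq 0$ and $\langle \chi_{can}^2,\overline{\chi}\rangle \neq 0$ has degree one. Hence Corollary \ref{cor: base_locus_Kc1c2_chi_G} applies to each such $\chi$: one gets $(H^{1,0}(C_1)^\chi\otimes H^{1,0}(C_2)^{\overline{\chi}})^G = H^{1,0}(C_1)^\chi\otimes H^{1,0}(C_2)^{\overline{\chi}}$, and the base locus of the associated subsystem is the codimension-one divisor $\sum_q t_q^\chi R_q^{vert} + \sum_l t_l^{\overline{\chi}} R_l^{hor}$, supported on the vertical and horizontal fibre-classes.

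Next I would recall that the fixed part of a linear system is the divisor $\sum_D \bigl(\min_{s\neq 0}\ord_D(s)\bigr)\,D$, the sum running over prime divisors $D$, where $\ord_D(s)$ is the vanishing order of a section $s$ along $D$. By Theorem \ref{ThmFormulaH^2,0(S)} every $s\in H^{2,0}(C_1\times C_2)^G$ decomposes uniquely as $s=\sum_\chi \omega_\chi$, with $\omega_\chi$ in the $\chi$-summand and the sum ranging over the finitely many $\chi$ satisfying the two non-vanishing conditions. Fix a vertical fibre $D=R_q^{vert}$. On one hand, each $\omega_\chi$ vanishes along $D$ to order at least $t_q^\chi\geq \min_\chi t_q^\chi$ by the previous paragraph, and since the order of a sum is at least the minimum of the orders of the summands, every $s$ vanishes along $D$ to order at least $\min_\chi t_q^\chi$. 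On the other hand, choosing $\chi_0$ attaining this minimum and taking $s=\omega_{\chi_0}$ a general form of the $\chi_0$-summand — which is a genuine global invariant $2$-form, the remaining components being zero — produces a section vanishing along $D$ to order exactly $t_q^{\chi_0}$. Therefore the coefficient of $R_q^{vert}$ in the fixed part equals $\min_\chi t_q^\chi$, and symmetrically the coefficient of $R_l^{hor}$ equals $\min_\chi t_l^{\overline{\chi}}$.

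Finally I would check that no other prime divisor contributes. For a prime $D$ not of the form $R_q^{vert}$ or $R_l^{hor}$, the base locus of any single summand $\vert K_{C_1}\vert^\chi\otimes \vert K_{C_2}\vert^{\overline{\chi}}$ is pure in codimension one and supported on fibres by Theorem \ref{thm: base_locus_H10C1chi_otimes_H10C2_overline_chi}, so a general form in that summand does not vanish along $D$; hence $\min_s \ord_D(s)=0$. Assembling the three computations yields exactly \eqref{eq: fixed_part_Kc1c2G}.

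The only delicate point is the interchange of the ``minimum over sections'' with the ``minimum over characters''. This rests squarely on the directness of the sum in Theorem \ref{ThmFormulaH^2,0(S)}: because the isotypic summands are linearly independent, a single-component form is always available as an honest section, which supplies the sharp upper bound $t_q^{\chi_0}$, while the matching lower bound is automatic because vanishing orders can only increase, never drop, under addition. I expect this to be the main (though mild) obstacle; everything else is a direct consequence of the results already in place.
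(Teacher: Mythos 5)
Your proof is correct and follows essentially the same route as the paper: decompose $H^{2,0}(C_1\times C_2)^G$ into isotypic pieces via Theorem \ref{ThmFormulaH^2,0(S)}, invoke Property $\left(\#\right)$ so that Corollary \ref{cor: base_locus_Kc1c2_chi_G} gives the fixed part of each piece, and take the common divisor (componentwise minimum) over the contributing characters. The only difference is presentational: the paper asserts in one line that the fixed part of the whole system is the common divisor of the fixed parts of the pieces, whereas you justify this step explicitly via the two-sided bound (subadditivity of vanishing order under sums for the lower bound, single-component sections for the upper bound), which is a welcome unpacking of the same argument rather than a different approach.
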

\begin{proof}
	The fixed part of $\vert K_{C_1\times C_2}\vert^G$ is the common divisor of the fixed parts of those pieces $\left(\vert K_{C_1}\vert^\chi\otimes \vert K_{C_2}\vert^{\overline{\chi}}\right)^G$ that are non-empty, for $\chi$ irreducible character. 
	By \hyperlink{Property $\left(\#\right)$}{Property $\left(\#\right)$} , then $\chi$ is of degree $1$, hence Corollary \ref{cor: base_locus_Kc1c2_chi_G} applies and the fixed part of $\left(\vert K_{C_1}\vert^\chi\otimes \vert K_{C_2}\vert^{\overline{\chi}}\right)^G$ 
	is amount to 
	\[
	\sum_{q\in Crit(\lambda_1)} t_q^\chi R_q^{vert}+ \sum_{l\in Crit(\lambda_2)}t_l^{\overline{\chi}}R_l^{hor}.
	\]
	The common divisor of these fixed parts is the right member of \eqref{eq: fixed_part_Kc1c2G}.
\end{proof}
Let $M$ be the mobile part of $\vert K_{C_1\times C_2}\vert^G$. By definition of $M$, then 
\[
M\equiv K_{C_1\times C_2}-Fix(\vert K_{C_1\times C_2}\vert^G).
\]
Suppose $S$ satisfies \hyperlink{Property $\left(\#\right)$}{Property $\left(\#\right)$}. Thus $Fix(\vert K_{C_1\times C_2}\vert^G)$ is a union of fibres from Lemma \ref{eq: fixed_part_Kc1c2G}. To compute $M^2$ is then sufficient to know the intersection product of 
\[
K_{C_1\times C_2}\cdot R_q^{vert}, \qquad K_{C_1\times C_2}\cdot R_l^{hor}, \qquad \left(R_q^{vert}\right)^2, \qquad \left(R_l^{hor}\right)^2, \quad  R_q^{vert}\cdot R_l^{hor}. 
\]
We compute them. 
\\
$R_q^{vert}$ can be written as sum of $\vert G\vert/m_q$ components $\lbrace g\cdot p\rbrace \times C_2$, with $p$ point over $q$, and $g\in G$. $\lbrace g\cdot p\rbrace \times C_2$ has self-intersection zero (since two points are always homologous on a connected variety, and then the fibres of $C_1\times C_2\to C_1$ are always numerically equivalent). Thus we can use \textit{genus formula} to get 
\[
K_{C_1\times C_2}\cdot\left( \lbrace g\cdot p\rbrace \times C_2\right)=2g(C_2)-2-\left( \lbrace g\cdot p\rbrace \times C_2\right)^2=2g(C_2)-2.
\]
The same reasoning works for an horizontal divisor $R_l^{hor}$. Thus, we have got 
\[
K_{C_1\times C_2}\cdot R_q^{vert}=\frac{\vert G\vert}{m_q}\left(2g(C_2)-2\right), \qquad K_{C_1\times C_2}\cdot R_l^{hor}=\frac{\vert G\vert}{m_l}\left(2g(C_1)-2\right).
\]
Analogously, 
\[
\left(R_q^{vert}\right)^2= \left(R_l^{hor}\right)^2 =0, \qquad \makebox{and} \qquad 
R_q^{vert}\cdot R_l^{hor}=\frac{\vert G\vert^2}{m_qm_l}.
\]
\subsection{A formula for the degree of the canonical map}\label{subsec: deg_can_map}
In the previous subsection we have seen that the (a priori rational) map $\Phi_{K_S}\circ \lambda_{12}$ is induced by the linear subsystem $\vert K_{C_1\times C_2}\vert^G $, which is generated by $p_g$ invariant $2$-forms defining $\Phi_{K_S}$:
\[\begin{tikzcd}
	{C_1\times C_2} & S & {\mathbb P^{p_g-1}}
	\arrow["{\lambda_{12}}", dashed, from=1-1, to=1-2]
	\arrow["{\Phi_S}", dashed, from=1-2, to=1-3]
	\arrow["{\Phi_{\vert K_{C_1\times C_2}\vert^G}}"', dashed, from=1-1, to=1-3, bend right=35pt]
\end{tikzcd}\]
We 
\textit{resolve  the indeterminacy} of  $\Phi_{\vert K_{C_1\times C_2}\vert^G}=\Phi_{K_S}\circ \lambda_{12}$ 
by a sequence of  blowups: 
\[
\xymatrix{
	\widehat{C_1\times C_2} \ar[r] \ar[dr]_{\Phi_{\widehat{M}}} & C_1\times C_2\ar@{-->}[d]^{\Phi_{\vert K_{C_1\times C_2}\vert ^G}} \\ 
& \mathbb P^{p_g-1}.
}
\]
Here the morphism $\Phi_{\widehat{M}}$ is induced by the base-point free linear system $ \widehat{M}$ obtained as follow: let $M $ be the mobile part of  $\vert K_{C_1\times C_2}\vert^G$. 
\\
We blow up the base-points of  $ M$, take the pullback of  $M$ and remove the fixed part of this new linear system. We repeat the procedure, until we obtain a  base-point free  linear system $\widehat{M} $.  
\begin{lemma}\label{lem: when_map_not_comp_with}
The map $\Phi_{K_S}$ is not composed with a pencil if and only if $\widehat{M}^2$ is positive.
\end{lemma}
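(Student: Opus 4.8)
The claim concerns the
map $\Phi_{K_S}\colon S\dashrightarrow \mathbb{P}^{p_g-1}$, which factors through
$\Phi_{K_S}\circ\lambda_{12}=\Phi_{|K_{C_1\times C_2}|^G}$, resolved by blow-ups to
the base-point free system $\widehat{M}$ on $\widehat{C_1\times C_2}$. I want to
show $\Phi_{K_S}$ is \emph{not composed with a pencil} if and only if
$\widehat{M}^2>0$. The plan is to recall the standard dichotomy for maps given by
base-point free systems: the image $\Sigma\subseteq\mathbb{P}^{p_g-1}$ of
$\Phi_{\widehat M}$ is either a curve (the pencil case) or a surface (equivalently
a $2$-dimensional variety), and these two alternatives are detected precisely by
the vanishing or positivity of $\widehat{M}^2$.

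**Main argument.** First I would observe that since $\widehat{M}$ is base-point
free, the morphism $\Phi_{\widehat M}\colon \widehat{C_1\times C_2}\to
\mathbb{P}^{p_g-1}$ is a genuine morphism and $\widehat{M}=\Phi_{\widehat M}^{*}
\mathcal{O}(1)$ is nef, so in particular $\widehat{M}^2\ge 0$. The dimension of the
image $\Sigma:=\Phi_{\widehat M}(\widehat{C_1\times C_2})$ is $1$ or $2$ (it is
nonempty and at most $2$ since the source is a surface). If $\dim\Sigma=1$, then
$\Phi_{K_S}$ factors through the map to the curve $\Sigma$ up to normalization,
i.e.\ it \emph{is} composed with a pencil; in that case two general members of
$\widehat M$ are pullbacks of hyperplane sections meeting $\Sigma$ in points, hence
they are fibres of the same pencil and are disjoint, giving $\widehat M^2=0$.
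Conversely, if $\dim\Sigma=2$, then $\widehat M$ separates enough so that two
general members meet in a genuine positive number of points, and the projection
formula gives $\widehat M^2=\deg(\Phi_{\widehat M})\cdot\deg(\Sigma)>0$ since both
factors are positive. This is exactly the content of formula
\eqref{eq: formula_deg_phiS}, where $\widehat M^2=|G|\cdot\deg(\Sigma)\cdot
\deg(\Phi_S)$: the right-hand side is positive precisely when $\deg(\Sigma)\ge 1$,
i.e.\ when $\Sigma$ is a surface, which is the negation of ``composed with a
pencil''.

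**Execution and the delicate point.** Concretely I would argue as follows. By
definition $\Phi_{K_S}$ is composed with a pencil if and only if the image of the
rational map has dimension $1$, equivalently $\dim\Sigma\le 1$. Since
$\widehat M=\Phi_{\widehat M}^{*}\mathcal{O}_{\mathbb{P}^{p_g-1}}(1)$, the
self-intersection $\widehat M^2$ computes the degree of the image cycle: by the
projection formula,
\[
\widehat M^2=\bigl(\Phi_{\widehat M}^{*}\mathcal{O}(1)\bigr)^2
=\deg\bigl(\Phi_{\widehat M}\bigr)\cdot\bigl(\mathcal{O}(1)^2\cdot[\Sigma]\bigr),
\]
where $\deg(\Phi_{\widehat M})$ denotes the degree of $\Phi_{\widehat M}$ onto its
image and is interpreted as $0$ when $\dim\Sigma<2$. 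When $\dim\Sigma=2$ the factor
$\mathcal{O}(1)^2\cdot[\Sigma]=\deg\Sigma\ge 1$ and the generic fibre of
$\Phi_{\widehat M}$ is finite, so $\deg(\Phi_{\widehat M})\ge 1$ and
$\widehat M^2>0$; when $\dim\Sigma\le 1$ the two-dimensional intersection cycle is
supported on the contracted locus and one computes $\widehat M^2=0$ (two general
members of the base-point free pencil are disjoint fibres).

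**Expected obstacle.** The routine direction is ``$\dim\Sigma=2\Rightarrow
\widehat M^2>0$'', which follows immediately from nefness and the projection
formula. The slightly more delicate direction is the converse, showing that
$\dim\Sigma\le 1$ forces $\widehat M^2=0$: here one must argue that when the image
is a curve, the base-point free system $\widehat M$ is \emph{composed with a
pencil}, so its general members are disjoint (being distinct fibres of a fibration
obtained via Stein factorization of $\Phi_{\widehat M}$), whence $\widehat M^2=0$.
The only care needed is the Stein factorization step: $\Phi_{\widehat M}$ onto a
curve $\Sigma$ factors as $\widehat{C_1\times C_2}\to B\to\Sigma$ with $B$ a smooth
curve and connected fibres, and $\widehat M$ is the pullback of a base-point free
$g^r_d$ on $B$, whose members pull back to disjoint fibres. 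I would state this as
the standard fact that a base-point free linear system of projective dimension
$\ge 1$ whose associated morphism has one-dimensional image is composed with an
irrational or rational pencil, and the self-intersection of a fibre is zero.
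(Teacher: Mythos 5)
Your proposal is correct and follows essentially the same route as the paper: both arguments reduce to the dichotomy on the dimension of the image $\Sigma$ of $\Phi_{\widehat{M}}$ and detect it via the self-intersection of the pullback $\widehat{M}=\Phi_{\widehat{M}}^{*}\mathcal{O}(1)$, with the paper phrasing this as $H_{|_\Sigma}^2=0$ iff $\widehat{M}^2=0$ and you phrasing it via the projection formula in the surface case and disjointness of general members (Stein factorization) in the curve case. Your write-up merely makes explicit the details the paper leaves implicit; there is no gap.
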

\begin{proof}
The map $\Phi_{K_S}$ is composed with a pencil if and only if $\Phi_{\widehat{
		M}}$ is composed with a pencil. 
The image $\Sigma$ of $\Phi_{\widehat{
		M}}$ is a curve if and only if we are able to pick-up two general hyperplanes $H_1$ and  $H_2$ of $\mathbb P^{p_g-1}$ such that $H_{|_\Sigma}^2=H_1\cdot H_2\cdot \Sigma=0$. However, $\widehat{M}=\Phi_{\widehat{M}}^*(H)$, hence $H_{|_\Sigma}^2$ is zero if and only if $\widehat{M}^2$  is equal to zero. 
\end{proof}
Let us suppose $\widehat{M}^2>0$, so that 
$\Phi_{K_S}$ has image $\Sigma$ of dimension $2$. In this case, then $\Phi_{\widehat{M}}$ is a finite morphism, and by projection formula
\[
\widehat{M}^2=\deg (\Phi_{\widehat{M}})\deg(\Sigma)=\deg(\Phi_{K_S})\deg(\Sigma)\vert G\vert,
\]
which gives Formula \eqref{eq: formula_deg_phiS}.

\subsection{The correction term formula} 
\label{subsec: correction_term_formula}
As remarked in the introduction of this chapter, $M^2-\widehat{M}^2$  is 
the sum of the correction terms arising from each isolated base-point of $M$, the mobile part of the linear subsystem $\vert K_{C_1\times C_2}\vert^G$.

The contribution to the correction term of any isolated base-point may be easily computed whenever $S$ satisfies \hyperlink{Property $\left(\#\right)$}{Property $\left(\#\right)$}.

Let us fix a base-point $(p_1,p_2)\in C_1\times C_2$ of the mobile part $M$.  The point $p_1$ is over $q\in C_1/G$ and $p_2$ is over $l\in C_2/G$. Let us fix an irreducible character $\chi$. We can always choose a general basis of $H^{1,0}(C_1)^\chi$ such that each one-form of the basis has the minimum vanishing order $t_q^\chi$  at $p_1$, which is the natural integer computed in Lemma \ref{lem: t_q_eq_to}. 
\\
Similarly, we can choose a general basis of $H^{1,0}(C_2)^{\overline{\chi}}$ such that each one-form of the basis has minimum vanishing order $t_l^{\overline{\chi}}$ at $p_2$. The choice of this pair of bases gives via tensor product a natural basis of $H^{1,0}(C_1)^\chi\otimes H^{1,0}(C_2)^{\overline{\chi}}$, which is a $G$-invariant subspace since \hyperlink{Property $\left(\#\right)$}{Property $\left(\#\right)$} holds, namely $\chi$ is of degree one. This permits us to conclude that the divisors  spanning the linear  subsystem $\vert K_{C_1\times C_2}\vert^G$ can be written in a neighbourhood of $(p_1, p_2)$ as
\[
t_q^\chi R_q^{vert}+ t_l^{\overline{\chi}}R_l^{hor}, \qquad \chi \makebox{ \ such that \ }  \quad \langle \chi_{can}^1,\chi\rangle \neq 0, \langle \chi_{can}^2,\overline{\chi}\rangle \neq 0.
\]
Finally, it is sufficient to remove the fixed part of $\vert K_{C_1\times C_2}\vert^G$ computed in Lemma \ref{lem: fixed_part_Kc1c2G} to get how the divisors spanning $M$ are written in a neighbourhood of $(p_1, p_2)$. 
So, the linear system $M$ is spanned by $p_g$ divisors locally near $(p_1, p_2)$ of the form 
\[
a_1R_q^{vert}+b_1R_l^{hor}, \qquad \dots  \qquad a_{p_g}R_q^{vert}+b_{p_g}R_l^{hor}.
\]
Since we assumed that $(p_1,p_2)$ is a base-point and $M$ has not fixed components, then without loss of generality $a_1=b_2=0$. 
\\
Note that $R_q^{vert}$ and $R_l^{hor}$ are smooth and intersect transversally at $(p_1, p_2)$.

In Theorem \ref{thm:Correction_term_formula} we give a general formula to compute directly the contribution of $(p_1, p_2)$ to the correction term $M^2-\widehat{M}^2$ whenever $p_g$ is equal to three. 

The rest of this subsection proves Theorem \ref{thm:Correction_term_formula}. 
\vspace{0,3cm}

Let us consider a slightly more general setting: let  $ M $ be a (not necessarily complete) two-dimensional linear system on a surface $S$ spanned by $D_1$, $D_2$, and $D_3$. Assume that $M$ has only isolated base-points, smooth for $S$, and that in a neighborhood of a basepoint $p$ we can write  the divisors 
$D_i$ as 
\[
D_1=aH, \quad D_2=bK \quad \makebox{and} \quad D_3= cH+d K.
\]
Here $H$ and $K$ are reduced, smooth, and intersect transversally at $p$ and $a,b,c,d$ are non-negative integers, $b\leq a$. 

Let $\widehat{M}$ be the linear system obtained as follows:
we blow-up the basepoint $p$, take the pullback of the mobile part of $ M $ and remove the fixed part of this new linear system. If an infinitely near point of $p$ is a base-point for this linear system, then repeat the procedure, until we obtain a (not necessarily complete) linear system $\widehat{M}$
such that no infinitely near point of $p$ is a base point of $\widehat{M} $.  

\textbf{Definition.} The linear system $\widehat{M}$ is called \textit{strict transform} of $ M $ at $p$. 

Firstly, we present a stronger version of \cite[Lemma 2]{fede1}.
\begin{lemma}\label{FedericoLemma} 
	Assume that $bc+ad\geq ab$. Then $M^2-\widehat{M}^2 =ab$.
\end{lemma}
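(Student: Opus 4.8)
The plan is to reduce the computation of $M^2-\widehat M^2$ to a purely local intersection number at $p$, and then to evaluate that number by a branch/Newton-polygon computation in which the hypothesis $bc+ad\ge ab$ plays the decisive role.

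First I would record the standard effect of a single blow-up on self-intersection: if $\pi$ is the blow-up of a point where the mobile system has multiplicity $m$, with exceptional divisor $E$, and $\widehat D=\pi^*D-mE$ is the strict transform of a general member $D$, then $\widehat D^2=(\pi^*D-mE)^2=D^2-m^2$. Iterating over $p$ and its infinitely near points in the resolution defining $\widehat M$ gives $M^2-\widehat M^2=\sum_i m_i^2$, where the $m_i$ are the successive multiplicities of $M$ at $p$ and its infinitely near base-points. By the classical formula expressing the local intersection multiplicity of two curves through their multiplicity sequences, and since two general members $D,D'$ of $M$ share the same multiplicity sequence at $p$, this sum equals the local intersection number $(D\cdot D')_p$. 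As only $p$ is resolved, contributions away from $p$ cancel in the difference, so $M^2-\widehat M^2=(D\cdot D')_p$.

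Next I would compute $(D\cdot D')_p$ locally. Choosing coordinates with $H=\{x=0\}$ and $K=\{y=0\}$, the three spanning divisors have local equations $x^a,\ y^b,\ x^cy^d$, so two general members are $f=\lambda_1 x^a+\lambda_2 y^b+\lambda_3 x^cy^d$ and $g=\mu_1 x^a+\mu_2 y^b+\mu_3 x^cy^d$ with generic coefficients, and $(D\cdot D')_p=\dim_{\mathbb C}\mathcal O_{S,p}/(f,g)$. Eliminating the mixed monomial by a generic combination produces $h:=\alpha x^a+\beta y^b\in(f,g)$ with $\alpha,\beta\neq 0$; since the coefficient of $x^cy^d$ in $f$ is generically nonzero one checks $(f,g)=(f,h)$, whence $(D\cdot D')_p=(f\cdot h)_p$. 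Finiteness of the colength forces $f$ and $h$ to have no common branch.

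Finally I would evaluate $(f\cdot h)_p$ by parametrizing the branches of $h=\alpha x^a+\beta y^b$. Writing $g_0=\gcd(a,b)$, $a=g_0a'$, $b=g_0b'$ with $\gcd(a',b')=1$, the curve $h$ has exactly $g_0$ smooth branches, each admitting a primitive parametrization with $\ord(x)=b'$ and $\ord(y)=a'$; thus along every branch $\ord(x^a)=ab'$ and $\ord(x^cy^d)=cb'+da'$. The key observation is that, after dividing by $g_0$, the hypothesis $bc+ad\ge ab$ is exactly equivalent to $cb'+da'\ge ab'$, i.e. $\ord(x^cy^d)\ge\ord(x^a)$ on each branch. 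Reducing $f$ modulo $h$ to $\mu x^a+\lambda_3 x^cy^d$ (with $\mu\neq 0$ generic) and choosing coefficients generically to avoid cancellation in the boundary case $cb'+da'=ab'$, I get $\ord_\gamma(f)=ab'$ on each of the $g_0$ branches, so $(f\cdot h)_p=\sum_\gamma \ord_\gamma(f)=g_0\cdot ab'=ab$. Combining with the first step yields $M^2-\widehat M^2=ab$.

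The main obstacle is this last step: organizing the branch parametrization of $h$ and verifying that genericity of the coefficients rules out order-raising cancellations on any branch, especially in the boundary case $bc+ad=ab$. This is exactly where the hypothesis is sharp, since if $bc+ad<ab$ the mixed monomial lowers the order on some branch and the intersection number drops below $ab$.
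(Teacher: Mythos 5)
Your proof is correct, but it takes a genuinely different route from the paper's. The paper argues by induction on the pair $(a,b)$ in lexicographic order: it blows up $p$ once, uses $c+d\geq b$ (which follows from $bc+ad\geq ab$ and $b\leq a$) to identify the removed fixed part as $bE$, observes that the only possible new base point is $\widehat K\cap E$ with new local data $(a',b',c',d')=(b,\,a-b,\,d,\,c+d-b)$ or $(a-b,\,b,\,c+d-b,\,d)$, checks that the inductive hypothesis $b'c'+a'd'\geq a'b'$ is preserved, and concludes $M^2-\widehat M^2=b^2+b(a-b)=ab$. You instead convert $M^2-\widehat M^2$ into the local intersection number $(D\cdot D')_p$ of two general members, via the blow-up formula and Noether's formula on multiplicity sequences, and then evaluate that number algebraically: the ideal identity $(f,g)=(f,h)$ with $h=\alpha x^a+\beta y^b$ (valid for generic coefficients since $\lambda_3\neq 0$), followed by a Puiseux/branch analysis of the binomial $h$, whose $\gcd(a,b)$ branches each satisfy $\ord(x)=b/\gcd(a,b)$ and $\ord(y)=a/\gcd(a,b)$, so that the hypothesis $bc+ad\geq ab$ says exactly that the mixed monomial never drops the order below $\ord(x^a)$ on any branch; genericity of coefficients rules out cancellation in the boundary case, giving $(f\cdot h)_p=ab$. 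What your route buys is uniformity: the same branch computation yields $(f\cdot h)_p=\min\{ab,\,bc+ad\}$ with no case hypothesis at all, so it proves Lemma \ref{FedericoLemma2} and hence Theorem \ref{thm:Correction_term_formula} in one stroke, whereas the paper runs a second, longer induction for the other case; the price is heavier standard input (Noether's formula, genericity properties of linear systems, branch parametrization) against the paper's completely elementary blow-up bookkeeping. Two points you should make explicit to make the reduction airtight: first, the identity $M^2-\widehat M^2=\sum_i m_i^2=(D\cdot D')_p$ requires both that general members realize the multiplicity $m_i$ of the system at every infinitely near base point over $p$, and that the strict transforms of two general members become disjoint over $p$ once all base points over $p$ are resolved (this is what guarantees that Noether's sum runs exactly over the base points and nothing more); second, the local equations of $D_1$, $D_2$, $D_3$ are $x^a$, $y^b$, $x^cy^d$ only up to unit factors, which is harmless --- after a local coordinate change absorbing the units of the first two terms only the mixed term carries a unit, and orders along branches are unchanged --- but it deserves a sentence.
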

\begin{proof}
	We prove the lemma by induction on $(a,b)$, with $b\leq a$. Here we are considering the lexicographic order $\leq$ defined on the lower half plane $\Delta^\geq:=\{(a,b) \colon a\geq b\}\subseteq \mathbb{N}\times \mathbb{N}$ as follows:
	\[
	(a',b')\leq (a,b) \text{\ if \ and \ only \ if \ }  a'< a  \text{ \ or \ } a'=a \text{ \ and \ } b'\leq b.
	\]
	In this case, $\Delta^\geq$ admits the \textit{well-ordering principle} and so it holds the \textit{mathematical induction}. \\
	Suppose that $(a,b)=0$. Then $M$ is base-point free and so $\widehat{M}^2=M^2=M^2-ab$. Now suppose that the statement is true for $(a',b')<(a,b)$. We aim to prove it for $(a,b)$. We blow up the base-point $p$,  take the pullback of the divisors $D_i$, and remove the fixed part, which is the exceptional divisor  $bE$  of the blowup. In fact the pullback of $D_3$ contains $c+d$ times $E$ and $ c+d\geq b$, thanks to $b\leq a$ and to the assumption $bc+ad\geq ab$:
	\[
	a(c+d)\geq bc+ad\geq ab, \qquad \makebox{so} \qquad c+d\geq b.
	\]
	Restricted to  the preimage of our neighborhood of $p$,  these divisors are: 
	\[
	a\widehat{H}+(a-b)E, \qquad  b\widehat{K}\qquad \makebox{and} \qquad c\widehat{H}+d\widehat{K} +(c+d-b)E. 
	\]
	Here, $\widehat{H}$ and $\widehat{K}$  are the strict transforms of $H$ and $K$. Let $\widehat{M}$ be the linear system generated 
	by these three divisors, then $\widehat{M}^2=M^2-b^2$. If $a=b$ or $b=0$, then $\widehat{M}$ is base-point free and we are done. Otherwise, on the preimage, the linear system $\widehat{M}$ has precisely one 
	new base-point: the intersection point  of $\widehat{K}$ and  $E$. Locally near this point the three divisors spanning $\widehat{M}$ are: 
	\[
	(a-b)E, \qquad  b\hat{K}\qquad \makebox{and} \qquad d\hat{K} +(c+d-b)E.   
	\]
	
	We need to distinguish two cases, when $(a-b)< b$ or when $(a-b)\geq  b$. In the first case $(a-b)< b$ we get $(b,a-b)<(a,b)$. We define new coefficients $a':=b$, $b':=a-b$, $c':=d$ and $d':=c+d-b$. Otherwise if $(a-b)\geq b$, then $(a-b,b)<(a,b)$, and we define $a':=a-b$, $b':=b$, $c':=c+d-b$, and $d':=d$. For both cases, the new coefficients fulfill the inductive hypothesis, because:
	\\
	Thanks to $bc+ad\geq ab$, we have 
	\[
	\begin{split}
		b'c'+a'd' & =(a-b)d+b(c+d-b) \\ & =ad+bc-b^2 \\
		& \geq ab-b^2=(a-b)b\\
		& =a'b'.
	\end{split}
	\]
	By induction, the self-intersection of the new linear system $\widehat{M}$ is equal to 
	\[
	\widehat{M}^2=(M^2-b^2)-b(a-b)=M^2-ab.
	\]
\end{proof}
\begin{lemma}\label{FedericoLemma2} Assume that $bc+ad\leq ab$. Then $ M^2-\widehat{M}^2=ad+bc$.
	\end{lemma}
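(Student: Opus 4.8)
The plan is to mirror the inductive proof of Lemma \ref{FedericoLemma}, running the same induction on the pair $(a,b)$ in the lexicographic order on $\Delta^{\ge}=\{(a,b):a\ge b\}$, but now in the complementary regime $bc+ad\le ab$. The base case $(a,b)=(0,0)$ is trivial, since $M$ is then base-point free near $p$ and $ad+bc=0$. For the inductive step I first blow up $p$ and take pullbacks; the three generators acquire $E$ with multiplicities $a$, $b$ and $c+d$, so the fixed part to be removed is $\mu E$ with $\mu=\min(a,b,c+d)=\min(b,c+d)$ (using $b\le a$). This value of $\mu$ depends on the regime, which is precisely why the argument must split from that of Lemma \ref{FedericoLemma}.

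If $c+d\ge b$, then $\mu=b$ and the situation is formally identical to Lemma \ref{FedericoLemma}: removing $bE$ costs $b^{2}$, and a single new base point appears at $\widehat K\cap E$, with local data $(a',b',c',d')$ equal to $(a-b,b,c+d-b,d)$ or $(b,a-b,d,c+d-b)$ according to whether $a-b\ge b$. In either labelling one computes $a'd'+b'c'=ad+bc-b^{2}$ and $a'b'=ab-b^{2}$, so the new data again satisfies $b'c'+a'd'\le a'b'$ (this is exactly $bc+ad\le ab$) and the pair $(a',b')$ is strictly smaller. The inductive hypothesis then gives $M^{2}-\widehat M^{2}=b^{2}+(ad+bc-b^{2})=ad+bc$.

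The genuinely new case is $c+d<b$, where $\mu=c+d$: removing $(c+d)E$ costs $(c+d)^{2}$, and now $D_3$ loses its entire $E$-component, so two distinct infinitely near base points survive, namely $\widehat H\cap E$ (when $c>0$) and $\widehat K\cap E$ (when $d>0$), with independent local models. At $\widehat H\cap E$ the three divisors are $a\widehat H+(a-c-d)E$, $(b-c-d)E$ and $c\widehat H$, whose two pure members have exponents $c$ and $b-c-d$; since $bc+ad\le ab$ forces $c(a-c-d)+(b-c-d)a\ge c(b-c-d)$, this configuration lies in the regime of the already-proven Lemma \ref{FedericoLemma} and contributes $c(b-c-d)$. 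By the symmetry $x\leftrightarrow y$, $a\leftrightarrow b$, $c\leftrightarrow d$, the point $\widehat K\cap E$ contributes $d(a-c-d)$. Adding up,
\[
M^{2}-\widehat M^{2}=(c+d)^{2}+c(b-c-d)+d(a-c-d)=ad+bc,
\]
as required. I expect this two-point case to be the main obstacle: one must check that both sub-configurations have strictly smaller induction data and fall into the regime covered by Lemma \ref{FedericoLemma}, and that their local data are read off correctly after the blow-up (in particular that $c\le a$ and $d\le b$, which themselves follow from $bc+ad\le ab$).

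Finally, as an independent check I would recompute $M^{2}-\widehat M^{2}$ as the local intersection multiplicity $I_p(D,D')$ of two general members $D,D'$ of $M$. Choosing local coordinates with $H=\{x=0\}$, $K=\{y=0\}$, a general pair spans the ideal $(x^{a}+p\,x^{c}y^{d},\,y^{b}+q\,x^{c}y^{d})$ for generic $p,q$; since $c\le a$ and $d\le b$ one factors $x^{a}+p\,x^{c}y^{d}=x^{c}(x^{a-c}+p\,y^{d})$ and $y^{b}+q\,x^{c}y^{d}=y^{d}(y^{b-d}+q\,x^{c})$. Additivity of intersection numbers over these factors reduces everything to $I(x^{a-c}+p\,y^{d},\,y^{b-d}+q\,x^{c})$, which a Puiseux-branch computation evaluates to $\min\bigl((a-c)(b-d),\,cd\bigr)$; in the regime $bc+ad\le ab$ this equals $cd$, and collecting terms yields $I_p(D,D')=ad+bc$, confirming the inductive outcome.
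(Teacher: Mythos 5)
Your main inductive argument coincides with the paper's own proof: the same lexicographic induction on $(a,b)$, the same dichotomy according to the size of $c+d$ versus $b$, with the case $c+d\geq b$ closed by the inductive hypothesis via the identity $a'd'+b'c'=ad+bc-b^2\leq ab-b^2=a'b'$, and the case $c+d<b$ closed by two applications of the already proven Lemma \ref{FedericoLemma} at $\widehat{H}\cap E$ and $\widehat{K}\cap E$, contributing $c(b-c-d)$ and $d(a-c-d)$. Two small caveats on that part. First, when $a=b$ or $b=0$ no new base point appears after the first blow-up; the paper treats these sub-cases explicitly (the hypotheses then force $c+d=b=a$, resp.\ $ad=0$, so the correction $b^2$ already equals $ad+bc$). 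Your bookkeeping happens to survive because the induced data then has $a'b'=0$, which under the inductive hypothesis forces zero further correction, but this deserves a sentence. Second, the hypothesis of Lemma \ref{FedericoLemma} at $\widehat{H}\cap E$ follows already from $a\geq b\geq c+d$ (the paper's observation that $d'\geq b'$), not from $bc+ad\leq ab$ as you assert; the attribution is harmless but off.

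What is genuinely different, and worth more than the status of a ``check'', is your closing computation: since $M^2-\widehat{M}^2$ equals the local intersection number $I_p(D,D')$ of two general members, row-reducing the net to the ideal $(x^a+p\,x^cy^d,\ y^b+q\,x^cy^d)$ and computing $cd+c(b-d)+d(a-c)+\min\bigl((a-c)(b-d),\,cd\bigr)$ gives $\min(ab,\,ad+bc)$ in one stroke, i.e.\ it proves Lemma \ref{FedericoLemma}, Lemma \ref{FedericoLemma2} and Theorem \ref{thm:Correction_term_formula} simultaneously, with no induction and no case analysis. The trade-off is that it rests on Noether's formula expressing $I_p(D,D')$ as the sum of products of multiplicities at infinitely near points, and on the generic Newton-polygon evaluation of the last factor; the paper's induction is more elementary and exhibits the resolution of indeterminacy explicitly, which is what the algorithm of Section \ref{sec: degree_can_map} actually implements.
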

	\begin{proof}
		We prove the lemma by induction, once more on  $(a,b)$, with $b\leq a$. Thus we consider the lexicographic order $\leq $ on $\Delta^\geq$, as we have done in the proof of Lemma  \ref{FedericoLemma}.
		
		Suppose that $(a,b)=0$. Then $ M$ is base-point free and so $\widehat{M}=M^2=M^2-(0d+0c)$.
		Now suppose that the statement is true for $(a',b')<(a,b)$. Our aim is to prove it for $(a,b)$. We blow up the base-point $p$,  take the pullback of the divisors $D_i$, and remove the fixed part, which is the exceptional divisor  $(c+d)E$ of the blowup, if  $c+d\leq b$, or the divisor $bE$, otherwise. Hence we need to distinguish two cases. 
		
		Let us suppose first that $c+d\leq b \ (\leq a)$. Restricted to the preimage of our neighborhood of $p$, the divisors are 
		\[
		a\widehat{H}+(a-(c+d))E, \qquad  b\widehat{K}+(b-(c+d))E\qquad \makebox{and} \qquad c\widehat{H}+d\widehat{K}.
		\]
		Here, $\widehat{H}$ and $\widehat{K}$  are the strict transforms of $H$ and $K$. Let $\widehat{M}$ be the linear system generated 
		by these three divisors, then $\widehat{M}^2=M^2-(c+d)^2$. On the preimage, the linear system $\widehat{M}$ has precisely two new base-points: the intersection points of $\widehat{H}$ and $\widehat{K}$ with $E$. Locally near these points the three divisors spanning $\widehat{M}$ are respectively 
		\[
		a\widehat{H}+(a-(c+d))E, \qquad  (b-(c+d))E\qquad \makebox{and} \qquad c\widehat{H},
		\]
		and
		\[
		(a-(c+d))E, \qquad b\widehat{K}+(b-(c+d))E\qquad \makebox{and} \qquad  d\widehat{K}.
		\]
		We claim that for both points the coefficients of these three divisors satisfy the assumption of Lemma \ref{FedericoLemma}. 
		\\
		Let us verify it for the first point $\widehat{H}\cap E$: if $c\geq (b-(c+d))$, then define $a':=c$, $b':=b-(c+d)$, $c':=a$, and $d':=a-(c+d)$, otherwise define $a':=b-(c+d)$, $b':=c$, $c':=a-(c+d)$, and $d':=a$. For both the cases $d'\geq b'$ so that $b'c'+a'd'\geq a'd' \geq a'b'$.
		\\
		Regarding the second point $\widehat{K}\cap E$, we have: if $d\geq (a-(c+d))$, then define $a':=d$, $b':=a-(c+d)$, $c':=b$, and $d':=b-(c+d)$, otherwise define $a':=a-(c+d)$, $b':=d$, $c':=b-(c+d)$, $d':=b$. In the first case $c'\geq a'$, while in the second case $d'\geq b'$. Therefore we get $ b'c'+a'd'\geq a'b'$ for both cases.
		\\
		Thus Lemma \ref{FedericoLemma} applies for both points and  the self-intersection of  the new linear system $\widehat{M}$ at the final step is amount to
		\[
		\widehat{M}^2=(M^2-(c+d)^2)-(b-(c+d))c-(a-(c+d))d=M^2-(ad+bc).
		\]
		It remains to discuss the case $c+d\geq b$. 
		\\
		As we have already done before, we blow up the base-point $p$, take the pullback of the divisors $D_i$, and remove the fixed part, which this time is the exceptional divisor $bE$ of the blowup. Restricted to  the preimage of our neighborhood of $p$,  these divisors are: 
		\[
		a\widehat{H}+(a-b)E, \qquad  b\widehat{K}\qquad \makebox{and} \qquad c\widehat{H}+d\widehat{K} +(c+d-b)E. 
		\]
		Here $\widehat{M}^2=M^2-b^2$. If $b=0$ or $a=b$, then $\widehat{M} $ is base-point free. In the first case $b=0$, we get $ad=bc+ad\leq ab=0$, so $\widehat{M}^2=M^2-b^2=M^2=M^2-(ad+bc)$, and we are done. In the second case $a=b$, we get, thanks to the assumptions $ad+bc\leq ab$ and $b\leq c+d$, that
		\[
		\begin{split}
			a(c+d) & =ad+bc \\
			& \leq ab \\
			& \leq a(c+d)
		\end{split}, \qquad \makebox{so} \qquad c+d=b=a.
		\]
		Also in this case we are done, because $\widehat{M}^2=M^2-b^2=M^2-(ad+bc)$.
		\\
		It remains to consider when $a-b=0$ or $b=0$ does not hold. In this case, on the preimage, the linear system $\widehat{M}$ would have precisely one 
		new base-point, the intersection point  of $\widehat{K}$ and  $E$. Locally near this point the three divisors spanning $\widehat{M}$ are: 
		\[
		(a-b)E, \qquad  b\hat{K}\qquad \makebox{and} \qquad d\hat{K} +(c+d-b)E.   
		\]
		We need to distinguish two cases, when $(a-b)< b$ or when $(a-b)\geq  b$. In the first case $(a-b)< b$ we get $(b,a-b)<(a,b)$. We define new coefficients $a':=b$, $b':=a-b$, $c':=d$ and $d':=c+d-b$. Otherwise if $(a-b)\geq b$, then $(a-b,b)<(a,b)$, and we define $a':=a-b$, $b':=b$, $c':=c+d-b$, and $d':=d$. For both cases, the new coefficients fulfill the inductive hypothesis, because:
		\\
		Thanks to $bc+ad\leq ab$, we have
		\[
		\begin{split}
			b'c'+a'd' & =(a-b)d+b(c+d-b) \\ & =ad+bc-b^2 \\
			& \leq ab-b^2=(a-b)b\\
			& =a'b'.
		\end{split}
		\]
		By induction, the self-intersection of the new linear system $\widehat{M}$ is equal to 
		\[
		\begin{split}
			\widehat{M}^2 & =(M^2-b^2)-(a'd'+b'c')
			\\ &
			=M^2-b^2-(ad+bc-b^2)
			\\ & =M^2-(ad+bc).
		\end{split}
		\]
	\end{proof}
	By applying Lemma \ref{FedericoLemma} and Lemma  \ref{FedericoLemma2} it follows directly
	\begin{theorem}[Correction Term Formula]\label{thm:Correction_term_formula}
		Let  $M$ be a two-dimensional linear system on a surface $S$ spanned by $D_1$, $D_2$, and $D_3$. Assume that $M$ has only isolated base-points, smooth for $S$, and that in a neighborhood of a basepoint $p$ we can write  the divisors 
		$D_i$ as 
		\[
		D_1=aH, \quad D_2=bK \quad \makebox{and} \quad D_3= cH+d K.
		\]
		Here $H$ and $K$ are reduced, smooth, and intersect transversally at $p$ and $a,b,c,d$ are non-negative integers, $b\leq a$.
		Let $\widehat{M}$ be the \textit{strict transform} of $M$ along $p$. 
		Then
		\[
		M^2-\widehat{M}^2 =\min \left\{ ab,  \ ad+bc
			\right\}  .
			\]
		\end{theorem}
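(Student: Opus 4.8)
The plan is to deduce the formula directly from the two preceding lemmas by a case split on the sign of $bc+ad-ab$. Both Lemma \ref{FedericoLemma} and Lemma \ref{FedericoLemma2} compute the same difference $M^2-\widehat{M}^2$ under complementary hypotheses, so the task reduces to verifying that in each regime the value they produce coincides with $\min\{ab,\,ad+bc\}$.

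First I would treat the case $bc+ad\geq ab$. Here Lemma \ref{FedericoLemma} gives $M^2-\widehat{M}^2=ab$, and since the hypothesis is precisely $ab\leq ad+bc$, we obtain $ab=\min\{ab,\,ad+bc\}$. Symmetrically, in the case $bc+ad\leq ab$ Lemma \ref{FedericoLemma2} gives $M^2-\widehat{M}^2=ad+bc$, and the hypothesis now reads $ad+bc\leq ab$, so again $ad+bc=\min\{ab,\,ad+bc\}$. Since every tuple $(a,b,c,d)$ of non-negative integers satisfies at least one of the two inequalities, the two lemmas together exhaust all possibilities and yield the claimed identity.

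The only point requiring a word of care is the overlap $bc+ad=ab$, where both lemmas apply at once; but there their conclusions agree, since $ab=ad+bc$ forces $\min\{ab,\,ad+bc\}=ab=ad+bc$, so no inconsistency arises. At the level of the theorem there is thus no genuine obstacle: all the substantive effort has already been spent in the inductive proofs of Lemmas \ref{FedericoLemma} and \ref{FedericoLemma2}, where the delicate bookkeeping of successive blow-ups, the identification of the fixed part at each stage, and the reduction of the coefficient tuple under the lexicographic induction on $(a,b)$ are carried out. The hard part therefore lies entirely in those two lemmas; the deduction of the theorem from them is purely a matter of matching the two regimes against the minimum.
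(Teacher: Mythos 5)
Your proposal is correct and matches the paper's argument exactly: the paper states that the theorem ``follows directly'' by applying Lemma \ref{FedericoLemma} (when $bc+ad\geq ab$, giving $ab$) and Lemma \ref{FedericoLemma2} (when $bc+ad\leq ab$, giving $ad+bc$), which is precisely your case split, with the overlap case trivially consistent. You have merely spelled out the routine matching against the minimum that the paper leaves implicit.
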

	
	\subsection{Example of the computation of the degree of the canonical map}\label{subsec: example_comp_deg_can_map}
In this section we give an example how to compute the degree of the canonical of a regular product-quotient surface of geometric genus three, whenever \hyperlink{Property $\left(\#\right)$}{Property $\left(\#\right)$} holds. In addition, in this way we also show the main steps in the operation of the MAGMA script for calculating the degree of the canonical map.
	
Let us consider the family of surfaces no. 1. in \cite[Thm 2.3]{fede2}, which have degree of the canonical map $18$. 

Surfaces $S$ of no.1 of  \cite[Thm 2.3]{fede2} can be described by the following pair of spherical systems of generators of the group $G=S_3\times \mathbb Z_3^2$:
\[
\begin{tabular}{c | c | c | c }
	& $q_1$ & $q_2$ & $q_3$   \\
	\hline
	\makebox{branch \ point} & $(1:1)$ & $(0:1)$  & $ (-1:1)$   \\
	\hline
	\makebox{generator} & $ g_1:=\left(\tau,(1,0)\right) $ & $g_2:=(\sigma^2,(2,2)) $  & $g_3:=(\sigma\tau,(0,1)) $  \\
\end{tabular}
\]
\[
\small\begin{tabular}{c | c | c | c| c }
	& $q_1$ & $q_2$  & $ q_3$  	& $q_4$   \\
	\hline
	\makebox{branch point} & $(1:1)$ & $(0:1)$  & $ (1: \lambda)$& $(-1:1)$     \\
	\hline
	\makebox{generator} & $ h_1:=\left(\sigma\tau,0\right) $ & $h_2:=(\sigma,(1,0)) $ & $h_3:=(\Id,(1,1)) $ & $h_4:=(\tau,(1,2))$  \\
\end{tabular}, 
\]
Here $\sigma$ and $\tau$ are a rotation ($3$-cycle) and a reflection (transposition) of the group $S_3$ respectively. 
\\
Instead, $q_i$ are the branch points of the pair of $G$-coverings $C_i\to\mathbb P^1$ defining $S$, and the respective generator of $q_i$ in the tables is the local monodromy of a point over $q_i$. 

Notice that the second covering $C_2\to \mathbb P^1$ depends from one parameter $\lambda$, with $\lambda\neq -1,1$ since $C_2$ is smooth. 

Consider the three irreducible characters of $S_3$, so the trivial character $1$, the character $\textit{sgn}$ computing the sign of a permutation, and the only $2$-dimensional irreducible character $\mu:=\frac{1}{2}\left(\chi_{reg}-sgn-1\right)$, where $\chi_{reg}$ is the character of the regular representation of $S_3$.
\\
Let us also fix a basis $e_1, e_2$ of $\mathbb Z_3^2$  and consider the dual characters $\epsilon_1$, $\epsilon_2$ of $e_1$ and $e_2$, i.e. the characters defined by
\[
\epsilon_i(ae_1+be_2):=\zeta_3^{a\delta_{1i}+b\delta_{2i}}, \qquad \zeta_3:=e^{\frac{2\pi i }{3}},
\]
where $\delta_{ij}$ is the Kronecker delta. 
\vspace{0,3cm}

We apply  Chevalley-Weil formula \cite[Thm. 1.3.3]{G16} to both the curves $C_1$ and $C_2$ defining $S$ to compute the canonical characters $\chi_{can}^1$ and $\chi_{can}^2$ respectively: 
\begin{equation*}
	\begin{split}
	\chi_{can}^1 & =\epsilon_1^2\cdot\epsilon_2^2+sgn\cdot\epsilon_1\cdot\epsilon_2+sgn\cdot\epsilon_2+sgn\cdot\epsilon_1+\mu\cdot\epsilon_1\cdot\epsilon_2+\mu\cdot\epsilon_1^2\cdot\epsilon_2+\mu\cdot \epsilon_1\cdot\epsilon_2^2; \\
		\chi_{can}^2 &= sgn \cdot \epsilon_1^2\cdot \epsilon_2+sgn\cdot\epsilon_1^2\cdot \epsilon_2^2+sgn\cdot \epsilon_1\cdot \epsilon_2+sgn\cdot \epsilon_1+sgn\cdot \epsilon_2^2+\mu\cdot \epsilon_1 \\ &+\mu\cdot\epsilon_2+2\mu\cdot \epsilon_2^2+sgn\cdot\epsilon_1^2+\epsilon_1^2+\mu\cdot \epsilon_1^2+\mu\cdot\epsilon_1\cdot\epsilon_2,
	\end{split}
\end{equation*}
We notice that the irreducible characters $\chi$ such that $\chi$ occurs on $\chi_{can}^1$ and $\overline{\chi}$ occurs on $\chi_{can}^2$ have degree one, so \hyperlink{Property $\left(\#\right)$}{Property $\left(\#\right)$}  is satisfied. These characters are precisely:
\[
sgn\cdot \epsilon_1\cdot \epsilon_2 ,\qquad sgn \cdot \epsilon_2,  \qquad \makebox{and} \qquad sgn \cdot \epsilon_1.
\]
From Theorem \ref{ThmFormulaH^2,0(S)} we have $H^{2,0}(S)=\left(H^{1,0}(C_1)\otimes H^{1,0}(C_2)\right)^{S_3\times \mathbb Z_3^2}$ decomposes into three pieces of dimension one: 
\[
H^{1,0}(C_1)^{sgn\cdot \epsilon_1\cdot \epsilon_2}\otimes H^{1,0}(C_2)^{sgn\cdot \epsilon_1^2\cdot \epsilon_2^2}, \quad H^{1,0}(C_1)^{sgn \cdot \epsilon_2}\otimes H^{1,0}(C_2)^{sgn \cdot \epsilon_2^2}, 
\]
\[
H^{1,0}(C_1)^{sgn \cdot \epsilon_1}\otimes H^{1,0}(C_2)^{sgn \cdot \epsilon_1^2}.
\]
Corollary \ref{cor: base_locus_Kc1c2_chi_G} determines which is respectively a generator of the associated linear subsystem given by each of these pieces: 
\[
\begin{split}
	& R_{(0,1)}^{vert}+R_{(1,\lambda)}^{hor}+2R_{(-1,1)}^{hor},\\
	& 2R_{(1,1)}^{vert}+ 2R_{(0,1)}^{hor},\\
	& 2R_{(-1,1)}^{vert}+ 4R_{(-1,1)}^{hor}.
\end{split}
\]
Thus, the above three divisors are spanning the linear system $\vert K_{C_1\times C_2}\vert^{S_3\times \mathbb Z_3^2}$. 

Notice then $\vert K_{C_1\times C_2}\vert^{S_3\times \mathbb Z_3^2}$ has not fixed part, so that 
\[
M^2=\left(2R_{(1,1)}^{vert}+ 2R_{(0,1)}^{hor}\right)^2=4\cdot 2 \cdot \left(R_{(1,1)}^{vert}\cdot R_{(0,1)}^{hor}\right)=8\frac{54}{6}\cdot \frac{54}{3}=24\cdot 54.
\]
	Furthermore,  $\vert K_{C_1\times C_2}\vert^{S_3\times \mathbb Z_3^2}$ has precisely $81$ (non reduced) isolated base-points $R_{(1,1)}^{vert}\cap R_{(-1,1)}^{hor}$.
	We can compute $M^2- \widehat{M}^2$ by applying \textit{Correction term formula} \ref{thm:Correction_term_formula}, recursively for each base-point of $\vert K_{C_1\times C_2}\vert^{S_3\times \mathbb Z_3^2}$. Indeed, in a neighbourhood of each of these base-points the three divisors are respectively 
	\[
	2R_{(-1,1)}^{hor}, \qquad 2R_{(1,1)}^{vert} \qquad \makebox{and} \qquad 4R_{(-1,1)}^{hor},
	\]
	and since $R_{(1,1)}^{vert}$ and $R_{(-1,1)}^{hor}$ are transversal, then we are in the situation of Theorem \ref{thm:Correction_term_formula}, with $H=R_{(-1,1)}^{hor}$ and $K=R_{(1,1)}^{vert}$,  $a=4, b=c=2$, and $d=0$. This implies $ad+bc=4\leq ab=8$. The correction term is $ab+cd=4$ for each of the $81$ base-points. Thus 
	\[
	M^2-\widehat{M}^2=4\cdot 81.\]
	The degree of the canonical map is therefore given by 
	\[
	\deg(\Phi_{K_S})=\frac{1}{54}\widehat{M}^2=\frac{1}{54}\left(M^2 - (M^2-\widehat{M}^2)\right)=\frac{1}{54}\left(54 \cdot  24- 4\cdot 81\right)=18. 
	\]
	
	\section{Some remarks on the computational complexity}\label{sec: comp_complexity}
	In this section we discuss the efficiency of the algorithm described in Subsection \ref{subsec: descr_implem_class_algorithm}. 
	\\
	As already said in the previous sections, we adopt the computational algebra system MAGMA \cite{BCP97} although all the calculations can be done through others computational algebra programs with a database of finite groups, such as GAP4. 
	\\
	Firstly, we compare the efficiency of our script with respect to the previous versions developed in \cite{BC04}, \cite{BCG08}, \cite{BP12}, \cite{BP16}, and \cite{Gle15}. 
	\\
	Thus, in the following table we firstly report the computation time and memory usage of \textit{FindSurfaces\_with\_Fixed\_Ksquare\_chi(ExistingSurfaces(ListGroups(1,$K^2$)))} for each $K^2\in \{-1,\dots, 8\}$, which returns automatically all regular surfaces with $\chi(\mathcal O_S)=1$ and $K^2_S=K^2$ that are product-quotient surfaces:
	\begin{table}[h]
		\vspace{0pt}
		\centering
		\begin{tabular}{|c||c|c| c|c| c| c| c| }
			\hline
			$K^2$ & $8$ & $7$ & $6$ & $5$ & $4$ &$3$ & $2$\\
			\hline
			time ($s$) & $332.79$ & $0.0$ & $712.28$ & $111.93$ & $1360.74$ & $917.95$ & $1268.29$ \\  
			\hline
			memory (MB)& $216.88$ & $32.09$ & $184.78$ & $184.75$ & $184.78$ & $184.75$ & $216.75$\\
			\hline
		\end{tabular}
		\caption{}
		\label{tab: comp_complex_chi=1}
	\end{table}
\begin{table}[h]
	\vspace{0pt}
	\centering
	\begin{tabular}{|c||c|c|c|}
		\hline
		$K^2$ & $1$ & $0$ & $-1$\\
		\hline
		time ($s$) & $812.59$ & $15153.71$ & $343947.72$\\  
		\hline
		memory (MB)& $216.78$ & $325.72$ & $431$\\
		\hline
	\end{tabular}
	\caption{}
	\label{tab: comp_complex_chi=1_2}
\end{table}
\begin{remark}
	We compared our results with respect to those of \cite{BC04} and \cite{BCG08} (for $K^2=8$) and those listed in the tables of \cite{BP12} (for $1\leq K^2\leq 8$). We noticed that there are two mistakes, since the authors forgot the possibly exchanging of the factors which provides only one irreducible family of surfaces instead of two,  so $N=1$, in the cases $G=\mathbb Z_5^2$ and $G=\mathbb Z_5^2\rtimes \mathbb Z_3$.
	
	The mistake found for $G=\mathbb Z_5^2$ was already mentioned in \cite[Remark 3.2 (3)]{BCF12}, while that for $G=\mathbb Z_5^2\rtimes \mathbb Z_3$ seems never discovered to our knowledge. 
\end{remark}
\begin{remark}
	Comparing the results for $K^2=0$ with respect those of \cite{BP16}, we noticed that \cite[Table 1]{BP16} does not contain the following two other cases: 
	\begin{table*}[h]
		\vspace{0pt}
		\centering
		\begin{tabular}{|c|c|c|c|}
			\hline
			$Sing(X)$ & $t_1$ & $t_2$ & $Id(G)$\\
			\hline
			\hline
			$1/4$, $1/2^4$, $3/4$& $2,4,6$ & $2,4,6$ & $\langle72,40\rangle$\\  
			$1/4$, $1/2^4$, $3/4$& $2,4,5$ & $2,4,6$& $\langle 120,34\rangle$\\
			\hline
		\end{tabular}
		\caption{}
		\label{tab: missed_cases}
	\end{table*}
  \textcolor{white}{..................} We verified that the MAGMA script of \cite{BP16} returns also these results, so the authors just forgot to include them in their list of Table 1. 
  
  We point out also that our code returns other three results than those of \cite[Table 1]{BP16} and \ref{tab: missed_cases}, listed in Table \ref{tab: missed_cases_2}.
  \begin{table}[h]
  	\vspace{0pt}
  	\centering
  	\begin{tabular}{|c|c|c|c|}
  		\hline
  		$Sing(X)$ & $t_1$ & $t_2$ & $Id(G)$\\
  		\hline
  		\hline
  		$2/5$, $1/2^4$, $3/5$& $2,4,5$ & $2,4,5$ & $\langle160,234\rangle$\\  
  		$1/3^2$, $1/2^2$, $2/3^2$ & $3,3,4$ & $3,3,4$& $\langle 48, 3\rangle$\\
  		$1/3^2$, $1/2^2$, $2/3^2$& $3,3,4$ & $2,3,7$& $\langle 168, 42\rangle$\\
  		\hline
  	\end{tabular}
  	\caption{}
  	\label{tab: missed_cases_2}
  \end{table}
These cases were not listed in Table 1 of  \cite{BP16} since they do not provide surfaces of general type. Indeed, the number $\xi(X)$ is respectively equal  to $\frac{1}{3}$, $\frac{2}{5}$ and $\frac{2}{5}$ for such cases, so that $\xi(X)<\frac{1}{2}$ and by \cite[Thm 5.3 and Cor 5.4]{BP16} they cannot give surfaces of general type.   

We also excluded manually the secondary output of $ListGroups(0,1)$ (with a similar approach such as that explained in Section \ref{sec: class_chi=4} for the case $(K^2,\chi)=(32,4)$) to prove the following
\begin{theorem}
	Let $S$ be a product-quotient surface with $K^2_S=p_g(S)=q(S)=0$, then one of the following holds: 
	\begin{enumerate}
		\item 	$S$ realizes one of the families of surfaces described in \cite[Table 1]{BP16}, Table \ref{tab: missed_cases}, Table \ref{tab: missed_cases_2}. Furthermore, all these surfaces are minimal and \textit{not} of general type;
		\item $S$ is the surface described in \cite[Prop 7.1]{BP16}. In particular, it is a surface of general type whose minimal model is a numerical Godeaux surface with torsion of order $4$. 
	\end{enumerate}
\end{theorem}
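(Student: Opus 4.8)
The plan is to run the classification algorithm of Subsection \ref{subsec: descr_implem_class_algorithm} with the input $(K^2,\chi)=(0,1)$, which encodes exactly the numerical constraints $K_S^2=p_g(S)=q(S)=0$, since $\chi(\mathcal O_S)=1-q+p_g=1$. First I would invoke Proposition \ref{prop: fixed_Ksquare_and_chi_get_finitely_many} together with \cite[Lem.~1.10]{BP12} to guarantee that there are only finitely many admissible baskets $\mathcal B$, and for each of them only finitely many numerically compatible pairs of signatures $(t_1,t_2)$ and finitely many possible group orders $|G|$. Feeding these data to \textit{FindSurfaces} (via \textit{ListGroups} and \textit{ExistingSurfaces}) then returns the complete list of families whose group order is not among those skipped by \textit{ListGroups}. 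Comparing this main output with \cite[Table~1]{BP16} I would isolate the discrepancies: the two families of Table \ref{tab: missed_cases} that were omitted in \cite{BP16}, and the three families of Table \ref{tab: missed_cases_2}, which were legitimately excluded there because they are not of general type.

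The second and most delicate step is to dispose of the secondary output of \textit{ListGroups}$(0,1)$, i.e.\ the exceptional triples whose group order equals $1024$, exceeds $2000$, or is one of the orders with too many isomorphism classes. I would treat these exactly as in Section \ref{sec: class_chi=4} for the case $(32,4)$: use the constraint \eqref{eq: cond_abelianization_orders} to force $G^{ab}$ to be a common quotient of the abelianizations of the two orbifold groups $\mathbb T(t_i)$; exclude the perfect candidates with the analogue of Lemma \ref{lem: Lemma_PerfectGroups_G} (via \textit{HowToExcludePG}); and whenever $G$ has a normal subgroup $H$ of prime index, descend to $H$ through Proposition \ref{prop: intermediate_quotients}, whose reduced signatures either fall into the already-handled smaller-order range or can be killed by \textit{HowToExclude} followed by \textit{ExistingSurfaces}. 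The outcome of this bookkeeping is that the exceptional triples contribute no family beyond those in the main list, so that together with Step~1 the list of families is precisely \cite[Table~1]{BP16} enlarged by Tables \ref{tab: missed_cases} and \ref{tab: missed_cases_2}.

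It remains to decide minimality and Kodaira dimension for every family on the final list. For minimality I would read off the basket $\mathcal B$ of each quotient model $X$ and apply the criterion of Proposition \ref{prop: minimality_lemma}, so that the exceptional locus of $\rho\colon S\to X$ consists only of the $(-2)$-, $(-3)$- and $(-4)$-configurations allowed there; the few remaining baskets are handled by \cite[Prop.~4.7(3)]{BP12}. For the Kodaira dimension I would compute the invariant $\xi(X)$ attached to each family and invoke \cite[Thm.~5.3 and Cor.~5.4]{BP16}: whenever $\xi(X)<\tfrac12$ the surface is not of general type, which accounts for every family in case~(1); in particular the three families of Table \ref{tab: missed_cases_2} have $\xi(X)\in\{\tfrac13,\tfrac25\}$. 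The single family of general type is the one producing the surface of \cite[Prop.~7.1]{BP16}, whose minimal model is a numerical Godeaux surface with torsion $\mathbb Z_4$; here $S$ itself is non-minimal, being a one-point blow-up of the Godeaux, consistently with $K_S^2=0$.

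The hard part will be the manual treatment of the secondary output in Step~2: as the $(32,4)$ analysis of Section \ref{sec: class_chi=4} shows, ruling out the full list of exceptional triples requires a long chain of abelianization, perfect-group and intermediate-quotient arguments, and one must be careful that each descent by Proposition \ref{prop: intermediate_quotients} is pushed far enough (occasionally to a second commutator $G''$) to reach either a contradiction or an explicitly checkable group. By contrast, once the list of families is fixed, the minimality and general-type determinations are routine applications of Proposition \ref{prop: minimality_lemma} and \cite[Thm.~5.3, Cor.~5.4]{BP16}.
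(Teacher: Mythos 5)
Your overall strategy coincides with the paper's: run the machinery at $(K^2,\chi)=(0,1)$, compare the main output with \cite[Table 1]{BP16} to produce Tables \ref{tab: missed_cases} and \ref{tab: missed_cases_2}, and dispose of the secondary output of \textit{ListGroups}$(0,1)$ by the methods of Section \ref{sec: class_chi=4}. But your bookkeeping has a genuine gap: it loses exactly the surface of case (2). In Step 2 you conclude that the complete list of families is ``precisely'' \cite[Table 1]{BP16} enlarged by Tables \ref{tab: missed_cases} and \ref{tab: missed_cases_2}, and by case (1) every member of these tables is minimal and \emph{not} of general type; yet in Step 3 you declare that one family of your list is of general type, namely the one of \cite[Prop 7.1]{BP16}. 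These two statements are incompatible. The surface of \cite[Prop 7.1]{BP16} is itself a product-quotient surface with $K_S^2=0$ and $\chi(\mathcal O_S)=1$, so the algorithm must detect its triple (basket, pair of signatures, group); since it is not a row of any of the three tables, it has to appear as a separate, additional item of the output, and the correct conclusion of Step 2 is that the classification consists of the three tables \emph{plus} this one further family. As written, your Step 2 either misses the Godeaux family (so the classification is incomplete) or silently contradicts your Step 3.

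The second gap is in your Kodaira-dimension argument for case (1). You claim that $\xi(X)<\frac12$ together with \cite[Thm 5.3, Cor 5.4]{BP16} ``accounts for every family in case (1)''. It cannot: the script of \cite{BP16} searches only for surfaces of general type and automatically discards all cases with $\xi(X)<\frac12$, so the families of \cite[Table 1]{BP16} --- and the two of Table \ref{tab: missed_cases}, which that same script also returns --- all have $\xi(X)\geq\frac12$, and the $\xi$-criterion says nothing about them (if $\xi(X)<\frac12$ held for all of case (1), Table 1 of \cite{BP16} would be empty). The $\xi$-bound disposes only of the three families of Table \ref{tab: missed_cases_2}; that the remaining families are minimal and not of general type is the content of the classification theorem of \cite{BP16}, which the paper inherits by citation rather than re-proves. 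Note also that Proposition \ref{prop: minimality_lemma} is stated for product-quotient surfaces \emph{of general type}, so it cannot be invoked verbatim, as you propose, to establish minimality of the case (1) families.
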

\end{remark}

\begin{remark}
	Regarding the classification obtained for $K^2=-1$, we get one case more than those two  found in \cite{BP16}, see Table \ref{tab: Ksquare=-1}. This happened because the script developed in \cite{BP16} looks for only surfaces of general type and so automatically exclude cases with  $\xi(X)<\frac{1}{2}$. However, the last case found by us has $\xi(X)=\frac{2}{5}$ and so has been automatically excluded. 
	
	Furthermore, we found two irreducible families sharing the same algebraic data of the  group $\mathbb Z_5^2$ instead of only one family  found in \cite{BP16}. 
	\begin{table}[h]
		\vspace{0pt}
		\centering
		\begin{tabular}{|c|c|c|c|}
			\hline
			$Sing(X)$ & $t_1$ & $t_2$ & $Id(G)$\\
			\hline
			\hline
			$1/5$,	$ 2/5^2$,$ 4/5$& $2,5,5$ & $3,3,5$ & $\langle 60, 5\rangle$\\  
			$1/5$, $1/2^4$, $4/5$& $2,4,5$ & $2,4,5$& $\langle 160, 234\rangle$\\
			$1/5^5$& $5,5,5$ & $5,5,5$& $\langle 25, 2\rangle$\\
			\hline
		\end{tabular}
		\caption{}
		\label{tab: Ksquare=-1}
	\end{table}
  We have also excluded manuallly the secondary output of $ListGroups(-1,1)$ to prove the following 
  \begin{theorem}
  	Let $S$ be a product-quotient surface with $K^2_S=-1$, $p_g(S)=q(S)=0$, then $S$ realizes one of the families of surfaces described in Table \ref{tab: Ksquare=-1}. Furthermore, the first two cases of the table give product-quotient surfaces that are minimal and not of general type. Instead, the last case with group $\mathbb Z_5^2$ gives two irreducible families of surfaces that are not minimal and whose minimal model is a numerical Godeaux surface with torsion of order $5$.  
  \end{theorem}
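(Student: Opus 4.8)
The plan is to run the classification algorithm of Subsection~\ref{subsec: descr_implem_class_algorithm} with input $(K^2,\chi)=(-1,1)$ and then analyse geometrically the finitely many families it returns. First I would invoke \emph{Baskets}, \emph{ListOfTypes} and \emph{ListGroups}: by Proposition~\ref{prop: fixed_Ksquare_and_chi_get_finitely_many} there are only finitely many admissible baskets $\mathcal B$, only finitely many signatures $t_1,t_2$ compatible with each $\mathcal B$, and for each such pair the order $|G|$ is fixed by item $b)$, producing a finite list of triples (basket, pair of signatures, group). Running \emph{ExistingSurfaces} discards the triples whose conjugacy-class data do not reproduce $\mathcal B$, and \emph{FindSurfaces} selects one representative per irreducible family, using Theorem~\ref{thm: comp_inverse_image} together with Corollary~\ref{cor: card_set_of_families} to count families correctly, in particular accounting for the exchange of the two factors. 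The main output is exactly the three rows of Table~\ref{tab: Ksquare=-1}; the rest of the work concerns the secondary output and the geometry.

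Next I would dispose of the exceptional triples skipped by \emph{ListGroups} (orders $1024$, $>2000$, or with too many isomorphism types), using the same machinery as in Section~\ref{sec: class_chi=4}. Concretely I would feed these triples to \emph{HowToExclude} and \emph{HowToExcludePG}, and whenever a group survives I would pass to a normal subgroup $H\lhd G$ of prime index via Proposition~\ref{prop: intermediate_quotients}, read off the forced signatures of $H$, and iterate. Combined with the abelianisation constraint and the elementary group-theoretic facts recalled there, this shows that none of the skipped cases yields a product-quotient surface with $K^2_S=-1$ and $\chi=1$, so Table~\ref{tab: Ksquare=-1} is complete.

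It then remains to decide, for each family, whether $S$ is of general type and whether it is minimal. For general type I would use the invariant $\xi(X)$ and the criterion \cite[Thm.~5.3 and Cor.~5.4]{BP16}: computing $\xi$ for the baskets $\{1/5,2/5^2,4/5\}$ and $\{1/5,1/2^4,4/5\}$ of the first two rows gives $\xi(X)<\tfrac12$, so these surfaces are not of general type; since $\kappa$ and $p_g,q$ are birational invariants, Enriques--Kodaira together with $K^2_S<0$ then fixes their type. Minimality of the first two families I would obtain by inspecting the exceptional locus of $\rho\colon S\to X$: the Hirzebruch--Jung strings attached to these baskets fall under Proposition~\ref{prop: minimality_lemma} (respectively \cite[Prop.~4.7(3)]{BP12}), so no $(-1)$-curve is produced. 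By contrast the basket $\{1/5^5\}$ of the $\mathbb Z_5^2$-family consists of five $(-5)$-curves, to which neither minimality lemma applies, and one checks directly that $S$ does carry $(-1)$-curves.

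The heart of the argument, and the step I expect to be the main obstacle, is the $\mathbb Z_5^2$ family. Here $C_1$ and $C_2$ are two copies of the same genus-$6$ $\mathbb Z_5^2$-cover of $\mathbb P^1$ of signature $[5,5,5]$ (so $8(g-1)^2/|G|-k(\mathcal B)=8-9=-1$ by Theorem~\ref{thm: invariants_PQ}), hence topologically equivalent; thus by Corollary~\ref{cor: card_set_of_families}$(2)$ the number of families equals $\bigl|\,Q\Aut(\mathbb Z_5^2)_{V,V}/(\Phi\mapsto\Phi^{-1})\,\bigr|$. I would compute $\mathcal B\Aut(\mathbb Z_5^2,V)\le\GL_2(\mathbb F_5)$ explicitly, form the double cosets, and verify that the residual involution $\Phi\mapsto\Phi^{-1}$ does \emph{not} identify the two surviving classes, which yields two irreducible families and is precisely the correction to \cite{BP16}. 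Finally, to identify the minimal model I would contract the $(-1)$-curves found above, tracking $K^2\colon -1\mapsto 1$ while $p_g=q=0$ is preserved, so that the minimal model is a numerical Godeaux surface; its torsion I would read off from the orbifold fundamental group $\pi_1(S)$ attached to the algebraic data, whose torsion part is $\mathbb Z_5$. The delicate points are the explicit description of the $(-1)$-curves on $S$, the computation of $\pi_1(S)^{\mathrm{ab}}$, and the non-identification of classes under $\Phi\mapsto\Phi^{-1}$.
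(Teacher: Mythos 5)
Your pipeline coincides with the paper's: run the scripts on $(K^2,\chi)=(-1,1)$, exclude the secondary output of \textit{ListGroups}$(-1,1)$ by hand, and count families via Theorem \ref{thm: comp_inverse_image} and Corollary \ref{cor: card_set_of_families}; in particular your treatment of the $\mathbb Z_5^2$ case (two double cosets not identified by $\Phi\mapsto\Phi^{-1}$) is exactly how the paper obtains two irreducible families where \cite{BP16} reported one. The gap is in your Kodaira-dimension step. You assert that the baskets $\{1/5,\,2/5^2,\,4/5\}$ and $\{1/5,\,1/2^4,\,4/5\}$ of the first two rows of Table \ref{tab: Ksquare=-1} have $\xi(X)<\tfrac12$ and conclude non-general-type from \cite[Thm.~5.3, Cor.~5.4]{BP16}. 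This is backwards: the paper states that it is the \emph{last} row (basket $1/5^5$, group $\mathbb Z_5^2$) which has $\xi(X)=\tfrac25<\tfrac12$ --- that is precisely why the script of \cite{BP16}, which discards all cases with $\xi<\tfrac12$, missed it --- while the first two rows survived that filter, i.e.\ they have $\xi\geq\tfrac12$.

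Beyond the wrong values, the inference itself cannot be used the way you use it: if ``$\xi(X)<\tfrac12$ implies not of general type'' were applicable as a black box, it would apply equally to the $\mathbb Z_5^2$ row and contradict the very statement you are proving, since a surface whose minimal model is a numerical Godeaux surface is of general type (Kodaira dimension is a birational invariant). The content of the paper's remark is exactly that the $\xi<\tfrac12$ exclusion wrongly eliminated a general-type case. The correct and much simpler argument for the first two rows is the one whose ingredients you already have: establish minimality first (this needs some care --- both baskets contain a $\tfrac15(1,1)$ point, i.e.\ a $(-5)$-curve, so Proposition \ref{prop: minimality_lemma} alone does not apply and must be combined with \cite[Prop.~4.7(3)]{BP12}), and then observe that a minimal surface with $K_S^2=-1<0$ can never be of general type, since minimal surfaces of general type have $K^2\geq 1$. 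With that substitution, your outline for the third row (non-minimality, contraction of two $(-1)$-curves raising $K^2$ from $-1$ to $1$, torsion $\mathbb Z_5$ read off from the abelianized fundamental group) has the right shape; as you note yourself, exhibiting the $(-1)$-curves and computing the torsion are the genuinely hard points, and they are also left implicit in the paper.
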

\end{remark}
%
%
Regarding the efficiency of our script to obtain the results of \cite{G16}, we see that

 \textit{FindSurfaces\_with\_Fixed\_Ksquare\_chi(ExistingSurfaces(ListGroups($K^2$,$\chi$)))} 
 \\
 for $K^2=16$  and $\chi=2$ has a computation time equal to $27306.09$ seconds, whilst the memory usage consists of $825$ megabyte. 
 \vspace{0,3cm}
 
 It remains to discuss the computational complexity of our program for the classification given in Section \ref{sec: class_chi=4} of  the present paper. In particular, we only give here details for the case $\chi=4$ and $K^2=32$:
 \begin{enumerate}
 	\item \textit{ListGroup(32,4)} has a computation time equal to 6075.11 seconds, and a memory usage of 325.78 megabyte;
 	\item The function \textit{ExistingSurfaces} computed on the main output of \textit{ListGroup(32,4)}  has  a computation time equal to 587.860 seconds and a memory usage of 182.66 megabyte;
 	\item The function \textit{FindSurfaces\_with\_Fixed\_Ksquare\_chi} computed on the list produced by \textit{ExistingSurfaces} requires 135494.74 seconds and 49756.03 megabyte.
 \end{enumerate}
\begin{remark}\label{rem: comp_diffi_Find_Surf}
	The computation at (3) is done by excluding from the list produced by \textit{ExistingSurfaces} six triples, which make the computation time too long. 
	
	They are triples $no.9,15,23,35,67,72$ of tables  \ref{table: class_PQ_pg=3} and  \ref{table: class_PQ_pg=3_2} of the appendix of this paper. 
	
	
	We ran \textit{FindSurfaces\_with\_Fixed\_Ksquare\_chi} for each of these triples separately. However, the computer used could not handle the extensive calculations and terminated the program automatically for triples $no.$ 15, 35, and 67. As a result, we are only able to confirm the existence of irreducible families of product-quotient surfaces with compatible algebraic data for these triples, but we cannot compute the exact number of them. Therefore, we marked an interrogative point in the $N$-column for each of these triples. Similar difficulties were encountered for other challenging cases with $K^2$ ranging from $23$ to $30$, where we consistently used the symbol '?' in the $N$-column.
	
	Instead, 
	\begin{itemize}
		\item for $no.$ $9$ the computation time is 341875.6 seconds and it requires 5893.69 megabyte;
		\item for $no.$ $23$ we need 137615.33 seconds and it requires 214.19 megabyte;
		\item for  $no.$ $72$ the computation time is 56131.82 seconds and it requires 214.19 magabyte. 
	\end{itemize}
\end{remark}
Let us give a brief comment on the computational complexity of the hardest steps to prove Theorem \ref{thm: skipped_cases_Ksquare_32} of Section \ref{sec: class_chi=4}.

To attain Proposition \ref{prop: exclude_groups_ord_lesseq_2000} we selected those triples of the secondary output of \textit{ListGroups} having a group order different from 1024, 1536, and less or equal to 2000. We used \textit{HowToExclude} script to exclude such list of triples. The computation time has been 22335.810 seconds and it has required 265,56 megabyte. 

Instead, to exclude those triples having a group order equal to $1536$, that are in total five, we always run separately \textit{HowToExclude} for each of them. The computation time for each of them was approximately of 162000 seconds (45 hours) and the memory usage of 740 megabyte. 

Regarding Proposition \ref{prop: from_19_to_62}, we encountered difficulties to prove that spherical systems of generators of a group of order $1536$ with signature $[4,4,4]$ do not exist. The function \textit{ExSphSystem} has required more or less one week of computation and a memory usage of 1072.44 megabytes. 

\appendix
\section{}

	In this appendix we list all regular product-quotient surfaces of general type with $23\leq K^2\leq 32$ and $p_g=3$. In particular, we list the following information in the columns of tables \ref{table: class_PQ_pg=3} to \ref{table: class_PQ_pg=3_excep_minimality}:
\begin{itemize}
	\item $K^2_S$ is the self-intersection of the canonical class of $S$;
	\item $G$ is the group, and $Id$ is the identifier of the group in the MAGMA database of small groups; hence the pair $\langle d,n\rangle$ of each row denotes that $G$ is the $n$-th group of order $d$ in the MAGMA database of small groups. Whenever $G$ has not an easy description, we simply denote it by $G(d,n)$, the group in the MAGMA database having identifier $\langle d,n\rangle $;
	\item Sing($X$) is the singular locus of the quotient model $X:=\left(C_1\times C_2\right)/G$ defining the product-quotient surface $S$. It is given as a sequence of rational numbers with multiplicities, describing the types of cyclic quotient singularities. For instance, $3/5^4$ means $4$ singular points of type $\frac{1}{5}(1,3)$; 
	\item $t_1$ and $t_2$ are the signatures of the corresponding spherical systems of generators, cf. Definition \ref{defn: ssg};
	\item $N$ is the number of irreducible families. Indeed our tables have 555 lines, but we collect in the same line $N$ families, which share all the other data. 
	We employ the symbol $?$ whenever we are unable to determine the exact number of families in a row due to computational time constraints or machine memory overflow;
	\item deg$(\Phi_S)$ contains, for each family of the row, the degree of the canonical map of a surface $S$ belonging to that family, whenever the computation of the degree can be done. For example, if there are $N$ irreducible families in a row, where $N = 3$, and the degrees listed in the deg$(\Phi_S)$ box for that row are $12$ and $16$, it indicates that the degree of the canonical map has been computed for surfaces from only two of the three families. Specifically, the degree is $12$ for one family and $16$ for the other.
	
	Furthermore, since the degree of the canonical map is not a topological invariant 
	then it may happen that surfaces belonging to the same family have distinct degrees of the canonical map. 
	In this case, we simply list sequentially all degrees of the canonical map of the surfaces belonging to that family. For instance, suppose deg$(\Phi_S$) of a row is $12, (18,16), 18$. This means surfaces of two of these three families have a degree of the canonical map that is constant on the family and equal respectively to $12$ and $18$, while the other family has surfaces with a degree of the canonical map either equal to $18$ or $16$. 
	\\
	The number $0$ means that the image of $\Phi_S$ has dimension $1$. 
\end{itemize}
For the groups occurring in tables \ref{table: class_PQ_pg=3} to  \ref{table: class_PQ_pg=3_excep_minimality} we use the following notation: 
\\
$\mathbb Z_n^k$ is $k$-times the cyclic group of order $n$.
\\
$S_n$ is the symmetric group of $n$ letters.
\\
$\mathcal{A}_n$ is the alternating group.
\\
ASL($n,k$) is the affine special linear group of $\mathbb Z_k^n$.
\\
$\PSL(2,n)$ is the group of $2\times 2$ matrices over $\mathbb Z_n$ with determinant $1$ modulo the subgroup generated by $-\Id$. 
\\
$\text{SO}(3,7)$ is the group of $3\times 3$ orthogonal matrices over $\mathbb F_7$ with determinant $1$. 
\\
He$p$ is the Heisenberg group of order $p^3$:
\[
\text{He}p:=\langle x,y,z | z^{-1}xyx^{-1}y^{-1}, x^p,y^p,z^p, xz=zx, yz=zy\rangle
\]
A $3$-dimensional representation of \text{He}$p$ (over the field $\mathbb Z_p$) is given by sending 
\[
x\mapsto 

	\caption{Remaining product-quotient surfaces of general type with $q=0$, $p_g=3$ and $K^2\in\{23,\dots, 32\}$ whose minimality is not established} \label{table: class_PQ_pg=3_excep_minimality}
\end{table*}

\textcolor{white}{..............}

\textcolor{white}{..............}

\textcolor{white}{..............}

\textcolor{white}{..............}

\textcolor{white}{..............}

\textcolor{white}{..............}

\textcolor{white}{..............}

\textcolor{white}{..............}

\textcolor{white}{..............}

\textcolor{white}{..............}

\textcolor{white}{..............}

\textcolor{white}{..............}

\textcolor{white}{..............}

\textcolor{white}{..............}

\textcolor{white}{..............}

\textcolor{white}{..............}
\begin{acknowledgements}
	The author expresses gratitude to Roberto Pignatelli for engaging discussions on the classification of product-quotient surfaces with low fixed invariants. 
	\\
	The author acknowledges support from the project "Classificazione di superfici di genere basso" in collaboration with the University of Trento.
	\\
	The author is partially supported by the INdAM – GNSAGA Project, “Classification Problems in Algebraic Geometry: Lefschetz Properties and Moduli Spaces” (CUP E55F22000270001).
	
\end{acknowledgements}


\end{document}